\definecolor{dmagenta}{rgb}{.4,.1,.5}
\definecolor{dblue}{rgb}{.0,.0,.5}
\definecolor{mblue}{rgb}{.0,.0,.8}
\definecolor{ddblue}{rgb}{.0,.0,.4}
\definecolor{dred}{rgb}{.6,.0,.0}
\definecolor{dgreen}{rgb}{.0,.5,.0}
\definecolor{Eeom}{rgb}{.0,.0,.5}
\newtheorem{lemma}{Lemma}[section]
\newtheorem{theorem}{Theorem}[section]
\newtheorem{proposition}{Proposition}[section]
\theoremstyle{definition}
\newtheorem{definition}{Definition}[section]
\newtheorem{assumption}{Assumption}[section]
\theoremstyle{remark}
\newtheorem{remark}{Remark}[section]
\numberwithin{equation}{section}
\crefname{section}{Section}{Sections}
\crefname{subsection}{Section}{Sections}
\crefname{condition}{Condition}{Conditions}
\crefname{hypothesis}{Hypothesis}{Conditions}
\crefname{assumption}{Assumption}{Assumptions}
\crefname{lemma}{Lemma}{Lemmas}
\crefname{condition}{Condition}{Conditions}
\Crefname{figure}{Figure}{Figures}
\newcommand{\rc}{{\mathscr{R}}}
\newcommand{\cA}{{\mathscr{A}}}  % not used
\newcommand{\cV}{{\mathcal{V}}}
\newcommand{\fX}{{\mathfrak{X}}}
\newcommand{\fA}{{\mathfrak{A}}}
\newcommand{\eom}{{\mathscr{G}}} % set of ergodic occupation measures
\newcommand{\cI}{{\mathcal{I}}}  % classes of jobs
\newcommand{\cJ}{{\mathcal{J}}}  % pools
\newcommand{\cK}{{\mathcal{K}}}  % partition set for running cost
\newcommand{\Lg}{\mathcal{L}}    % generator
\newcommand{\cL}{{\mathcal{L}}}  % generator
\newcommand{\cP}{{\mathcal{P}}}  % Probability measures
\newcommand{\cQ}{{\mathcal{Q}}}  % not used
\newcommand{\la}{\lambda}
\newcommand{\ra}{\rightarrow}
\newcommand{\beql}[1]{\begin{equation}\label{#1}}
\newcommand{\beq}{\begin{displaymath}}
\newcommand{\eeqno}{\end{displaymath}}
\newcommand{\eeq}{\end{equation}}
\newcommand{\qasq}{\quad\mbox{as}\quad}
\newcommand{\DD}{\mathbb{D}}
\newcommand{\cD}{{\mathcal{D}}}
\newcommand{\RR}{\mathds{R}}
\newcommand{\NN}{\mathds{N}}
\newcommand{\ZZ}{\mathds{Z}}
\newcommand{\Rd}{\mathds{R}^{d}}
\DeclareMathOperator{\Exp}{\mathbb{E}}
\DeclareMathOperator{\Prob}{\mathbb{P}}
\newcommand{\D}{\mathrm{d}}
\newcommand{\E}{\mathrm{e}}
\newcommand{\Act}{{\mathbb{U}}}
\newcommand{\Uadm}{\mathfrak{U}}
\newcommand{\Usm}{\mathfrak{U}_{\mathrm{SM}}}
\newcommand{\Ussm}{\mathfrak{U}_{\mathrm{SSM}}}
\newcommand{\Ind}{\mathds{1}}   % indicator function
\newcommand{\Cc}{\mathcal{C}}   % Continuous Functions
\newcommand{\abs}[1]{\lvert#1\rvert}
\newcommand{\norm}[1]{\lVert#1\rVert}
\newcommand{\babs}[1]{\bigl\lvert#1\bigr\rvert}
\newcommand{\df}{\coloneqq}
\DeclareMathOperator*{\diag}{diag}
\newcommand{\order}{{\mathscr{O}}}
\newcommand{\sorder}{{\mathfrak{o}}}
\newcommand{\grad}{\nabla}
\DeclareRobustCommand\widecheck[1]{{\mathpalette\@widecheck{#1}}}
\def\@widecheck#1#2{%
    \setbox\z@\hbox{\m@th$#1#2$}%
    \setbox\tw@\hbox{\m@th$#1%
       \widehat{%
          \vrule\@width\z@\@height\ht\z@
          \vrule\@height\z@\@width\wd\z@}$}%
    \dp\tw@-\ht\z@
    \@tempdima\ht\z@ \advance\@tempdima2\ht\tw@ \divide\@tempdima\thr@@
    \setbox\tw@\hbox{%
       \raise\@tempdima\hbox{\scalebox{1}[-1]{\lower\@tempdima\box
\tw@}}}%
    {\ooalign{\box\tw@ \cr \box\z@}}}
\newlength{\dhatheight}
\let\oldtocsection=\tocsection
\let\oldtocsubsection=\tocsubsection
\let\oldtocsubsubsection=\tocsubsubsection
\renewcommand{\tocsection}[2]{\hspace{0em}\oldtocsection{#1}{#2}}
\renewcommand{\tocsubsection}[2]{\hspace{1em}\oldtocsubsection{#1}{#2}}
\renewcommand{\tocsubsubsection}[2]{\hspace{2em}\oldtocsubsubsection{#1}{#2}}
\newcommand{\ttl}{\Large Optimal control of Markov-modulated\\[5pt] multiclass many-server queues}
\newcommand{\ttls}{Optimal control of Markov-modulated multiclass many-server queues}
\begin{document}

\title[\ttls]{\ttl}

\author[Ari Arapostathis]{Ari Arapostathis$^\dag$}
\address{$^\dag$ Department of Electrical and Computer Engineering\\
The University of Texas at Austin\\
2501 Speedway, EERC 7.824\\
Austin, TX~~78712, USA}
\email{ari@ece.utexas.edu}

\author[Anirban Das]{Anirban Das$^*$}
\address{$^*$ Department of Mathematics\\
Pennsylvania State University,
University Park, PA 16802}
\email{axd353@psu.edu}

\author[Guodong Pang]{Guodong Pang$^\ddag$}
\author[Yi Zheng]{Yi Zheng$^\ddag$}
\address{$^\ddag$ The Harold and Inge Marcus Department of Industrial and
Manufacturing Engineering,
College of Engineering,
Pennsylvania State University,
University Park, PA 16802}
\email{\{gup3,yxz282\}@psu.edu}

%\date{\today}

\begin{abstract} 
We study multiclass many-server queues for which the arrival, service and abandonment rates are all modulated by a common finite-state Markov process.  We assume that the system operates  in the ``averaged'' Halfin--Whitt regime, which means that it is critically loaded in the average sense,  although not necessarily in each state of the Markov process. 
We show that under any static priority policy, the Markov-modulated diffusion-scaled queueing process is geometrically ergodic. This is accomplished by  employing a solution to an associated Poisson equation in order to construct a suitable Lyapunov function. 
We establish a functional central limit theorem for the diffusion-scaled queueing process and show that the limiting process is a controlled diffusion with piecewise
linear drift and constant covariance matrix. 
We address the infinite-horizon discounted and long-run average (ergodic) optimal control problems and establish asymptotic optimality. 
\end{abstract}

\keywords{Markov-modulation, multiclass many-server queues, `averaged' Halfin-Whitt regime, geometric ergodicity, optimal control, discounted cost, ergodic cost,  asymptotic optimality}

\maketitle

%\hrule
%%\vspace{-0.3cm}
%\tableofcontents
%\vspace{-0.8cm}
%\hrule

%\singlespacing

\allowdisplaybreaks

\section{Introduction} %\label{sec-intro}
Queueing networks operating in a random environment have been studied extensively. 
 A functional central limit theorem (FCLT) for  Markov-modulated
infinite-server queues is established in \cite{ABMT}, which shows that the limit process is 
an Ornstein--Uhlenbeck diffusion; 
see also \cite{PZ17,JMTW17} for more recent work.
Scheduling control problems for  Markov-modulated multiclass single-server queueing networks have been addressed in \cite{LQA17,RMH13,BGL14}. In \cite{BGL14}, the authors show that a modified $c\mu$-policy is asymptotically optimal for the infinite horizon discounted problem.
For a single-server queue with only the arrival rates modulated, service rate control problems over a finite and infinite horizon have been studied in \cite{RMH13,LQA17}. 
For multiclass many-server queues without modulation, the infinite-horizon discounted and ergodic control problems have been studied in \cite{AMR04} and \cite{ABP15}, respectively. 

In this paper we address the aforementioned
control problems for Markov-modulated multiclass many-server queues.
We establish the weak convergence of the diffusion-scaled queueing processes, study their stability properties, characterize the optimal solutions via the associated
limiting diffusion control problems, and then prove asymptotic optimality.
Specifically, we assume that the arrival, service and abandonment rates are all modulated by a finite-state Markov process, and that given the state of this process, the arrivals are Poisson, and the service and patient times are exponentially distributed.
The system operates in the ``averaged'' Halfin--Whitt (H--W) regime, namely, it is critically loaded in an average sense, but it may be underloaded or overloaded for a given state of the environment. 
This situation is different from the
standard H--W regime for many-server queues, which requires that the system is critically loaded as the arrival rates and number of servers get large;
see, e.g., \cite{HW81,AMR04,JMTW17,ABP15}.

We first establish a FCLT for the Markov-modulated diffusion-scaled   queueing processes under any admissible scheduling policy (only considering work-conserving and preemptive policies). 
Proper scaling is needed in order to establish  weak convergence of the queueing processes. 
In particular, since the arrival processes are of order $n$, and the switching rates of the background process are assumed to be of order $n^{\alpha}$ for $\alpha>0$, the queueing processes are centered at the `averaged' steady state, which is of order $n$,
and are then scaled down by a factor of an $n^\beta$, with
$\beta \df \max\{\nicefrac{1}{2}, 1-\nicefrac{\alpha}{2}\}$, in the diffusion scale. Thus, when $\alpha\ge 1$,  we have the usual diffusion scaling with
$\beta = \nicefrac{1}{2}$, which is due to the fact that the very fast switching of the environment results in an `averaging' effect for the arrival, service and abandonment processes of the queueing dynamics. 
The limit queueing process is a piecewise Ornstein--Uhlenbeck diffusion process
with a drift and covariance given by the corresponding `averaged' quantities under the stationary distribution of the background process.
When $\alpha=1$, both the variabilities of the queueing and background processes are captured in the covariance matrix, while when $\alpha>1$, only the variabilities of the queueing process is captured. On the other hand,   when $\alpha<1$, the proper diffusion scaling requires $\beta=1-\nicefrac{\alpha}{2}$, for which we obtain a similar  piecewise
Ornstein--Uhlenbeck diffusion process with the covariance matrix capturing the variabilities of the background process only.

The ergodic properties of this class of piecewise linear diffusions (and L\'evy-driven stochastic differential equations) have been studied in \cite{Dieker-Gao,APS},
and these results can be applied directly to our model.
The study of the ergodic properties of the diffusion-scaled processes, however, is challenging.
Ergodicity of switching Markov processes has been an active research subject. 
For switching diffusions, stability has been studied in \cite{Kha12,Mao99,Kha07}. 
However, studies of ergodicity of switching Markov processes are scarce. 
Recently in \cite{BH12, BBMZ15}, some kind of hypoellipticity criterion with
H{\"o}rmander-like bracket conditions is provided to establish geometric
convergence in the total variation distance. 
As pointed out in \cite{Hairer15}, this condition cannot be easily
verified, even for many classes of simple Markov processes with random switching. 
Cloez and Hairer \cite{Hairer15} provided a concrete criterion for exponential ergodicity in situations which do not verify any hypoellipticity assumption (as well as criterion for convergence in terms of Wasserstein distance).
Their proof is based on a coupling argument and a weak form of Harris' theorem. 
It is worth noting that in these studies, the transition rates of the underlying Markov process are unscaled, and therefore, the Markov processes under random switching do not exhibit an `averaging' effect. 
Because of the `averaging' effect in our model, we are able to construct a suitable Lyapunov function to verify the standard Foster-Lyapunov condition in order to prove the geometric exponential ergodicity of the diffusion-scaled queueing processes. 

The technique we employ is much similar in spirit to the approach in \cite{Kha12}
for studying $p$-stability of the switching diffusion processes with rapid switching. 
For diffusions, Khasminskii \cite{Kha12} observes that rapid switching results in some `averaging' effect, and thus if the `averaged' diffusion (modulated parameters are replaced by their averages under the invariant measure of the background process) is stable, then a Lyapunov function can be constructed by using solutions to an associated Poisson equation to verify the Foster-Lyapunov stability condition for the original diffusion process.  
To the best of our knowledge, this approach has not been used to study general fast switching Markov processes.
We employ this technique to the Markov-modulated diffusion-scaled queueing process of the multiclass many-server model.
Ergodicity properties for multiclass Markovian queues have been established in \cite{GS12}; in particular, it is shown that the queueing process is ergodic under any working conserving scheduling policy.
Following the approach in \cite{ABP15}, we show that under a static priority
scheduling policy, the `averaged' diffusion-scaled processes (with the arrival, service and abandonment parameters being replaced by the averaged quantities) are geometrically ergodic (Lemma \ref{L3.1}).  We then construct a Lyapunov function using a Poisson equation associated with the difference of the Markov-modulated diffusion-scaled queueing process and the `averaged' queueing process, and thus verify the Foster-Lyapunov stability criterion for geometric ergodicity.

To study asymptotic optimality for  the discounted problem, we first establish a moment bound for the Markov-modulated diffusion-scaled queueing process, which is uniform under all admissible policies, i.e., work-conserving and non-preemptive polices. 
We then adopt the approach in \cite{AMR04} and construct a sequence of polices which asymptotically converges to the optimal value of the discounted problem for the limiting diffusion process. 
To prove asymptotic optimality for the ergodic problem, it is critical to study  the convergence of the mean empirical measures associated with the Markov-modulated diffusion-scaled queueing processes. 
Unlike the studies in  \cite{ABP15,AP16,AP18}, the Markov modulation makes this work much more challenging. 
For both the lower and upper bounds, we construct an auxiliary (semimartingale) process associated with a diffusion-scaled queueing process and the underlying Markov process.
We then establish the convergence of the mean empirical measure of the auxiliary process, and thus prove that of the Markov-modulated diffusion-scaled queueing processes by establishing their asymptotic equivalence.  
In establishing the upper bound, we adopt the technique developed in \cite{ABP15}.
Using a spatial truncation, we obtain nearly optimal controls for the ergodic problem of our controlled limiting diffusion by fixing a stable Markov control (any constant control) outside a compact set. 
We then map such concatenated controls for the limiting diffusion process to a family of scheduling polices for the auxiliary processes as well as the diffusion-scaled  queueing processes, which also preserve the ergodicity properties.  With these concatenated policies, we are able to prove the upper bound for the value functions.

\subsection{Organization of the paper}
In the next subsection, we summarize the notation used in this paper. 
\cref{S2} contains a detailed description of the Markov-modulated multiclass many-server queueing model.
In \cref{S2.1}, we introduce the scheduling policies considered in this paper.
In \cref{S2.2}, we present the controlled limiting diffusions and the results of weak convergence. %whose proofs are given in \cref{S5}.
We state the main results on asymptotic optimality for the discounted and ergodic
problems in \cref{S2.3,S2.4}, respectively. 
In \cref{S3}, we summarize the ergodic properties of the controlled limiting diffusions, and establish the geometric ergodicity of the diffusion-scaled processes.
A characterization of optimal controls for the controlled limiting diffusions, and the proofs of asymptotic optimality are given in \cref{S4}.  
\cref{S5} is devoted to the proofs of \cref{T2.1,L3.1}, while \cref{S6} contain the proofs of some technical results in \cref{S4}.
%\bl{The proof of geometric ergodicity of the ``average'' process is also given in \cref{S5}, 
%and the proofs of \cref{LL4.1,semi-martingale,L4.1} are given in \cref{S6}.}
%\ftn{AA: rephrase this}

\subsection{Notation}
We let $\NN$ denote the set of positive integers.
For $k\in\NN$, $\RR^k$ $(\RR^k_+)$ denotes the set of $k$-dimensional real (nonnegative) vectors, and we write $\RR$ $(\RR_+)$ for $k=1$.
For $k\in\NN$, $\ZZ^k_+$ stands for the set of $d$-dimensional nonnegative integer vectors.
For $i= 1,\dots,d$, we let $e_i$ denote the vector in $R^d$ with the $i^{\rm th}$ element equal to $1$ and all
other elements equal to $0$, and define $e = (1,\dots,1)^{\textup{T}}$.
The complement of a set $A\subset\RR^d$ is denoted by $A^c$.
The open ball in $\RR^d$ with center the origin and radius $R$ is denoted by $B_R$.
For $a,b\in\RR$, the minimum (maximum) of $a$ and $b$ is denoted by $a\wedge b$ $(a\vee b)$,  and we let $a^+ \df a\vee0$. 
For $a\in\RR^+$, $\lfloor a \rfloor$ denotes the largest integer not greater than $a$.
Given any vectors $a,b\in\RR^d$, let $\langle a,b \rangle$ denote the inner product.

The Euclidean norm in $\RR^k$ is denoted by $\abs{\,\cdot\,}$.
For $x\in\RR^k$, we let $\norm{x}\df \sum^{k}_{i=1}\abs{x_i}$.  
We use $\Ind$ to denote the indicator function.
We use the notations $\partial_i \df \frac{\partial}{\partial x_i}$ 
and $\partial_{ij} \df \frac{\partial^2}{\partial x_i \partial x_j}$\,.
For a domain $D\subset\RR^d$, the space $\Cc^k(D)$ ($\Cc^{\infty}(D)$) denotes 
the class of functions whose partial derivatives up to order $k$ (of any order) exist and
are continuous, and $\Cc^k_c(D)$ denotes the space of functions in $\Cc^k(D)$ with compact support.
For $D\subset\RR^d$, we let $\Cc^k_b(D)$ denote the set of functions in $\Cc^k(D)$, whose partial derivatives up to order $k$ are continuous and bounded. 
For a nonnegative function $f\in\Cc(\RR^d)$, we use $\order(f)$ to denote the space of function
$g\in\Cc(\RR^d)$ such that
$\sup_{x\in\RR^d} \frac{\abs{g(x)}}{1 + f(x)} \;<\; \infty\,,
$
and we use $\sorder(f)$ to denote the subspace of $\order(f)$ consisting of functions $g\in\Cc(\RR^d)$ such that
  $
  \limsup_{\abs{x}\rightarrow\infty} \frac{\abs{f(x)}}{1 + g(x)} \;=\; 0\,.
  $
The arrows $\rightarrow$ and $\Rightarrow$ are used to denote convergence of real numbers and  convergence in distribution, respectively. 
%The abbreviation a.s. means almost surely, i.i.d. means independent and identically distributed and R.H.S. %means right hand side.\ftn{AA: these are standard, no need to define.
%How about lower case for r.h.s.?}
For any path $X(\cdot)$, $\Delta X(t)$ is used to denote the jump at time $t$.
We use $\langle \cdot \rangle$ to denote the predictable quadratic variation of a square integrable martingale, and use $[\cdot]$ to denote the optional quadratic variation.
We define $\DD \df \DD(\RR_+, \RR)$ as the real-valued function space of all c\'adl\'ag functions on $\RR_+$. We endow the space $\DD$ with the Skorohod $J_1$ topology and
denote this topological space as $(\DD,\cJ)$.
For any complete and separable metric spaces $S_1$ and $S_2$,
we use $S_1\times S_2$ to denote their product space endowed with the maximum metric.
For any complete and separable space $S$, and $k\in\NN$, the $k$-fold product space with
the maximum metric is denoted by $S^k$. 
For $k\in\NN$, $(\DD^k,\cJ)$ denotes the $k$-fold product of $(\DD,\cJ)$ with the product topology. Given a Polish space $E$, $\cP(E)$ denotes the space of probability measures on $E$, endowed with the Prokhorov metric.

\section{The Model and Control Problems}\label{S2}

We consider a sequence of $d$-class Markov-modulated $M/M/n + M$ 
queueing models indexed by $n$.
Define the space of customer classes by $\cI \df \{1,\dots, d\}$.
For $n\in\NN$, let $J^n \df \{J^n(t)\colon t\ge 0\}$ be a continuous-time Markov chain 
with finite state space $\cK \df \{1,\dots,K\}$, with
an irreducible transition rate matrix $n^{\alpha}\cQ$ for some $\alpha>0$.
Thus, $J^n$ has a stationary distribution denoted by 
$\pi = (\pi_1,\cdots,\pi_K)$, for each $n\in\NN$.
We assume that $J^n$ starts from this stationary distribution.
%\ftn{AA: revisit this. \bl{YZ: This condition is used to get the limits in \cref{E2.7,E2.8}. 
%To get results in \cite{ABMT}, the authors also assume $J(\cdot)$ is stationary at time $0$ in section 2.}}

For each $n$ and $i\in\cI$, 
let $A^n_i \df \{A^n_i(t)\colon t\ge0 \}$ denote the arrival process
of  class-$i$ customers in the $n^{\rm th}$ system.
Provided $J^n$ is in state $k$, the arrival rate of class--$i$ customers 
	is defined by $\lambda^n_i(k)\in\RR_+$, and the service time 
	and the patience time are exponentially distributed with rates 
	$\mu^n_i(k)$ and $\gamma^n_i(k)$, respectively. 
Let $A^n$ denote a Markov-modulated
Poisson process, that is, for $t\ge0$, each $n$ and $i\in\cI$, 
\begin{equation*}
A^n_i(t) \;=\; A_{*,i}\left(\int_{0}^{t}\lambda^n_i(J^n(s))\,\D{s}\right)\,,
\end{equation*}
where $A_{*,i}$'s are mutually independent unit-rate Poisson processes.
%\ftn{AA: then why $A^{n}_{*,i}$? this does not depend on $n$;
%write it as $A_{*,i}$ or $A^*_i$.}
%and $\lambda^n_i(J^n(s))$ denotes
%the arrival rate of  class-$i$ customers depending on the background process $J^n$.
%In the $n^{\rm th}$ system, and for each class of customers, 
%we assume that,
%given the background process $J^n$, the service time 
%and the patient time are exponentially distributed with rates 
%the service rate $\mu^n_i(J^n(\cdot))$ and the abandon rate $\gamma^n_i(J^n(\cdot))$ 
%are defined analogously. 
In each class, we also assume that the arrival, service and abandonment processes are 
mutually independent. 
%Given $J^n$ in state $k$, we write
%$\lambda_i^n(J^n(\cdot)) = \lambda_i^n(k) \in \RR_{+}$, 
%and $\mu^n_i(k), \gamma^n_i(k)$ are defined analogously.
%\ftn{AA: the original data is $\lambda_i^n(k)$ and this well defines $\lambda^n_i(J^n(s))$.}

Let $X^n$, $Q^n$ and $Z^n$ denote the $d$-dimensional processes counting the number of customers of each class in the $n^{\rm th}$ system, in queue and in service, respectively,
and the following constraints are satisfied: for $t\ge0$ and $i\in\cI$,
\begin{equation}\label{con-XQZ}
\begin{aligned}
&X_i^n(t) \;=\; Q_i^n(t) + Z_i^n(t) \,, \\
&Q_i^n(t) \;\ge\; 0\,, \quad Z_i^n(t) 
\;\ge\; 0 \quad \text{and} \quad \langle{e},Z^n(t)\rangle \;\le\; n\,.
\end{aligned}
\end{equation}     
Then, we have the following dynamic equation: for $t\ge0$, $n\in\NN$ and $i\in\cI$,
\begin{equation}\label{E2.2}
X^n_i(t) \;=\; X_i^{n}(0) + A^n_i(t) - S_i^n(t) - R_i^n(t)  \,, 
\end{equation} 
where 
$$S_i^n(t)\;\df\;S_{*,i}\left(\int_{0}^{t}\mu_i^n(J^n(s))Z_i^n(s)\,\D{s}\right)\,,
\quad R_i^n(t)\;\df\; R_{*,i}\left(\int_{0}^{t}\gamma_i^n(J^n(s))Q_i^n(s)\,\D{s}\right)\,,$$
and $\{S_{*,i}, R_{*,i}\colon i\in\cI \}$ 
are mutually independent unit-rate Poisson processes.
%\ftn{AA: same issue,
%drop the dependence on $n$.}

\begin{assumption}\label{A2.1}
As $n\rightarrow\infty$, for $i\in\cI$ and $k\in\cK$, 
\begin{align*}
\begin{aligned}
n^{-1}\lambda_i^n(k) \;\rightarrow\; \lambda_i(k)\;>\;0\,, 
\quad \mu_i^n(k) \;\rightarrow\;\mu_i(k) \;>\;0 \,,
\quad \gamma^n_i(k) \;\rightarrow \gamma_i(k) \;>\; 0\,, \\
n^{-\beta}\left(\lambda^n_i(k)- n\lambda_i(k)\right)
\;\rightarrow\; \Hat{\lambda}_i(k)\quad 
and \quad  n^{1-\beta}(\mu^n_i(k) - \mu_i(k))\;\rightarrow\; \Hat{\mu}_i(k)\,,
\end{aligned}
\end{align*}
where 
$$\beta \;\df\; \max\{\nicefrac{1}{2},1-\nicefrac{\alpha}{2}\}\,.$$
\end{assumption}
For $i\in\cI$ and $n\in \NN$, we define
\begin{align*}
& \lambda^{\pi}_i \;\df\;  \sum_{k\in\cK} \pi_k\lambda_i(k)\, \quad
\mu^{\pi}_i \;\df\; \sum_{k\in\cK} \pi_k\mu_i(k)\,,\quad
\gamma^{\pi}_i \;\df\; \sum_{k\in\cK} \pi_k\gamma_i(k)\,, \\
&\Bar{\lambda}^{n}_i \;\df\;  \sum_{k\in\cK} \pi_k\lambda^n_i(k)\, \quad
\Bar{\mu}^{n}_i \;\df\; \sum_{k\in\cK} \pi_k\mu^n_i(k)\,,\quad
\Bar{\gamma}^{n}_i \;\df\; \sum_{k\in\cK} \pi_k\gamma^n_i(k)\,,
\end{align*} 
and 
$$\rho_i \;\df\; \nicefrac{\lambda^{\pi}_i}{\mu^{\pi}_i}, \quad \rho^{n} \df n^{-1}\sum_{i\in\cI}\nicefrac{\Bar{\lambda}^{n}_i}{\Bar{\mu}^{n}_i}\,. $$

\begin{assumption}\label{A2.2}
The system is critically loaded, that is, $\sum_{i\in\cI} \rho_i = 1$.
\end{assumption}
 
Under \cref{A2.1,A2.2}, we have 
\begin{equation*} %\label{eqn-aHW}
 n^{1-\beta}(1 - \rho^{n}) 
  \;=\; \sum_{i\in\cI}
 \frac{ n^{-\beta}(n\Bar{\mu}^{n}_i - n\mu_i^\pi)\rho_i 
 	- n^{-\beta}(\Bar{\lambda}^{n}_i - n\lambda^{\pi}_i)}{\Bar{\mu}^{n}_i} 
\,\xrightarrow[n\to\infty]{}\, \sum_{i\in\cI} \frac{\rho_i\Hat{\mu}^{\pi}_i - \Hat{\lambda}^{\pi}_i}{\mu^{\pi}_i}\,,
\end{equation*}
with 
$$\Hat{\lambda}^{\pi}_i \;\df\; \sum_{k\in\cK}\pi_k\Hat{\lambda}_i(k), \quad \Hat{\mu}^{\pi}_i \;\df\; \sum_{k\in\cK}\pi_k\Hat{\mu}_i(k)\,.
 $$ 
 
\cref{A2.1,A2.2} are in effect throughout the paper, without further mention.
A model satisfying these assumptions is said to be in the ``averaged''  H--W regime.

%We refer to this set of conditions in Assumptions \ref{A2.1} and \ref{A2.2} on the system parameters and the convergence in \eqref{eqn-aHW} as the ``averaged''  H--W regime. 

Let $\Bar{X}^n$, $\Bar{Z}^n$,  $\Bar{Q}^n$, 
$\Hat{X}^n$, $\Hat{Z}^n$ and $\Hat{Q}^n$ denote the $d$-dimensional processes satisfying
\begin{align*}
\Bar{X}^n_i &\;=\; n^{-1}X^n_i\,, \quad \Bar{Z}^n_i \;=\; n^{-1}Z^n_i\,,\quad
 \Bar{Q}^n_i\;=\; n^{-1}Q^n_i\,,
\\
\Hat{X}^n_i &\;=\; n^{-\beta}(X^n_i - \rho_in)\,,\quad  
\Hat{Z}^n_i \;=\; n^{-\beta}(Z^n_i - \rho_in)
\quad \text{and} \quad \Hat{Q}^n_i  \;=\; n^{-\beta}Q^n_i
\end{align*}
for $i\in\cI$. Then, for $t\ge0$ and $i\in\cI$, $\Hat{X}_i^n(t)$ can be written as  
\begin{align}\label{HatX}
\begin{aligned}
\Hat{X}_i^n(t) &\;=\; \Hat{X}_i^n(0) + \Hat{\ell}_i^n(t)  + \Hat{L}_i^n(t)+ \Hat{A}_i^n(t) - \Hat{S}_i^n(t)
- \Hat{R}_i^n(t) \\
&\qquad\qquad\qquad - \int_{0}^{t}\mu_i^n(J^n(s))\Hat{Z}^n_i(s)\,\D{s}
- \int_{0}^{t}\gamma_i^n(J^n(s))\Hat{Q}^n_i(s)\,\D{s}\,,
\end{aligned}
\end{align}
where
\begin{align*}%\label{defn_of_ell_L_etc}
\begin{aligned}
\Hat{\ell}_i^n(t) &\;\df\; n^{-\beta}\sum_{k\in\cK}\Big(\big(\la^n_i(k)-n\la_i(k)\big) 
 - n\rho_i\big(\mu^n_i(k) - \mu_i(k)\big)\Big)
 \int_{0}^{t}\Ind(J^n(s)=k)\,\D{s}\,,\\
\Hat{L}^n_i(t) &\;\df\;  n^{1-\beta}\int_{0}^{t}(\lambda_i(J^n(s))-\lambda_i^\pi)\,\D{s}  
- n^{1-\beta}\rho_i\int_{0}^{t}(\mu_i(J^n(s))-\mu_i^\pi)\,\D{s} \,, \\
\Hat{A}^n_i(t) &\;\df\; 
n^{-\beta}\left(A^n(t) - \int_{0}^t\lambda_i^n(J^n(s))\,\D{s} \right)\,, \\
\Hat{S}_i^n(t) &\;\df\; 
n^{-\beta}\left(S_i^n(t) - \int_0^{t}\mu_i^n(J^n(s))Z^n_i(s)\,\D{s}\right)\,,\\
\Hat{R}_i^n(t) &\;\df\; 
n^{-\beta}\left(R_i^n(t) - \int_0^{t}\gamma_i^n(J^n(s))Q^n_i(s)\,\D{s}\right)\,. 
\end{aligned}
\end{align*}

Define the random processes $\Hat{Y}^n = (\Hat{Y}^n_1,\dots,\Hat{Y}^n_d)'$ 
and $\Hat{W}^n =(\Hat{W}^n_1,\dots,\Hat{W}^n_d)'$ by 
\begin{equation*}
\Hat{Y}^n_i(t) \;\df\; \Hat{\ell}^n_i(t) - \int_{0}^{t}\mu_i^n(J^n(s))\Hat{Z}^n_i(s)\,\D{s}
- \int_{0}^{t}\gamma_i^n(J^n(s))\Hat{Q}^n_i(s)\,\D{s}
\end{equation*}
for $i \in \cI$ and $t\ge 0$, and 
\begin{equation*}%\label{def_of_Hat_W_to_n}
\Hat{W}^n \;\df\; \Hat{L}^n + \Hat{A}^n  - \Hat{S}^n - \Hat{R}^n\,, 
\end{equation*}
respectively. 
Then, \cref{HatX} can be written as
\begin{equation*}%\label{E2.9}
\Hat{X}^n(t) \;=\; \Hat{X}^n(0) + \Hat{Y}^n(t) + \Hat{W}^n(t)\,, \quad t \ge 0\,. 
\end{equation*}
Throughout the paper, 
we assume that $\{\Hat{X}^n(0)\colon n\in\NN\}$ are deterministic. 
%{\color{red} leave it as is, no need to change here}
%{\color{magenta} We assume $X^{n}(0)$ converges a.s, or something stronger like like $X^{n}(0)$ does not depend on $n$? I would put this within assumption wrapper, and not mention it in theorem statements like \cref{T2.1}-\textup{iv}}
%{\color{dgreen} AA: I am fine with this.  But note though that
%if we assume that $X^{n}(0)$ has a sufficiently high enough moment which is
%uniformly bounded in $n$, and converges a.s. to a constant then all the results
%should follow.  But this is a minor generalization.}

\subsection{Scheduling policies}\label{S2.1}

%\sout{We first give the definition of admissible scheduling policy.}
Let $\tau^n(t) \df \inf\{r\ge t\colon J^n(r) = 1 \}$ for $t\ge 0$.
We define the following filtrations: for $t \ge 0, r \ge 0,$
\begin{align*}
\mathcal{F}_t^n &\;\df\; \sigma\{A^n_i(s),S^n_i(s),R^n_i(s), Q_i^n(s), Z_i^n(s), X_i^n(s), J^n(s)  \colon i\in\cI,s\le t  \}\vee\mathcal{N} \,, \\
\mathcal{G}_{t,r}^n &\;\df\;  \sigma\{A^n_i(\tau^n(t) + r) - A^n_i(\tau^n(t)),S^n_i(\tau^n(t) + r) - S^n_i(\tau^n(t)), \\
 &\quad\quad\quad\quad\quad\quad  R^n_i(\tau^n(t) + r) - R^n_i(\tau^n(t)),\Hat{L}^n_i(\tau^n(t) + r) - \Hat{L}^n_i(\tau^n(t))    \colon i\in\cI  \}\vee\mathcal{N}\,,
\end{align*}
where $\mathcal{N}$ is a collection of $\Prob$-null sets.

\begin{definition}\label{D2.1}
We say a scheduling policy $Z^n$ is admissible, 
if it satisfies following conditions.
\begin{itemize}
\item[(\textup{i})] Preemptive: a server can stop serving a class of customer to serve some other class of customers at any time, and resume the original service at a later time.
\item[(\textup{ii})] Work-conserving: for each $t\ge 0$, $\langle e,Z^n(t)\rangle = \langle e,X^n(t)\rangle\wedge n$.
\item[\textup{(iii)}] Non-anticipative: for $t\ge0$ and $r\ge0$,
\begin{itemize}
	\item[(a)] $Z^n(t)$ is adapted to $\mathcal{F}^n_t$.
	\item[(b)] $\mathcal{F}_t^n$ and $\mathcal{G}_{t,r}^n$ are independent.
\end{itemize}

\end{itemize}
\end{definition}

We only consider  admissible scheduling policies. 
Given an admissible scheduling policy $Z^n$, the process $X^n$ 
in \eqref{E2.2} is well defined, and we say that it is governed by the scheduling policy $Z^n$.  
Abusing the terminology we equivalently also say that
$\Hat{X}^n$ is governed by the scheduling policy 
%\sout{$Z^n$, or that $\Hat{X}^n$ is governed by the scheduling policy}
$\Hat{Z}^n$. 
%\sout{We also use the phrase ``under the scheduling policy'' in the same sense as the term  ``governed by  %the scheduling policy''.}\ftn{AA: this
%is superfluous.}

	Define the set 
	\begin{equation*}
	\Act \;\df\; \{u\in\RR^d_{+}: \langle e,u \rangle = 1\}\,,
	\end{equation*}
	It is often useful to re-parametrize and  replace the scheduling policy $Z^n$ with a new scheduling policy $\Hat{U}^n$ defined as follows.
	Given a process $X^n$ defined using \eqref{E2.2} and an admissible scheduling policy $Z^n$, for $t\ge0$,
	define
	\[\Hat{U}^n(t) \;\df\;
	\begin{cases} 
	\frac{Z^n(t) - X^n(t)}{n-\langle e,X^n(t) \rangle} \quad &\text{for}\;  \langle e,X^n(t) \rangle>n\,,\\
	e_d  \quad &\text{for}\;   \langle e,X^n(t) \rangle \le n\,.
	\end{cases}
	\]
	$\Hat{U}^n(t)$ is a $\Act$--valued process, representing the  proportion of class-$i$ customers in the queue.  Any process $\Hat{U}^n$ defined as above by using some admissible scheduling policy $Z^n$ is called an admissible proportions-scheduling policy. 
	The set of all such admissible proportions-scheduling polices $\Hat{U}^n$ 
	is denoted by $\widehat{\Uadm}^n$. Abusing terminology we replace the term admissible  proportions-scheduling policy with admissible scheduling policy.

	%Define 
	%the action set $\cU(x,z)$ as
	%$$ \cU(x,z) \;\df\;  \left\{ u\in\Act \colon 
	%u = \frac{z - x}{\langle e,x \rangle},\;\text{for}\;\langle e,x \rangle>0;\;  
	%u = e_d,\,\text{otherwise} \right\}\,,$$
	%where $x, z$ are state values.
	%Let $\Hat{U}^n(t)$ denote a stochastic process 
	%taking values in $\cU(\Hat{X}^n(t),\Hat{Z}^n(t))$. 
	%{\color{magenta} This (by the use of the) means there is one unique $\cU(\Hat{X}^n(t),\Hat{Z}^n(t))$ valued %process and you denote it by $\Hat{U}^n(t)$. If you use \enquote{a} instead of \enquote{the} it means %$\Hat{U}^n(t)$ is any $\cU(\Hat{X}^n(t),\Hat{Z}^n(t))$ valued process}
	%Then, for each $i$, when the total queue size is positive, 
	%$\Hat{U}^n_i$ represents the proportion of class-$i$ customers in the queue. 
	%Note that $\Hat{U}^n$ is an equivalent parameterization of $\Hat{Z}^n$. 
	%It is convenient to treat $\Hat{U}^n$ as the scheduling policy,
	%and the admissible $\Hat{U}^n$ can be defined analogously. 
	%The set of all admissible scheduling polices $\Hat{U}^n$ 
	%is denoted by $\widehat{\Uadm}^n$.{\color{magenta}]\	
	%We say $(\Hat{X}^n,\Hat{U}^n)$ is an admissible pair, 
	%if $\Hat{U}^n$ is an $\Act$-valued process, and $(\Hat{X}^n,\Hat{Q}^n, \Hat{Z}^n)$
	%satisfy the  constraints of \cref{D2.1}, where 
	%\begin{align}\label{rep-control}
	%\begin{aligned}
	%\Hat{Q}^n(t) \;=\; \langle e,\Hat{X}^n(t) \rangle^{+}\Hat{U}^n(t)\,, \quad 
	%\Hat{Z}^n(t) \;=\; \Hat{X}^n(t) - \langle e,\Hat{X}^n(t) \rangle^{+}\Hat{U}^n(t).
	%\end{aligned}
	%\end{align}	 

\subsection{The limiting controlled diffusion}\label{S2.2}

By the equation (4) in \cite{ABMT} and \cref{A2.1}, we have
\begin{equation}\label{E2.7}
\Hat{\ell}^n(t) \;\rightarrow\; \ell t \quad \text{a.s.} \quad \text{as} \quad n \ra\infty, 
\end{equation}
where $\Hat{\ell}^n \df (\Hat{\ell}_1,\dots,\Hat{\ell}_d)'$, $\ell \df (\ell_1,\dots,\ell_d)'$ and 
$\ell_i \df \Hat{\lambda}_i^\pi - \rho_i \Hat{\mu}_i^\pi$.
Let $\pi$ be the stationary distribution of $J^n$, that is, $\pi' \cQ =0$ and $\pi'e=1$. (Note that scaling $\cQ$ does not change the stationary distribution.)
By Proposition 3.2 in \cite{ABMT}, we obtain
\begin{equation}\label{E2.8}
\Hat{L}^n \;\Rightarrow\; 
 \sigma^L_\alpha \widetilde{W} \quad \text{in} \quad (\DD^d, \cJ)\,, \quad \text{as} \quad n\rightarrow\infty
 \,,
\end{equation}
where $\Hat{L}^n\df(\Hat{L}^n_1,\dots,\Hat{L}^n_d)'$,
$\widetilde{W}$ is a zero-drift standard $d$-dimensional Wiener process, 
and if $\alpha>1$, then $\sigma^L_\alpha =0$, while if $\alpha \le 1$,
then
$\sigma^L_\alpha$ satisfies
$(\sigma^L_\alpha)'\sigma^L_\alpha =  \Theta = [\theta_{ij}]$, with 
\begin{equation*}%\label{SecondMgterm}
\theta_{ij}\;\df\; 2\sum_{l\in\cK}\sum_{k\in\cK} \big(\la_i(k) - \rho_i \mu_i(k)\big) \big(\la_j(l) - \rho_j \mu_j(l)\big)\pi_k\Upsilon_{kl} 
\end{equation*}
for $i,j\in\cI$, and $\Upsilon_{kl}\df\int_{0}^{+\infty}\left(P_{kl}(t)-\pi_k\right)\D{t}$ 
with $P_{kl}(t) = [\E^{\cQ t}]_{kl}$, 
that is, 
$\Upsilon = (\Pi - \cQ)^{-1} - \Pi$.
Here, $\Pi$ denotes the matrix whose rows are equal to the vector $\pi$. 

The proof of the following result is in \cref{S5}.

\begin{theorem} \label{T2.1}
Under \cref{A2.1,A2.2}, and assuming that $\Hat{X}^n(0)$ is uniformly bounded, the following results hold.
\begin{itemize}
\item[(\textup{i})]
As $n\rightarrow \infty$,
\begin{equation*}
(\Bar{Z}^n, \Bar{Q}^n) \;\Rightarrow\; (\rho,0) 
\quad \text{in} \quad (\DD^d,\cJ)^2\,,
\end{equation*}
where $\rho = (\rho_1,\dots,\rho_d)$.	
		
\item[(\textup{ii})]
As $n\rightarrow\infty$,
\begin{equation*}
\Hat{W}^n \;\Rightarrow\; \Hat{W} \quad \text{in} \quad (\DD^d, \cJ)\,,
\end{equation*}
where $\Hat{W}$ is a $d$-dimensional Brownian motion with a covariance coefficient matrix $\sigma'_{\alpha}\sigma_\alpha$ defined by 
$$
\sigma'_\alpha \sigma_\alpha = \begin{cases}
\Lambda^2, & \alpha >1, \\
\Lambda^2 + \Theta, & \alpha=1, \\
\Theta, & \alpha<1,
\end{cases} 
$$
and  $\Lambda\df \diag(\sqrt{2\lambda_1^{\pi}},\dots,\sqrt{2\lambda_d^{\pi}})$.

\item[(\textup{iii})]
$(\Hat{X}^n, \Hat{W}^n, \Hat{Y}^n)$ is tight in $(D^d,\cJ)^3$.

\item[(\textup{iv})] 
Provided that $\Hat{U}^n$ is tight, any limit $\Hat{X}$ of $\Hat{X}^n$ is a unique strong solution of  
\begin{equation}\label{ET2.1A}
\D\Hat{X}(t) \,=\, b\big(\Hat{X}(t),\Hat{U}(t)\big)\D{t}  + \D\hat{W}(t)\,,
\end{equation}
where $\hat{X}(0)=x$, $x\in\RR^d$, is a limit of  $\Hat{X}^n(0)$,  $\Hat{U}$ is a limit of $\Hat{U}^n$,
and $b\colon\RR^d\times\Act\mapsto\RR^d$ satisfies  
\begin{equation*}
b(x,u) \;=\; \ell - M(x - \langle e,x \rangle^+u) - \Gamma\langle e,x \rangle^+u
\end{equation*}
with $M = \diag(\mu_1^\pi,\dots,\mu_d^{\pi})$ and 
$\Gamma = \diag(\gamma_1^\pi,\dots,\gamma_d^{\pi})$. 
Furthermore,
$\Hat{U}$ is non-anticipative, that is, for $s\le t$, $\Hat{W}(t) - \Hat{W}(s)$ is independent of 
$$\mathcal{F}_s \;\df\; \sigma(\Hat{U}(r), \Hat{W}(r) \colon r\le s)\vee \mathcal{N}\,,$$
where $\mathcal{N}$ is the collection of $\Prob$-null sets.
\end{itemize}
\end{theorem}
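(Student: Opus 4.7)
The plan is to follow a ``fluid limit + martingale FCLT + averaging'' program adapted to the Markov-modulated setting, proving (i)--(iv) in sequence. For (i), I first note that $\Bar X^n(0) \to \rho$ since $\Hat X^n(0)$ is bounded and $\beta < 1$. Dividing \eqref{E2.2} by $n$ and applying the LLN for the Poisson processes $A_{*,i}, S_{*,i}, R_{*,i}$ together with the fast-switching ergodicity of $J^n$ gives, in the limit, the fluid equation $\Bar X_i(t) = \rho_i + \lambda_i^\pi t - \mu_i^\pi \int_0^t \Bar Z_i(s)\,\D s - \gamma_i^\pi \int_0^t \Bar Q_i(s)\,\D s$. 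Combined with work-conservation and nonnegativity, \cref{A2.2} together with $\rho_i = \lambda_i^\pi/\mu_i^\pi$ forces $(\Bar Z, \Bar Q) \equiv (\rho, 0)$ as the unique fixed point, which is the entire solution starting from $\rho$.

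For (ii), the centered martingales $\Hat A_i^n$, $\Hat S_i^n$, $\Hat R_i^n$ are orthogonal (independent driving Poisson processes) with predictable quadratic variations of the form $n^{-2\beta}\int_0^t f_i^n(J^n(s),\cdot)\,\D s$. By (i) and fast averaging of $J^n$, each converges to a multiple of $t$ when $\beta=1/2$ (producing the contribution $\Lambda^2$, since $\rho_i\mu_i^\pi=\lambda_i^\pi$ merges the $A$ and $S$ contributions) and vanishes when $\beta>1/2$. The martingale FCLT then gives a Brownian limit with this covariance. The statement \eqref{E2.8} handles the $\Hat L^n$ piece, contributing $\Theta$ when $\alpha\le 1$ and nothing when $\alpha>1$. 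Asymptotic independence between the Poisson martingales and $\Hat L^n$ (a $\sigma(J^n)$-measurable process) follows from the vanishing of their cross-variations, and assembling the three cases yields the stated $\sigma'_\alpha\sigma_\alpha$.

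For (iii), tightness of $\Hat W^n$ is a byproduct of (ii), while uniform $L^2$ bounds on $\sup_{s\le T}|\Hat X^n(s)|$ follow from \eqref{HatX} by Gr\"onwall's inequality, using $|\Hat Z^n|\vee|\Hat Q^n| \le C|\Hat X^n|$ (immediate from the work-conservation dichotomy) and the boundedness of $\mu_i^n,\gamma_i^n$. The Aldous criterion together with the $O(n^{-\beta})$ jump sizes of $\Hat X^n$ closes path tightness, and hence $\Hat Y^n = \Hat X^n - \Hat X^n(0) - \Hat W^n$ is tight. For (iv), fix a tight $\Hat U^n$ and pass to a subsequence via the Skorokhod representation so all convergences hold almost surely, uniformly on compacts. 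The reparametrization yields $\Hat Q_i^n = \Hat U_i^n \langle e,\Hat X^n\rangle^+$ and $\Hat Z_i^n = \Hat X_i^n - \Hat U_i^n \langle e,\Hat X^n\rangle^+$ on the Halfin--Whitt event $\{\langle e,X^n\rangle>n\}$, with asymptotically negligible corrections on the complement. Together with \eqref{E2.7} and the averaging identities
\[
\int_0^t \bigl(\mu_i^n(J^n(s))-\mu_i^\pi\bigr)\Hat Z_i^n(s)\,\D s \to 0, \quad \int_0^t \bigl(\gamma_i^n(J^n(s))-\gamma_i^\pi\bigr)\Hat Q_i^n(s)\,\D s \to 0,
\]
this produces the drift $b(\Hat X,\Hat U)$ of \eqref{ET2.1A}. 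Strong uniqueness is immediate since $b(\cdot,u)$ is Lipschitz uniformly in $u\in\Act$, and the non-anticipativity of $\Hat U$ with respect to $\Hat W$ is inherited from \cref{D2.1}(iii)(b) in the limit.

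The main obstacle, appearing in both (ii) and (iv), is the rigorous justification of $\int_0^t (f(J^n(s))-\langle\pi,f\rangle)h^n(s)\,\D s \to 0$ when $J^n$ switches at rate $n^\alpha$ and $h^n$ is tight only on the diffusion scale. The clean tool is the Poisson equation $\cQ g = f - \langle\pi,f\rangle e$ for the unscaled generator: applying Dynkin's formula to $g(J^n)$ under the scaled generator $n^\alpha\cQ$ rewrites $\int_0^t (f(J^n(s))-\langle\pi,f\rangle)\,\D s$ as $n^{-\alpha}$ times a boundary term plus an $n^{-\alpha}$-scaled martingale with quadratic variation $O(n^{-\alpha}t)$, yielding an $O(n^{-\alpha/2})$ rate. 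An integration-by-parts with $h^n$ then transfers this smallness to the weighted integrals, uniformly in the three regimes $\alpha>1$, $\alpha=1$, $\alpha<1$, and provides the sharp rate needed to pin down which noise (Poisson versus modulation) survives in the limit.
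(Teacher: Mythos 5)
Your parts (ii)--(iv) are, in outline, the paper's own route: the same martingale-FCLT/random-time-change toolkit of \cite{PTW07} drives (ii) and (iii), and your Poisson-equation corrector for the fast chain is exactly the mechanism the paper encodes in the deviation matrix $\Upsilon$, the process $B^n$ and the martingale $\Hat{L}^n+B^n$ (and in the occupation-measure results of \cite{JMTW17}); the paper merely channels (iv) through the auxiliary process $\breve{X}^n$ of \cref{breveX} and the equivalence \cref{LL4.1} rather than passing to the limit in \cref{HatX} directly. The problem is part (i).

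Under an \emph{arbitrary} admissible policy the fluid-scaled allocation $(\Bar{Z},\Bar{Q})$ is not a function of $\Bar{X}$: on $\{\langle e,\Bar{X}\rangle>1\}$ only the totals $\langle e,\Bar{Z}\rangle=1$ and $\langle e,\Bar{Q}\rangle=\langle e,\Bar{X}\rangle-1$ are pinned down, so your fluid equation is a differential inclusion. What you verify is uniqueness of its \emph{equilibrium}; that does not imply the trajectory started at $\rho$ stays there. Since $\rho$ lies on the boundary $\{\langle e,x\rangle=1\}$, you must rule out excursions into the overloaded region, where an adversarial selection (e.g.\ servers devoted to the slowest classes) gives only the crude bound $\tfrac{d}{dt}\langle e,\Bar{X}\rangle\le\sum_i\lambda_i^\pi-\min_i\mu_i^\pi$, which is typically positive; closing this requires an additional argument (for instance a Gronwall estimate using that $\Bar{Z}\le\Bar{X}$ and $\langle e,\Bar{Z}\rangle=1$ force $\Bar{Z}$ to be near $\rho$ whenever $\Bar{X}$ is), as well as a tightness step for $(\Bar{Z}^n,\Bar{Q}^n)$ before any limit equation can be written. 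None of this is in your sketch, so the step ``unique fixed point, hence the entire solution'' is a genuine gap. The paper avoids the inclusion altogether: it proves stochastic boundedness of the diffusion-scaled process directly from \cref{HatX} (using $0\le\Bar{X}^n_i\le\Bar{X}^n_i(0)+n^{-1}A^n_i$, \cite[Lemma 5.8]{PTW07} and Gronwall), so the functional weak LLN gives $\Bar{X}^n\Rightarrow\rho$, and then the scalar work-conservation identity $\langle e,\Bar{Q}^n\rangle=(\langle e,\Bar{X}^n\rangle-1)^+$ together with $\Bar{Q}^n\ge0$ yields $\Bar{Q}^n\Rightarrow0$ and $\Bar{Z}^n=\Bar{X}^n-\Bar{Q}^n\Rightarrow\rho$, with no per-class fluid analysis at all. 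You should either prove the fluid uniqueness you invoke or switch to this argument; note that your (ii) uses (i) for the time change of $\Hat{S}^n$, so the gap propagates.

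A secondary point: the closing claim of (iv), that non-anticipativity of $\Hat{U}$ ``is inherited from \cref{D2.1}(iii)(b) in the limit,'' is an assertion rather than a proof. The independence hypothesis there is stated through the shifted filtration built on $\tau^n(t)=\inf\{r\ge t\colon J^n(r)=1\}$, and transferring it to the limit requires showing $\tau^n(t)\Rightarrow t$ and invoking the random-time-change lemma before following \cite[Lemma 6]{AMR04}, which is what the paper does; this step should be written out.
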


\subsection{The discounted cost problem}\label{S2.3}

Let $\widetilde{\rc}\colon \RR_+^d \mapsto \RR_+$ take the form 
\begin{equation}\label{E2.3A}
	\widetilde{\rc}(x) = c \abs{x}^m, 
\end{equation}
for some $c>0$ and $m \ge 1$. 
Define  the running cost function
 $\rc\colon \RR^d \times \Act \mapsto \RR_+$  by 
$$\rc(x,u)  \;\df\; \widetilde{\rc}(\langle e,x \rangle^+u).$$

\begin{remark}
In place of \eqref{E2.3A} one may merely stipulate that $\widetilde\rc(x)$ is  a
locally H\"older continuous function such that 
	\begin{equation} \label{A2.3-E1}
 c_1 	 \abs{x}^{\underline{m}}  \;\le\;  \widetilde\rc(x) \;\le\; c_2(1+ \abs{x}^{\overline{m}}) 
	\end{equation}
	for some constants $1\le\underline{m}\le\overline{m}$. See, e.g., Remark 3.1 in \cite{AP18}.
 For the discounted problem, the lower bound in \eqref{A2.3-E1} is not required.
\end{remark}

For each $n$ and $\vartheta>0 $, given $\Hat{X}^n(0)$, the $\vartheta$-discounted problem can be written as
$$\Hat{V}^n_{\vartheta}(\Hat{X}^n(0)) \;\df\; 
\inf_{\Hat{U}^n\in\widehat{\Uadm}^n}\,\mathfrak{J}^n_{\vartheta}(\Hat{X}^n(0),\Hat{U}^n) \,,
$$
and 
$$ \mathfrak{J}^n_{\vartheta}(\Hat{X}^n(0), \Hat{U}^n) \;\df\;   
\Exp\left[\int_0^{\infty}\E^{-\vartheta s}\rc(\Hat{X}^n(s),\Hat{U}^n(s))\,\D{s}\right] 
\,.$$

Let $\Uadm$ denote the set of all admissible controls for the limiting diffusion in \cref{ET2.1A}. 
The $\vartheta$-discounted cost criterion for the limiting controlled diffusion is defined by 
$$
\mathfrak{J}_{\vartheta}(x, \Hat{U}) \;\df\;   
\Exp_x^{\Hat{U}}\left[\int_0^{\infty}\E^{-\vartheta s}\rc(\Hat{X}(s),\Hat{U}(s))\,\D{s}\right]
\,, 
$$ 
for $\Hat{U}\in\Uadm$, and the $\vartheta$-discounted problem is
\begin{equation*}%\label{disc-problem}
\Hat{V}_{\vartheta}(x) \;\df\; \inf_{\Hat{U}\in\Uadm}\mathfrak{J}_{\vartheta}(x,\Hat{U}) \,. 
\end{equation*}

In the study of the discounted and ergodic control problems, we assume that
$\Hat{X}^n(0)\rightarrow x\in\RR^d$ as $n\rightarrow\infty$,
and, as mentioned earlier, we impose the conditions in \cref{A2.1,A2.2}.

\begin{theorem}\label{T2.2}
For any $\vartheta > 0$, it holds that
$$
 \lim_{n\rightarrow\infty} \Hat{V}^n_{
 	\vartheta}(\Hat{X}^n(0)) \;=\; \Hat{V}_{\vartheta}(x)\,.
$$
\end{theorem}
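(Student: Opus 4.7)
The plan is to establish both inequalities $\liminf_{n\to\infty} \Hat{V}^n_\vartheta(\Hat{X}^n(0)) \ge \Hat{V}_\vartheta(x)$ and $\limsup_{n\to\infty} \Hat{V}^n_\vartheta(\Hat{X}^n(0)) \le \Hat{V}_\vartheta(x)$. As the introduction indicates, the decisive preliminary ingredient is a uniform moment bound: I would first show that for every $p \ge m$ there exists $C_p$, independent of the scheduling policy, such that
\[
\sup_{\Hat{U}^n \in \widehat{\Uadm}^n} \Exp\bigl[\abs{\Hat{X}^n(t)}^p\bigr] \;\le\; C_p\bigl(1 + \abs{\Hat{X}^n(0)}^p\bigr)(1+t)^p
\]
for all $n$ large enough. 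This is obtained by applying It\^o's formula to a smooth version of $\abs{\,\cdot\,}^p$ using the semimartingale decomposition~\eqref{HatX}, where the abandonment term $-\Gamma \Hat{Q}^n$ provides the stabilizing drift, the control-dependent term $-M(\Hat{X}^n-\langle e,\Hat{X}^n\rangle^+\Hat{U}^n)$ is handled via \cref{D2.1}(ii), and the modulation-induced term $\Hat{L}^n$ is controlled by \eqref{E2.7}--\eqref{E2.8} together with the averaging estimate behind \cref{L3.1}. This bound gives the uniform integrability of the integrand $\E^{-\vartheta s}\rc(\Hat{X}^n(s),\Hat{U}^n(s))$ needed for both bounds.

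For the lower bound, I would pick an $\epsilon$-nearly optimal sequence $\Hat{U}^{n,\epsilon} \in \widehat{\Uadm}^n$ with $\mathfrak{J}^n_\vartheta(\Hat{X}^n(0),\Hat{U}^{n,\epsilon}) \le \Hat{V}^n_\vartheta(\Hat{X}^n(0)) + \epsilon$. Since $\Hat{U}^{n,\epsilon}$ takes values in the compact set $\Act$, it is automatically tight in a relaxed-control topology, so by \cref{T2.1}(iii)--(iv) the joint triple $(\Hat{X}^n,\Hat{U}^{n,\epsilon},\Hat{W}^n)$ admits, along a subsequence, a weak limit $(\Hat{X},\Hat{U},\Hat{W})$ solving~\eqref{ET2.1A} with $\Hat{U}$ non-anticipative, i.e., $\Hat{U} \in \Uadm$. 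Continuity of $\rc$ combined with uniform integrability from the moment bound allows passage to the limit via Fatou, yielding
\[
\liminf_{n\to\infty} \mathfrak{J}^n_\vartheta\bigl(\Hat{X}^n(0),\Hat{U}^{n,\epsilon}\bigr) \;\ge\; \mathfrak{J}_\vartheta(x,\Hat{U}) \;\ge\; \Hat{V}_\vartheta(x),
\]
and sending $\epsilon\to 0$ completes the lower bound.

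The harder direction is the upper bound, for which I would follow the scheme of \cite{AMR04}. Fix an $\epsilon$-optimal admissible control $\Hat{U}^\star$ for the limiting problem, so that $\mathfrak{J}_\vartheta(x,\Hat{U}^\star) \le \Hat{V}_\vartheta(x)+\epsilon$. By a standard density argument — first approximating $\Hat{U}^\star$ by a continuous precise Markov feedback and then by a time-mollified one — it suffices to treat the case where $\Hat{U}^\star$ is a continuous feedback $u\colon\RR^d\to\Act$. I would lift $u$ to the $n$-th system by setting $\Hat{U}^n(t) \df u(\Hat{X}^n(t))$, with the integer-rounding on $Z^n$ done so as to preserve the constraints~\eqref{con-XQZ} and the admissibility of \cref{D2.1}. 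The weak-convergence statement of \cref{T2.1}, applied now under this specific feedback, together with pathwise uniqueness of~\eqref{ET2.1A}, identifies any subsequential limit of $(\Hat{X}^n,\Hat{U}^n)$ with the solution $(\Hat{X},u(\Hat{X}))$ driven by $\Hat{U}^\star$. A second application of the uniform moment bound yields uniform integrability and hence $\mathfrak{J}^n_\vartheta(\Hat{X}^n(0),\Hat{U}^n) \to \mathfrak{J}_\vartheta(x,\Hat{U}^\star)$, giving $\limsup_n \Hat{V}^n_\vartheta(\Hat{X}^n(0)) \le \Hat{V}_\vartheta(x) + \epsilon$, which suffices upon letting $\epsilon\to 0$.

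The principal obstacle I anticipate is twofold. First, obtaining the uniform-in-policy moment bound despite the Markov modulation: the pre-limit drift is only stable ``on average'' over $J^n$, so one must absorb the fluctuations of $\mu_i^n(J^n(\cdot))$ and $\gamma_i^n(J^n(\cdot))$ around their $\pi$-averages through a corrector built from the Poisson equation on $\cK$ associated with $\cQ$, exactly in the spirit of the Lyapunov construction announced for \cref{L3.1}. Second, the feedback lift $\Hat{U}^n = u(\Hat{X}^n)$ must be realized as a genuine admissible proportions-scheduling policy as defined in \cref{D2.1}, which requires particular care at the boundary $\langle e,X^n\rangle \le n$ where $\Hat{U}^n$ is forced to equal $e_d$, and requires verifying that the rounded allocation does not perturb the cost at leading order.
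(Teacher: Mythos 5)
Your overall architecture is sound, and your lower bound is essentially the paper's: the paper combines the moment bound of \cref{LL4.2}, the convergence results of \cref{T2.1}, and the HJB characterization of \cref{T4.1}, and then defers to the proof of Theorem~4(i) in \cite{AMR04}, which is precisely the near-optimal-policy/weak-convergence/Fatou argument you outline (with relaxed controls standing in for tightness of $\Hat{U}^n$). Where you genuinely diverge is in the two supporting steps. For the moment bound, the paper does not run It\^o's formula on $\Hat{X}^n$ with a Poisson-equation corrector; it introduces the auxiliary process $\breve{X}^n$ of \cref{breveX}, whose rates are already the averaged constants $\Bar{\mu}^n_i,\Bar{\gamma}^n_i$, applies the argument of \cite[Lemma~3]{AMR04} to it, and transfers the bound to $\Hat{X}^n$ via the asymptotic equivalence of \cref{LL4.1}, so no corrector is needed. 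For the upper bound, the paper never reduces to a continuous feedback: it takes the merely measurable optimal Markov control $v_h$ provided by \cref{T4.1}/\cite[Theorem~1]{AMR04}, rounds it into the scheduling policy \cref{PT2A}, and proves convergence of costs not by identifying the weak limit of the controlled state process, but by a verification-type argument with the smooth, polynomially growing value function: the HJB-discrepancy process $K^n$ satisfies $\int_0^{\cdot}\E^{-\vartheta s}K^n(s)\,\D{s}\Rightarrow 0$ (the analogue of claim (49) in \cite{AMR04}, i.e.\ \cref{PL2B}), and the method of \cite[Theorem~4(ii)]{AMR04} then yields $\mathfrak{J}^n_\vartheta(\Hat{X}^n_h(0),\Hat{U}^n_h)\to\Hat{V}_\vartheta(x)$.

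Two steps of your route therefore need more than the ``standard'' facts you invoke, and they are exactly what the paper's $K^n$ device is designed to avoid. First, approximating an arbitrary $\epsilon$-optimal admissible control $\Hat{U}^\star$ by a continuous precise Markov feedback with nearly the same discounted cost is not a routine density statement; the workable version goes through the HJB, replacing $\Hat{U}^\star$ by a Markov minimizer via \cref{T4.1} and then constructing continuous $\epsilon$-optimal selectors of $u\mapsto\langle b(x,u),\grad\Hat{V}_\vartheta(x)\rangle+\rc(x,u)$ (feasible here since $b$ is affine and $\rc$ convex in $u$) followed by a verification argument --- or skipping continuity altogether as the paper does. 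Second, your appeal to \cref{T2.1}(iv) under the lifted feedback presupposes tightness of $\Hat{U}^n$ in $(\DD^d,\cJ)$, which is problematic: by the parametrization in \cref{S2.1}, $\Hat{U}^n$ is forced to equal $e_d$ whenever $\langle e,X^n\rangle\le n$, so under the lifted policy it may chatter between $e_d$ and (the rounding of) $u(\Hat{X}^n)$ near the empty-queue boundary. You would need to pass to relaxed controls, as in your lower bound, and then argue that the limiting relaxed control concentrates on $u(\Hat{X}(t))$ whenever $\langle e,\Hat{X}(t)\rangle>0$, using that $b$ and $\rc$ are insensitive to $u$ when $\langle e,x\rangle^+=0$; the paper's argument only sees $\Hat{U}^n$ through $\langle e,\Hat{X}^n\rangle^+\Hat{U}^n$ inside $K^n$ and so sidesteps this. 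With these repairs your scheme yields both inequalities, but as written these are the places where the proof is incomplete.
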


%%%%%%%%%%%%%%%%%%%%%%%%%%%%%%%%%%%%%%%%%%%%%%%%%%%%%%%%%%%%%%%%%%%%%%%%%%%%%%%%%%%%%%%%%%%%%%%%%%%%%%%%%%

\subsection{The ergodic control problem}\label{S2.4}
Given $\Hat{X}^n(0)$, define the ergodic cost associated with $\Hat{X}^n$ and $\Hat{U}^n$ by
$$ \mathfrak{J}^n (\Hat{X}^n(0),\Hat{U}^n) \;\df\;
\limsup_{T\rightarrow\infty}\frac{1}{T}\Exp\left[\int_{0}^{T}\rc(\Hat{X}^n(s),\Hat{U}^n(s))\,\D{s}\right]\,, $$
and the associated ergodic control problem by 
$$ \Hat{V}^n(\Hat{X}^n(0)) \;\df\; \inf_{\Hat{U}^n\in\widehat{\Uadm}^n} \mathfrak{J}^n(\Hat{X}^n(0), \Hat{U}^n)\,$$
%{\color{magenta} Write like this: Given  $\Hat{X}^n(t)$ satisfying \eqref{HatX} under the some scheduling  $\Hat{U}^n$, with $\Hat{X}^n(0)=x$ define the ergodic cost as
%$$ \mathfrak{J}^n (x,\Hat{U}^n, \Hat{X}^n) \;\df\;
%limsup_{T\rightarrow\infty}\frac{1}{T}\Exp\left[\int_{0}^{T}\rc(\Hat{X}^n(s),\Hat{U}^n(s))\,\D{s}\right]\,, $$
%and the associated optimal ergodic cost by 
% $$ \Hat{V}^n(x) \;\df\; \inf \mathfrak{J}^n(x, \Hat{U}^n, \Hat{X}^n)\,,$$
%where the infimum is taken over all all pairs $(\Hat{X}^n, \Hat{U}^n )$  satisfying:  \textbf{i.} $\Hat{X}^n(0)=x$. \textbf{ii.} $\Hat{X}^n(t)$ satisfies \eqref{HatX} under the  scheduling  $\Hat{U}^n$.  Since $\Hat{X}^n$ is completely described by the scheduling $\Hat{U}^n$ and the starting point $x$, we write $\mathfrak{J}^n (x,\Hat{U}^n)$ instead of $\mathfrak{J}^n (x,\Hat{U}^n, \Hat{X}^n)$. }
Analogously, we define the ergodic cost associated with the limiting controlled process $\Hat{X}$ in
\cref{ET2.1A} by 
$$ \mathfrak{J}(x,\Hat{U}) \;\df\; \limsup_{T\rightarrow\infty}\frac{1}{T}\Exp\left[\int_{0}^{T}\rc(\Hat{X}(s),\Hat{U}(s))\,\D{s} \right]\,,$$
and the ergodic control problem by
$$\varrho_*(x)\;\df\;\inf_{U\in\Uadm} \mathfrak{J}(x,U)\,.$$
The value $\varrho_*(x)$ is independent of $x$. 
As we show in \cref{T4.2}, the infimum is realized with a stationary Markov control and
$\varrho_*(x) = \varrho_*$.

The asymptotic optimality of the value functions is stated below (proof in \cref{S4.3,S4.4}). 
\begin{theorem}%\label{Ergodic}
It holds that
	$$\lim_{n\rightarrow\infty}\Hat{V}^n(\Hat{X}^n(0))\;=\;\varrho_*(x)\,. $$ 
\end{theorem}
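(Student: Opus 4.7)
The plan is to prove the two matching bounds $\liminf_{n\to\infty} \Hat{V}^n(\Hat{X}^n(0)) \ge \varrho_*(x)$ and $\limsup_{n\to\infty} \Hat{V}^n(\Hat{X}^n(0)) \le \varrho_*(x)$ separately, following the general scheme of \cite{ABP15} with the modifications demanded by the Markov modulation that were already anticipated in the introduction.

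For the lower bound I would first use the geometric ergodicity of \cref{L3.1} together with a common Lyapunov function to extract a uniform moment estimate
$$ \sup_{n} \sup_{\Hat{U}^n\in\widehat{\Uadm}^n} \sup_{T\ge 1} \frac{1}{T}\Exp\int_0^T \abs{\Hat{X}^n(s)}^{m+\delta}\,\D s \;<\; \infty $$
for some $\delta>0$, which yields both tightness of the mean empirical measures and uniform integrability of the running cost. Let $\{\Hat{U}^n\}$ be an almost optimal sequence, and define the mean empirical measures $\Phi^n_T$ on $\RR^d\times\Act$ by $\langle \Phi^n_T, f\rangle \df \frac{1}{T}\Exp\int_0^T f(\Hat{X}^n(s),\Hat{U}^n(s))\,\D s$. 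Since the drift of $\Hat{X}^n$ depends on the fast-oscillating chain $J^n$, I would introduce an auxiliary semimartingale $\widetilde{X}^n$ whose dynamics use the $\pi$-averaged rates appearing in \cref{ET2.1A}, and prove that $\Hat{X}^n - \widetilde{X}^n \to 0$ in mean uniformly on compact time intervals, using the Poisson-equation bounds underlying \cref{L3.1} together with the averaging estimates obtained en route to \cref{T2.1}. The limiting behaviour of $\Phi^n_T$ then reduces to that of the corresponding mean empirical measure of $\widetilde{X}^n$, and a standard martingale characterisation identifies every subsequential limit $\Phi$ as an ergodic occupation measure of \cref{ET2.1A}. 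Combined with uniform integrability of $\rc$ and the characterisation $\varrho_*=\inf_{\Phi}\langle\rc,\Phi\rangle$ over ergodic occupation measures furnished by \cref{T4.2}, this delivers the lower bound.

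For the upper bound I would invoke \cref{T4.2} to obtain a stationary Markov control $v_*$ for the limiting diffusion that realises $\varrho_*$. Since a direct lift of $v_*$ to the $n^{\rm th}$ system may fail to stabilise $\Hat{X}^n$, I would adopt the spatial-truncation device of \cite{ABP15}: fix a large $R$, retain $v_*$ on $B_R$, and splice in a constant control $u_0\in\Act$ outside $B_R$ chosen so that the $\pi$-averaged drift is strongly stable (such $u_0$ exists by the static-priority construction underlying \cref{L3.1}). The concatenated control $\widetilde v_R$ is $\varepsilon$-optimal for the limiting ergodic problem as $R\to\infty$. I would then map $\widetilde v_R$ to an admissible scheduling policy $\Hat{U}^{n,R}\in\widehat{\Uadm}^n$ through the $Z^n\leftrightarrow\Hat{U}^n$ correspondence of \cref{S2.1}, verify geometric ergodicity of the resulting $\Hat{X}^n$ via \cref{L3.1} (the stable constant control outside $B_R$ supplies the Foster--Lyapunov condition), and combine \cref{T2.1}(iv) with uniform integrability of the cost to conclude $\limsup_{n\to\infty} \mathfrak{J}^n(\Hat{X}^n(0),\Hat{U}^{n,R})\le \mathfrak{J}(x,\widetilde v_R)$. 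Sending $R\to\infty$ closes the gap.

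The hard part is the convergence of mean empirical measures under fast Markov modulation: the martingale arguments of \cite{ABP15} do not apply to $\Hat{X}^n$ directly because the coefficients vary along the fast-oscillating $J^n$, and the coupling between $J^n$ and $\Hat{X}^n$ must be undone \emph{uniformly} over the admissible control and on long time horizons. The auxiliary process $\widetilde{X}^n$ is the natural bridge, but the quantitative control of $\Hat{X}^n-\widetilde{X}^n$ requires combining the Poisson-equation Lyapunov bounds from \cref{L3.1} with the averaging estimates of \cref{T2.1}, and carrying them through uniformly over $\widehat{\Uadm}^n$ rather than for a single policy, which is precisely the step that forces the introduction of the auxiliary process.
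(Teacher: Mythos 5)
Your overall architecture---lower bound via tightness and identification of limit points of mean empirical measures as ergodic occupation measures of \cref{ET2.1A}, upper bound via the spatial truncation of \cref{T4.3}, a mapped scheduling policy, a Foster--Lyapunov estimate, and convergence of empirical measures---is the same as the paper's (\cref{T2.3,T2.4}, proved through \cref{L4.1,L4.2,L3.3,L4.6}). However, there is a genuine gap at precisely the step you single out as the hard one. You propose an auxiliary process $\widetilde{X}^n$ with $\pi$-averaged coefficients and an estimate $\Hat{X}^n-\widetilde{X}^n\to 0$ \emph{uniformly on compact time intervals}, and then claim the limiting behaviour of the mean empirical measures reduces to that of $\widetilde{X}^n$. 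This reduction does not follow: in the ergodic problem the limit is taken jointly in $(n,T)$, with $T=T_l\to\infty$ dictated by the near-optimal sequence, and closeness on compacts (whose error constants typically grow with the horizon, e.g.\ through Gronwall) gives no control of $\frac{1}{T}\int_0^T$ as $T\to\infty$. The paper's auxiliary process is built differently precisely to avoid this: $\tilde{X}^n=\Hat{X}^n+B^n$, where $B^n$ in \cref{ES4.2A} is a corrector constructed from the fundamental matrix $\Upsilon$ of the modulating chain (a Poisson-equation corrector for $J^n$), so that $\sup_{t\ge 0}\,\abs{\Hat{X}^n(t)-\tilde{X}^n(t)}\le \Bar{C}_0\,n^{-\nicefrac{\alpha}{2}}$ (time-uniform, \cref{PL4.1E}), $\Hat{L}^n+B^n$ is a martingale, and the Kunita--Watanabe formula applied to $\tilde{X}^n$ identifies the limiting generator, with the extra covariance $\Theta$ (for $\alpha\le 1$) arising from the quadratic covariation $[B^n_i,B^n_{i'}]$. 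Your averaged-dynamics $\widetilde{X}^n$ also leaves unexplained how $\Theta$ would be recovered in the identification of the limit.

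Two secondary points. First, the moment bound you claim uniformly over \emph{all} admissible policies does not follow from \cref{L3.1}, which concerns only the ``averaged'' generator under the static priority policy $\tilde{z}^n$ (recall the paper's remark that ergodicity under general Markov policies is open); it is also stronger than needed. The paper's \cref{L4.1} proves the conditional statement---the bound holds along any sequence with $\sup_n \mathfrak{J}^n(\Hat{X}^n(0),\Hat{U}^n)<\infty$---by applying the Kunita--Watanabe formula to the corrected process and absorbing the $(\langle e,x\rangle^+)^m$ term into the running cost, which suffices since one only needs it along a nearly optimal sequence, exactly as you then use it. Second, in the upper bound, ``combine \cref{T2.1}(iv) with uniform integrability'' is too loose: process-level weak convergence on compact intervals does not by itself yield convergence of long-run average costs. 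The paper instead proves convergence of the mean empirical measures under the mapped policy (\cref{L4.6}), again through the corrected process, together with the uniform Foster--Lyapunov bound of \cref{L3.3} applied with exponent $2m$ to get uniform integrability of the cost. Your plan contains the right ingredients here, but the conclusion must be routed through empirical-measure convergence rather than the FCLT.
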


For $u\in\Uadm$, let $\cL_u\colon\Cc^2(\RR^d) \mapsto \Cc^2(\RR^d\times\Act)$ be the generator of 
$\Hat{X}$ in \cref{ET2.1A} satisfying 
\begin{equation}\label{def-Lu}
\cL_uf(x) \;=\; \sum_{i\in\cI}b_i(x,u)\partial_if(x) + \sum_{i,j\in\cI}a_{ij}\partial_{ij}f(x)\,,
\end{equation}
with $a_{ii} \df \Ind(\alpha\ge1)\lambda^{\pi}_i + \frac{1}{2}\Ind(\alpha\le1)\theta_{ii}$, 
and $a_{ij} \df \frac{1}{2}\Ind(\alpha\le1)\theta_{ij}$ for $i\neq j$.
We denote by $\Usm$, $\Ussm$ and $\eom$, the sets of stationary controls, stable controls and ergodic occupation measures, respectively.
We extend the definition of $b$ and $\rc$ 
by using the relaxed control framework (see, for example, Section 2.3 in \cite{ABG12}).
Without changing the notation, for $v\in\Usm$, we replace $b_i$ by 
$$
	b_i(x,v(x)) \;=\; \int_{\Act}b_i(x,u)\,v(du\,|\,x)\,, \quad \text{for}\;i\in\cI\,,
$$
where $v(du\,|\,x)$ denotes a Borel measurable kernel on $\Act$ given $x$,
and replace $\rc$ analogously. 
If a control is a measurable map from $\RR^d$ to $\Act$, we say it is a precise control.  
\section{Ergodic Properties}\label{S3}

The limiting diffusion belongs to the class of piecewise linear diffusions studied in \cite{Dieker-Gao}. Applying Theorem 3 in \cite{Dieker-Gao}, we deduce that 
the limiting process $\hat{X}$ with abandonment in \cref{ET2.1A} is exponentially ergodic under a constant control $\bar{u}=e_d=(0,\dots,0, 1)'$. 
By Theorem 3.5 in \cite{APS}, the limiting process $\hat{X}$ in \cref{ET2.1A} is exponentially ergodic  under any constant control.
 (Note that in \cite[Theorem 2]{Dieker-Gao}, only positive recurrence is shown under the control $\bar{u}=e_d.$)
We summarize the ergodicity properties of the limiting controlled process $\hat{X}$ in the following proposition.

\begin{proposition}%\label{Prop3.1}
The controlled diffusion $\hat{X}$ in \eqref{ET2.1A} is exponentially ergodic under any constant control $u \in \Act$. 
\end{proposition}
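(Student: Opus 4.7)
The plan is to reduce the statement to a direct application of \cite[Theorem~3.5]{APS}, which is tailored to piecewise linear diffusions of precisely the form of \eqref{ET2.1A} under a constant control. Fix any $u \in \Act$. Substituting into \eqref{ET2.1A}, the SDE becomes
\[
d\hat X(t) \;=\; \bigl[\ell - M\hat X(t) + \langle e, \hat X(t)\rangle^+ (M - \Gamma)u\bigr]\,dt + d\hat W(t),
\]
with constant covariance $\sigma_\alpha'\sigma_\alpha$ from \cref{T2.1}(ii). The drift is globally Lipschitz and piecewise affine across the hyperplane $\{\langle e,x\rangle=0\}$, so the SDE admits a unique strong solution and falls squarely within the framework of \cite{APS} (and, for $u=e_d$, of \cite{Dieker-Gao}).

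What remains is to verify the two hypotheses of \cite[Theorem~3.5]{APS}: a Foster--Lyapunov drift inequality and a nondegeneracy/controllability condition on the diffusion. For the drift inequality I would use a Lyapunov function of the form $V(x)=(1+|x|^2)^{p/2}$ (or an exponential variant to upgrade to geometric rates). On the underloaded region $\{\langle e,x\rangle\le 0\}$ the drift reduces to the linear stable form $\ell - Mx$, so the inequality $\cL_u V(x)\le -\eta V(x) + C$ is immediate from the positivity of the diagonal matrix $M$. On the overloaded region the linear coefficient of $x$ is the rank-one perturbation $A_u \df -M + (M-\Gamma)u e^\top$. To handle this region I would analyze the deterministic ODE $\dot x = b(x,u)$: noting that its only equilibrium is $A_u^{-1}\ell$, that $A_u$ is Hurwitz for every $u\in\Act$ (as can be checked via the characteristic equation $1 = \sum_{i\in\cI}\frac{(\mu_i^\pi - \gamma_i^\pi)u_i}{\mu_i^\pi + \lambda}$ together with the positivity of $\mu_i^\pi, \gamma_i^\pi$ and $\sum_i u_i = 1$), I would invoke a converse-Lyapunov construction standard for piecewise linear globally stable ODEs to produce $V$ satisfying the drift inequality uniformly in $x$.

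For the nondegeneracy condition, the regime $\alpha\ge 1$ is immediate since $\sigma_\alpha'\sigma_\alpha\succeq \Lambda^2$ is uniformly positive definite, yielding the strong Feller property and full-support irreducibility of the transition densities; combined with the drift inequality, standard Harris-type arguments then deliver exponential ergodicity. The main obstacle is the regime $\alpha<1$, where the covariance reduces to $\Theta$, which can be rank-deficient. In this degenerate regime \cite[Theorem~3.5]{APS} proceeds via a Kalman-type controllability condition on the pair $(A_u,\sigma_\alpha)$ restricted to each linear piece; verifying this condition in our setting reduces to a linear-algebraic check based on $M$, $\Gamma$, and the structure of $\Theta$ inherited from the background chain. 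Once the drift and nondegeneracy conditions are in place, \cite[Theorem~3.5]{APS} yields exponential ergodicity of $\hat X$ for every constant control $u\in\Act$, completing the proof.
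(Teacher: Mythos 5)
Your core move is exactly the paper's: the proof in the text consists of nothing more than observing that \cref{ET2.1A} under a constant control is a piecewise Ornstein--Uhlenbeck process of the class treated in \cite{Dieker-Gao} and \cite{APS}, citing \cite[Theorem~3]{Dieker-Gao} for $u=e_d$ and \cite[Theorem~3.5]{APS} for arbitrary constant $u\in\Act$, with no hypotheses re-verified. So your reduction to \cite[Theorem~3.5]{APS} is the intended argument, and if you stop there the proposal matches the paper. The extra verification layer you sketch, however, would not stand on its own as written. First, Hurwitz-ness of $A_u=-M+(M-\Gamma)ue^{\transp}$ on each linear piece (which is true, and your characteristic equation $\sum_i u_i\frac{\lambda+\gamma_i^\pi}{\lambda+\mu_i^\pi}=0$ is the right route once one notes each summand has positive real part when $\Re\lambda\ge0$) does \emph{not} by itself yield global stability of the piecewise-linear ODE: stability of each piece of a piecewise-linear system does not imply stability of the switched system, and bridging that gap is precisely the content of the common-quadratic-Lyapunov-function analysis in \cite{Dieker-Gao} and the Lyapunov constructions in \cite{APS}; your appeal to a ``standard converse-Lyapunov construction for globally stable piecewise linear ODEs'' presupposes the global stability you would need to prove. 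Second, in the regime $\alpha<1$ the covariance $\Theta$ has rank at most $K-1$ (the functions $\lambda_i(\cdot)-\rho_i\mu_i(\cdot)$ are $\pi$-centered and $\Upsilon e=0$), so it is genuinely degenerate whenever $d\ge K$, and you leave the corresponding nondegeneracy/controllability verification as an unexecuted ``linear-algebraic check''---that is the one place where the claim could actually fail, and deferring it means your self-contained version of the proof is incomplete. In short: as a citation-based proof it coincides with the paper's; as a from-scratch verification it has two real gaps (global stability of the piecewise ODE, and the degenerate $\alpha<1$ covariance).
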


\begin{remark}
As a consequence of the proposition, if $\tilde{v}$ is  a stationary Markov control
which is constant on the complement of some compact set, then  the controlled diffusion $\Hat{X}$ in \cref{ET2.1A} is exponentially ergodic under this control. For the diffusion-scaled process $\hat{X}^n$, we first prove exponential ergodicity
 under a static priority scheduling policy in Lemma \ref{L3.1}. 
It then follows that any stationary Markov scheduling policy, which
agrees with this static priority policy outside a compact set, 
%\sout{together with a corresponding scheduling policy of any stationary Markov control applied to the %prelimit process inside the compact set,} 
is geometrically ergodic. 
%In fact, by a slight modification of the proof of Lemma \ref{L3.1}, it can be shown that any constant %scheduling policy is also geometrically ergodic, and so is a similarly constructed concatenated scheduling %policy; see Corollary \ref{C3.1}. 
We remark here that exponential ergodicity of the diffusion-scaled process under any stationary Markov scheduling policy is an open problem
(compare with the study of ergodicity for the standard `V' network in \cite{GS12}). 
\end{remark}

We next focus on the ergodicity properties of the diffusion-scaled process $\hat{X}^n$. 

\begin{definition}\label{D3.1}
For each $n\in\NN$, let $\tilde{z}^n = \tilde{z}^n(x)$, for $x\in\ZZ^d_+$,
denote the scheduling policy defined by
$$
\tilde{z}^n_i(x) \;\df\; x_i \wedge \left(n - \sum_{i'=1}^{i-1}x_{i'} \right)^+ 
\quad \text{for} \quad i\in\cI \,.
$$ 
\end{definition}
By using the balance equation $x_i = \tilde{z}^n_i(x) + \tilde{q}^n_i(x)$ and \cref{D3.1}, we obtain for $x\in\ZZ^d_+$ and $i\in\cI$, that
$$\tilde{q}^n_i(x) \;=\; \left[x_i - \left(n - \sum_{i'=1}^{i-1}x_{i'}\right)^+ \right]^+ \,.$$

\begin{definition}\label{D3.2}
For $x\in\RR^d$, define
\begin{align*}
\tilde{x}^n(x) \;\df\; \left(x_1 - \rho_1n,\dots,x_d - \rho_dn\right)'\,,
\quad 
\Hat{x}^n(x) \;\df\; n^{-\beta}\tilde{x}^n(x)\,,
\end{align*}
$\fX^n \df \{\Hat{x}^n(x)\colon x\in\ZZ^d_+\}$, and $\tilde{\fX}^n \df \{\Hat{x}^n(x)\colon x\in\fA^n \}$, 
with
$$\fA^n \,\df\, \bigl\{x\in\RR^d_+ \colon \norm{x-\rho n}\;\le\;c_0n^{\beta}\bigr \}  $$
for some positive constant $c_0$.
\end{definition}

\begin{definition}\label{D3.3}
	Denote the infinitesimal generator of the ``average'' process by 
	\begin{multline*}
	\Bar{\Lg}^{z^n}_nf(x)\;\df\; \sum_{i\in\cI}\Bar{\lambda}_i^n \big(f(x+ e_i) - f(x)\big)
	+ \sum_{i\in\cI}\Bar{\mu}_i^nz^n_i(x)\big(f(x - e_i) - f(x)\big) \\
	+ \sum_{i\in\cI}\Bar{\gamma}_i^nq^n_i(x,z)\big(f(x - e_i) - f(x)\big)
	\end{multline*}
	for $f\in\Cc_b(\RR^d)$.
\end{definition}

\begin{lemma}\label{L3.1} 
Let $\tilde{z}^n$ be the scheduling policy in \cref{D3.1}.
Then for any even integer $m\ge2$, there exist a positive vector $\xi$, 
positive constants  $C_1$ and $C_2$, and $n_0\in\NN$, such that the functions $f_n$, $n\in\NN$, defined by
\begin{equation}\label{EL3.1A}
f_n(x) \;\df\; \sum_{i\in\cI}\xi_i\abs{x_i - \rho_in}^m\,, \quad \forall\;x\in\ZZ^d_+ \,,
\end{equation}
satisfy
\begin{equation*}
\Bar{\Lg}^{\tilde{z}^n}_nf_n(x) \;\le\; C_1n^{m\beta} - C_2f_n(x)\,,
\quad \forall x\in\ZZ^n_+\,, \quad \forall n\ge n_0\,.
\end{equation*}
\end{lemma}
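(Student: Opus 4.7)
The strategy is a direct Foster--Lyapunov drift computation. Set $y_i\df x_i-\rho_i n$, so $f_n(x)=\sum_{i\in\cI}\xi_i y_i^m$ (as $m$ is even, $\abs{y_i}^m=y_i^m$). Expanding each one-step difference via the binomial formula gives
$$(y_i\pm 1)^m - y_i^m \;=\; \pm m\, y_i^{m-1} + \binom{m}{2}y_i^{m-2} + P_i^{\pm}(y_i),\qquad \deg P_i^{\pm}\le m-3\,.$$
Substituting into \cref{D3.3} and using the work-conservation identity $\tilde z_i^n(x)+\tilde q_i^n(x)=x_i$, the $y_i^{m-1}$ coefficient of the drift equals $m[\bar\lambda_i^n-\bar\mu_i^n x_i+(\bar\mu_i^n-\bar\gamma_i^n)\tilde q_i^n(x)]$, which I split as
$$-m\bar\mu_i^n y_i^m \,+\, m y_i^{m-1}(\bar\lambda_i^n-\bar\mu_i^n\rho_i n) \,+\, m(\bar\mu_i^n-\bar\gamma_i^n)\,y_i^{m-1}\tilde q_i^n(x)\,.$$
The critical input is that \cref{A2.1,A2.2} together force $\bar\lambda_i^n-\bar\mu_i^n\rho_i n = n^\beta(\Hat\lambda_i^\pi-\rho_i\Hat\mu_i^\pi)+o(n^\beta)=O(n^\beta)$, because $\rho_i\mu_i^\pi=\lambda_i^\pi$ kills the $O(n)$ piece.

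\textbf{Principal drift and routine remainders.} Summing with positive weights $\xi_i$, the first piece produces the desired principal negative drift $-m\sum_{i\in\cI}\xi_i\bar\mu_i^n y_i^m \le -C_2 f_n(x)$ for all sufficiently large $n$ (any positive $\xi$ works, since $\mu_i^\pi>0$). The $O(n^\beta)$ contribution is handled by Young's inequality $\abs{y_i}^{m-1}n^\beta\le\varepsilon \abs{y_i}^m+C_\varepsilon n^{m\beta}$, giving $\varepsilon f_n(x) + C n^{m\beta}$. The lower-order binomial remainders $\binom{m}{2}y_i^{m-2}+P_i^{\pm}(y_i)$ are multiplied by jump rates of magnitude $\bar\lambda_i^n+\bar\mu_i^n\tilde z_i^n+\bar\gamma_i^n\tilde q_i^n=O(n+\abs{y_i})$, and a further application of Young's inequality again produces contributions of the form $\varepsilon f_n(x) + C n^{m\beta}$.

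\textbf{Main obstacle: the queue cross-term.} The only undispatched term is $m\sum_{i\in\cI}\xi_i(\bar\mu_i^n-\bar\gamma_i^n)y_i^{m-1}\tilde q_i^n(x)$, whose sign is not a priori controlled. This is where the static-priority structure of $\tilde z^n$ must be exploited. Let $k^\star(x)\df\max\bigl\{k\colon\sum_{i\le k}x_i\le n\bigr\}$; then $\tilde q_i^n=0$ for $i<k^\star$, $\tilde q_{k^\star}^n$ is fractional, and $\tilde q_i^n=x_i$ for $i>k^\star$. More importantly, work-conservation combined with the critical loading $\sum_i\rho_i=1$ gives the identity $\sum_{i\in\cI} \tilde q_i^n(x) = \bigl(\sum_i x_i-n\bigr)^+=\langle e,y\rangle^+$. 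I would therefore bound
$$\Babs{\sum_{i\in\cI}\xi_i(\bar\mu_i^n-\bar\gamma_i^n)y_i^{m-1}\tilde q_i^n(x)} \;\le\; C\sum_{i\in\cI}\abs{y_i}^{m-1}\langle e,y\rangle^+ \;\le\; C\sum_{i,j\in\cI}\abs{y_i}^{m-1}\abs{y_j},$$
and then apply Young's inequality $\abs{y_i}^{m-1}\abs{y_j}\le \varepsilon\abs{y_i}^m+C_\varepsilon\abs{y_j}^m$ to absorb this into $\varepsilon f_n(x)$. Choosing $\varepsilon$ small enough and $n_0$ large enough yields the stated inequality $\Bar{\Lg}^{\tilde z^n}_n f_n \le C_1 n^{m\beta} - C_2 f_n$. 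The most delicate point, which parallels the weight-selection argument in \cite{ABP15}, is to verify that a single choice of $\xi=(\xi_1,\dots,\xi_d)$, independent of $n$ and of the identity of the overloaded class $k^\star(x)$, keeps the post-absorption coefficient of $f_n$ strictly negative; the priority identity $\sum_i\tilde q_i^n=\langle e,y\rangle^+$ is the structural key that makes this uniform selection possible.
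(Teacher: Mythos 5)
Your overall architecture is the same as the paper's: binomial expansion of $f_n$, the balance equation $\tilde z^n_i=\tilde x^n_i-\tilde q^n_i+n\rho_i$, the observation that criticality yields $\Bar\lambda^n_i-\Bar\mu^n_i\rho_i n=\order(n^{\beta})$, and Young's inequality for the $\order(n^\beta)$ and lower-order binomial terms; all of that is fine. The gap is precisely in the step that carries the content of the lemma, the queue cross-term $m\sum_i\xi_i(\Bar\mu^n_i-\Bar\gamma^n_i)y_i^{m-1}\tilde q^n_i(x)$. You replace $\tilde q^n_i(x)$ by the aggregate $\langle e,y\rangle^+\le\sum_j\abs{y_j}$ and then invoke $\abs{y_i}^{m-1}\abs{y_j}\le\varepsilon\abs{y_i}^m+C_\varepsilon\abs{y_j}^m$ ``to absorb this into $\varepsilon f_n(x)$''. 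Young's inequality cannot make both coefficients small: $C_\varepsilon\to\infty$ as $\varepsilon\to0$, so what you actually produce is $\varepsilon f_n$ plus a term of order $C_\varepsilon f_n$, which the available negative drift $-m\sum_i\xi_i\Bar\mu^n_i\abs{y_i}^m$ cannot dominate unless the model constants happen to cooperate. Concretely, once you take absolute values the diagonal contribution alone is $m\xi_i\abs{\Bar\mu^n_i-\Bar\gamma^n_i}\abs{y_i}^m$; already for $d=1$ your scheme needs $\abs{\gamma^\pi_1-\mu^\pi_1}<\mu^\pi_1$, and for general $d$, along the direction $y_1=\dots=y_d=t>0$ the weights essentially factor out of both sides, so no choice of $\xi$ rescues the argument when abandonment rates are much larger than service rates --- a perfectly admissible parameter regime. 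The aggregate identity $\sum_i\tilde q^n_i=\langle e,y\rangle^+$, which you call the structural key, has in fact discarded the structure that makes the lemma true for all positive rates.

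What the paper uses instead is the per-class, signed representation coming from the priority rule, its \cref{PL3.1D}: $\tilde q^n_i(x)=\eta_i(x)\tilde x^n_i+\Bar\eta_i(x)\sum_{j<i}\tilde x^n_j$ with $\eta_i,\Bar\eta_i\in[0,1]$. This has two consequences your sketch misses. First, the diagonal part is \emph{not} estimated in absolute value; it is combined with the leading drift to give $-m\xi_i\bigl[(1-\eta_i)\Bar\mu^n_i+\eta_i\Bar\gamma^n_i\bigr]\abs{\tilde x^n_i}^m\le-m\Breve c_2\,\xi_i\abs{\tilde x^n_i}^m$, a convex combination of $\mu$ and $\gamma$ that is uniformly negative no matter how $\gamma$ compares with $\mu$. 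Second, the off-diagonal part couples class $i$ only to the higher-priority classes $j<i$, i.e.\ the coupling is lower-triangular, and this is exactly what permits the recursive weight choice $\xi_1=1$, $\xi_i=(\nicefrac{\epsilon_1}{d})^m\min_{j<i}\xi_j$: the large Young penalty $C_\varepsilon\abs{\tilde x^n_j}^m$ generated by class $i$ carries the tiny weight $\xi_i\ll\xi_j$ and is swallowed by the term $-\Breve c_2\,\xi_j\abs{\tilde x^n_j}^m$ of class $j$. With your symmetric bound the coupling is full, so no triangular weight cascade is possible. To complete the proof you would need to restore the per-class representation and the convex-combination treatment of the diagonal term; as written, the handling of the cross-term fails for general admissible parameters.
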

For a proof of \cref{L3.1}, see  \cref{S5}. 
This lemma shows that, under the static priority policy $\tilde{z}^n$, 
the ``average'' process is geometrically ergodic.

%%%%%%%%%%%%%%%%%%%%%%%%%%%%%%%%%%%%%%%%%%%%%%%%%%%%%%%%%%%%%

\begin{definition}\label{D3.4}
Under a stationary Markov policy $z^n = z^n(x)$, 
the infinitesimal generator of $(X^n(t),J^n(t))$ is defined by
\begin{equation*}
\widetilde{\Lg}^{z^n}_nf(x,k) \;\df\; 
\Lg^{z^n}_{n,k}f(x,k) + \sum_{k'\in\cK} n^{\alpha}q_{kk'}\big(f(x,k') - f(x,k)\big)\,,
\end{equation*}
for $f\in\Cc_b(\RR^d\times\cK)$, where
\begin{multline*}
\Lg^{z^n}_{n,k}f(x,k)\;\df\; \sum_{i\in\cI}\lambda_i^n(k) \big(f(x+ e_i,k) - f(x,k)\big)
+ \sum_{i\in\cI}\mu_i^n(k)z^n_i(x)\big(f(x - e_i,k) - f(x,k)\big) \\
+ \sum_{i\in\cI}\gamma_i^n(k)q^n_i(x,z)\big(f(x - e_i,k) - f(x,k)\big)\,.
\end{multline*}
Let $\Delta{\lambda}^n_i(k) \;\df\;  \Bar{\lambda}^n_i - \lambda^n_i(k)$ for $i\in\cI$ and 
$k\in\cK$, and define $\Delta{\mu}^n_i$ and $\Delta{\gamma}^n_i$, analogously.
Let $\Delta\Lg^{z^n}_{n,k}\colon\Cc_b(\RR^d\times\cK) \mapsto \Cc_b(\RR^d\times\cK)$ be the operator defined by
\begin{multline*}
\Delta\Lg^{z^n}_{n,k}f(x,k)\;\df\; \sum_{i\in\cI}\Delta\lambda_i^n(k) \big(f(x+ e_i,k) - f(x,k)\big)
+ \sum_{i\in\cI}\Delta\mu_i^n(k)z^n_i(x)\big(f(x - e_i,k) - f(x,k)\big) \\
+ \sum_{i\in\cI}\Delta\gamma_i^n(k)q^n_i(x,z)\big(f(x - e_i,k) - f(x,k)\big)\,.
\end{multline*}
\end{definition}
Define the embedding $\mathfrak{M}\colon \Cc_b(\RR^d) \hookrightarrow \Cc_b(\RR^d \times \cK)$
by $\mathfrak{M}(f) = \tilde{f}$, where
$
	\tilde{f}(\cdot,k) = f(\cdot)  
$
for all $k\in\cK$.
It is easy to see, by \cref{D3.3,D3.4}, that for all $f\in\Cc_b(\RR^d)$, $\tilde{f} = \mathfrak{M}(f)$, and $k\in\cK$, we have
\begin{equation}\label{E3.4A}
\Bar{\Lg}^{z^n}_n{f}(x) \;=\;  \Lg^{z^n}_{n,k}\tilde{f}(x,k) 
+ \Delta\Lg^{z^n}_{n,k}\tilde{f}(x,k).
\end{equation}

Abusing the notation, we can identify $\tilde{f} = \mathfrak{M}(f)$ with $f$,
 and thus \cref{E3.4A} can be written as
$$
\Bar{\Lg}^{z^n}_nf(x) \;=\;  \Lg^{z^n}_{n,k}f(x) 
+ \Delta\Lg^{z^n}_{n,k}f(x)\,.
$$
\begin{lemma}\label{L3.2}
Let $f_n(x)$ be the function defined in \cref{EL3.1A}, and $z^n$ be any stationary Markov policy. 
There exists a function $g_n[f_n]\in \Cc(\RR^d \times \cK)$, satisfying
\begin{equation}\label{EL3.2A}
g_n[f_n](x,k) = \frac{1}{n^{\alpha}} \sum_{k'\in\cK}c_{kk'}\Delta\Lg^{z^n}_{n,k'}f_n(x)\,, \quad \forall\;(x,k)\in\RR^d\times\cK
\end{equation}
with some constants $c_{kk'}$ independent of $n$, and
$$
 \sum_{k'\in\cK} n^{\alpha}q_{kk'}\big(g_n[f_n](x,k') - g_n[f_n](x,k)\big) \;=\; \Delta\Lg^{z^n}_{n,k}f_n(x)\,, \quad \forall\;(x,k)\in\RR^d\times\cK\,.
$$
As a consequence, we have, for fixed $\alpha > 0$ and each $n\in\NN$,
\begin{equation}\label{EL3.2B}
\big|g_n[f_n](x,k)\big| \;\le\; C_3(1 + n^{m(1-\alpha)}) + \epsilon_n f_n(x)\,, \quad \forall (x,k)\in\RR^d\times\cK\,,
\end{equation}
where $C_3$ is some positive constant, and $\epsilon_n>0$ can be chosen arbitrarily small for large enough $n$. 
\end{lemma}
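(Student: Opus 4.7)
The plan is to handle the two distinct claims of the lemma separately: first the existence and explicit form of $g_n[f_n]$ (which is a Poisson-equation problem on the finite state space $\cK$), then the growth estimate \cref{EL3.2B}.

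\textbf{Step 1 (Poisson equation).} Setting $h_n(x,k)\df\Delta\Lg^{z^n}_{n,k}f_n(x)$, the second displayed equation in the lemma is $n^\alpha Q\,g_n[f_n](x,\cdot)=h_n(x,\cdot)$. By the Fredholm alternative applied to the irreducible generator $Q$, this has a solution, unique modulo a constant in $k$, if and only if $\sum_{k\in\cK}\pi_kh_n(x,k)=0$. That orthogonality is immediate from the definitions of the $\Delta$-rates: for each $i\in\cI$,
\[
\sum_{k\in\cK}\pi_k\Delta\lambda^n_i(k)\;=\;\Bar{\lambda}^n_i-\Bar{\lambda}^n_i\;=\;0,
\]
and likewise for $\mu$ and $\gamma$. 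Using the deviation matrix $\Upsilon=(\Pi-Q)^{-1}-\Pi$ introduced before \cref{T2.1}, which satisfies $\Pi\Upsilon=0$ and $Q\Upsilon=\Pi-I$, I set $c_{kk'}\df-\Upsilon_{kk'}$ (these depend only on $Q$, hence are $n$-independent) and define $g_n[f_n]$ by \cref{EL3.2A}. A direct verification then yields
\[
n^\alpha Q\,g_n[f_n](x,\cdot)\;=\;-Q\Upsilon\,h_n(x,\cdot)\;=\;(I-\Pi)h_n(x,\cdot)\;=\;h_n(x,\cdot),
\]
the last equality using the orthogonality just established.

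\textbf{Step 2 (growth bound).} A second-order Taylor expansion gives $\bigl|f_n(x\pm e_i)-f_n(x)\bigr|\le C(1+|x_i-\rho_in|^{m-1})$. From \cref{A2.1},
\[
\Delta\lambda^n_i(k)\;=\;n\bigl(\lambda^{\pi}_i-\lambda_i(k)\bigr)+\order(n^\beta)\;=\;\order(n),\qquad \Delta\mu^n_i(k)\;=\;\order(1),\qquad \Delta\gamma^n_i(k)\;=\;\order(1).
\]
Work-conservation yields $z^n_i(x)\le\rho_in+|x_i-\rho_in|$ and, using $\langle e,\rho\rangle=1$, $q^n_i(x,z^n)\le\langle e,x-\rho n\rangle^+\le C|x-\rho n|$. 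Assembling the three sums in \cref{D3.4},
\[
\bigl|\Delta\Lg^{z^n}_{n,k}f_n(x)\bigr|\;\le\;C\bigl(n+n\,|x-\rho n|^{m-1}+|x-\rho n|^m\bigr).
\]
Dividing by $n^\alpha$ and applying Young's inequality with exponents $(m,m/(m-1))$ to the cross term gives $n^{1-\alpha}|x-\rho n|^{m-1}\le C_\epsilon n^{m(1-\alpha)}+\epsilon|x-\rho n|^m$, and combining with the norm equivalence $|x-\rho n|^m\le Cf_n(x)$ produces
\[
\bigl|g_n[f_n](x,k)\bigr|\;\le\;C_\epsilon\bigl(1+n^{m(1-\alpha)}\bigr)+\bigl(C\epsilon+Cn^{-\alpha}\bigr)f_n(x).
\]
Fixing $\epsilon$ small and then letting $n$ be large makes the coefficient of $f_n(x)$ arbitrarily small, giving \cref{EL3.2B}.

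\textbf{Main obstacle.} The delicate point is the accounting in Step 2: the $\order(n)$ contributions coming from $\Delta\lambda$ and from $\Delta\mu\cdot z^n$ only rebalance into $n^{m(1-\alpha)}+\epsilon f_n$ because Young's inequality with exponents $(m,m/(m-1))$ consumes precisely one power of $n$ against the $m-1$ copies of $|x-\rho n|$. The $\gamma$ contribution would produce a spurious factor of $n$ if one bounded $q^n_i$ by $n$ rather than by $C|x-\rho n|$; the sharper work-conservation bound is what keeps that term absorbed into $f_n$ after division by $n^\alpha$.
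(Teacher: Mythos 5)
Your proof is correct and follows essentially the same route as the paper: existence of $g_n[f_n]$ via the Fredholm alternative for the irreducible generator $\cQ$ (the paper simply cites \cite{Kha12} for this), followed by the same Young-inequality accounting that the paper records in \cref{PL3.2A}. Your Step~1 is a welcome explicit version of what the paper leaves implicit: verifying the centering condition $\sum_{k}\pi_k\Delta\Lg^{z^n}_{n,k}f_n(x)=0$ and taking $c_{kk'}=-\Upsilon_{kk'}$ with the deviation matrix $\Upsilon=(\Pi-\cQ)^{-1}-\Pi$ (so that $\cQ\Upsilon=\Pi-I$) is exactly the right construction and is consistent with the form \cref{EL3.2A}.

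One small correction to your closing remark: the sharper work-conservation bound $q^n_i\le\langle e,x-\rho n\rangle^+$ is not needed. The paper bounds crudely $q^n_i\le x_i=\tilde{x}^n_i+n\rho_i$ (and likewise $z^n_i\le x_i$); the resulting worst term $n^{1-\alpha}\abs{\tilde{x}^n_i}^{m-1}$ is absorbed by the very same Young inequality you apply to the $\lambda$ and $\mu$ contributions, and the leftover $n^{-\alpha}\abs{\tilde{x}^n_i}^{m}$ piece is absorbed into $\epsilon_n f_n(x)$ for large $n$. So no spurious factor of $n$ arises from the crude bound; your appeal to work conservation is harmless but not essential to the estimate.
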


\begin{proof}
	The existence of $g_n[f_n](x,k)$ directly follows from the Fredholm alternative. The version applicable here may be found in \cite{Kha12}.

	For $k\in\cK$, we observe that
\begin{align*}
\frac{\abs{\Delta\Lg^{z^n}_{n,k}f_n(x)}}{n^{\alpha}} \;\le&\; 
\frac{1}{n^{\alpha}}\sum_{i \in \cI}\xi_i
\babs{\Delta{\lambda}_i^n(k) - \Delta{\mu}_i^n(k)z^n_i - \Delta{\gamma}_i^n(k)q^n_i}
\babs{m(\tilde{x}^n_i)^{m-1} + \order\bigl(\abs{\tilde{x}^n_i}^{m-2}\bigr)} \\
\;\le&\;\frac{1}{n^{\alpha}}\sum_{i \in \cI}\xi_i
\Big(\abs{\Delta{\lambda}_i^n(k)} + \abs{\Delta{\mu}_i^n(k)}x_i + \abs{\Delta{\gamma}_i^n(k)}x_i \Big)
\babs{m(\tilde{x}^n_i)^{m-1} + \order\bigl(\abs{\tilde{x}^n_i}^{m-2}\bigr)} \\
\;\le&\; C_4\bigl(1 + n^{m(1-\alpha)}\bigr) + \epsilon_n f_n(x)\,,
\end{align*}
where $C_4$ is some positive constant, and 
the last inequality follows by using $ x_i  = \tilde{x}_i^n + n\rho_i$, \cref{A2.1}, and
following inequalities with sufficiently small $\epsilon > 0$:
\begin{align}\label{PL3.2A}
\begin{aligned}
n^{-\alpha}\abs{\Delta{\lambda}_i^n(k)}\abs{\tilde{x}^n_i}^{m-1}
&\;\le\; \epsilon^{1-m}n^{m(1-\alpha)} + \epsilon\abs{\tilde{x}^n_i}^{m}\,,\\
n^{1 - \alpha}\abs{\tilde{x}^n_i}^{m-1} 
&\;\le\; \epsilon^{1-m} n^{m(1-\alpha)} + \epsilon\abs{\tilde{x}^n_i}^{m}\,, \\
\order(n^{1-\alpha})\order(\abs{\tilde{x}^n_i}^{m-2}) &\;\le\;
\epsilon^{1-\nicefrac{m}{2}}n^{\nicefrac{m(1-\alpha)}{2}}  +  \epsilon\big(\order(\abs{\tilde{x}^n_i}^{m-2})\big)^{\nicefrac{m}{m-2}}\,. 
\end{aligned}
\end{align}
Note that when $\alpha > 1$, $n^{m(1-\alpha)} \le 1$\,. 
Thus, by the expression of $g_n[f_n]$ in \cref{EL3.2A}, we obtain \cref{EL3.2B}.
This completes the proof.
\end{proof}

For each $n$, define the function $\Hat{f}_n\in\Cc(\RR^d\times\cK)$ by
$$
 \Hat{f}_n(x,k) \;\df\; f_n(x) + g_n[f_n](x,k)\,.
$$
The norm-like function $\cV_{m,\xi}$ is defined by $\cV_{m,\xi}(x) \df
\sum_{i\in\cI}\xi_i\abs{x_i}^m$ for $x\in\RR^d$, with $m>0$ and a
positive vector $\xi$ defined in \cref{EL3.1A}.
Let $\widehat{\cL}^{z^n}_n$ be the generator of $(\Hat{X}^n,J^n)$.
Under any stationary Markov policy, by using $\Hat{x}^n$ in \cref{D3.2},
we can write $\widehat{\cL}^{z^n}_n$ as 
$$
\left[\widehat{\cL}^{z^n}_nf(\cdot,\cdot)\right](\Hat{x}^n(x),k) \;=\; \left[\widetilde{\cL}^{z^n}_nf(\Hat{x}^n(\cdot),\cdot)\right](x,k) 
$$
for $f\in\Cc_b(\RR^d\times\cK)$, and define the operator 
$\Delta\widehat{\cL}^{z^n}_{n,k} \colon \Cc_b(\RR^d\times\cK)
\mapsto \Cc_b(\RR^d \times \cK)$ satisfying
$$
\left[\Delta\widehat{\cL}^{z^n}_{n,k}f(\cdot,\cdot)\right](\Hat{x}^n(x),k) 
\;=\; \left[\Delta\cL^{z^n}_{n,k}f(\Hat{x}^n(\cdot),\cdot)\right](x,k)
$$ 
for $f\in\Cc_b(\RR^d\times\cK)$. Let $\widehat{\cV}_{m,\xi}$ be the function defined by
\begin{equation*}%\label{E3.1B}
\widehat{\cV}^n_{m,\xi}(x,k) \;\df\;  \cV_{m,\xi}(x) + \frac{1}{n^{\alpha}}\sum_{k'\in\cK}c_{kk'}\Delta\widehat{\cL}^{{z^n}}_{n,k'}\cV_{m,\xi}(x)
\end{equation*}
for $x\in\RR^d$ with the constants $c_{kk'}$ defined in \cref{EL3.2A}.

\begin{theorem}\label{T3.1}
	Let $\widehat{\cL}^{\tilde{z}^n}_n$ denote the generator of the $(\Hat{X}^n,J^n)$ 
	under the scheduling policy defined in \cref{D3.1}. 
	For any even integer $m\ge2$, there exists $n_2\in\NN$ such that
	\begin{equation}\label{ET3.1B}		
			\widehat{\Lg}^{\tilde{z}^n}_n \widehat{\cV}^n_{m,\xi}(\Hat{x},k) \;\le\; \widetilde{C}_1 -  \widetilde{C}_2 \widehat{\cV}^n_{m,\xi}(\Hat{x},k) \,,
			\quad \forall (\Hat{x},k)\in\fX^n\times\cK\,,\quad \forall n> n_2\,,
	\end{equation}
	for some positive constants $\widetilde{C}_1$, $\widetilde{C}_2$ and $n_2 \ge n_0$ depending on $\xi$ and $m$.
	As a consequence, $(\Hat{X}^n,J^n)$ under the the scheduling policy $\tilde{z}^n$ is geometrically ergodic, and for any $m>0$,
	\begin{equation}\label{ET3.1C}	
	\sup_{n\ge n_2}\limsup_{T\rightarrow\infty}\frac{1}{T}\Exp^{\tilde{z}^n}
	\left[\int_0^T\abs{\Hat{X}^n(s)}^m\,\D{s}\right] \;<\; \infty\,.
	\end{equation}
\end{theorem}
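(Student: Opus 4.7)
The plan is to follow Khasminskii's perturbed Lyapunov strategy for rapidly switching processes: use the Poisson correction $g_n[f_n]$ from \cref{L3.2} to lift the averaged drift estimate of \cref{L3.1} into a joint drift inequality for $\widetilde{\cL}^{\tilde{z}^n}_n$. Since $\widehat{\cL}^{\tilde{z}^n}_n\widehat{\cV}^n_{m,\xi}(\Hat{x}^n(x),k) = n^{-m\beta}\,\widetilde{\cL}^{\tilde{z}^n}_n \Hat{f}_n(x,k)$ with $\Hat{f}_n = f_n + g_n[f_n]$, proving \cref{ET3.1B} is equivalent to establishing the unscaled inequality
\begin{equation*}
\widetilde{\cL}^{\tilde{z}^n}_n \Hat{f}_n(x,k) \;\le\; \Tilde{C}_1'\,n^{m\beta} - \Tilde{C}_2'\,\Hat{f}_n(x,k)\,, \qquad (x,k)\in\ZZ^d_+\times\cK\,,
\end{equation*}
with constants independent of $n$, for all $n$ large enough.

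Expanding $\widetilde{\cL}^{\tilde{z}^n}_n = \Lg^{\tilde{z}^n}_{n,k} + n^{\alpha}\cQ$, noting that $\cQ f_n=0$ (since $f_n$ does not depend on $k$), and using the defining Poisson relation $n^{\alpha}\cQ g_n[f_n](x,k) = \Delta\Lg^{\tilde{z}^n}_{n,k}f_n(x)$ together with the decomposition $\Bar{\Lg}^{\tilde{z}^n}_n = \Lg^{\tilde{z}^n}_{n,k} + \Delta\Lg^{\tilde{z}^n}_{n,k}$ in \cref{E3.4A}, one obtains the key identity
\begin{equation*}
\widetilde{\cL}^{\tilde{z}^n}_n \Hat{f}_n(x,k) \;=\; \Bar{\Lg}^{\tilde{z}^n}_n f_n(x) \;+\; \Lg^{\tilde{z}^n}_{n,k}g_n[f_n](x,k)\,.
\end{equation*}
\cref{L3.1} bounds the first summand by $C_1 n^{m\beta} - C_2 f_n(x)$, so the task reduces to absorbing the perturbation $\Lg^{\tilde{z}^n}_{n,k}g_n[f_n]$ into the good drift $-C_2 f_n$ up to an $O(n^{m\beta})$ additive error. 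Since $g_n[f_n](x,k)$ is, by construction, a polynomial in $\tilde{x}=x-\rho n$ of degree $m-1$ with leading coefficient of order $n^{1-\alpha}$, its first-order differences are $O(n^{1-\alpha}|\tilde{x}|^{m-2})$; paired with the jump rates of $\Lg^{\tilde{z}^n}_{n,k}$ (arrival/service of order $n$, abandonment of order $|\tilde{x}|$), this gives
\begin{equation*}
\bigl|\Lg^{\tilde{z}^n}_{n,k}g_n[f_n](x,k)\bigr| \;\le\; C\bigl(n^{2-\alpha}|\tilde{x}|^{m-2} + n^{1-\alpha}|\tilde{x}|^{m-1}\bigr)\,.
\end{equation*}
Applying Young's inequality with exponent pairs $(m/(m-2),m/2)$ and $(m/(m-1),m)$, and using $\beta\ge 1-\nicefrac{\alpha}{2}\ge 1-\alpha$, each term is dominated by $\epsilon_n f_n(x) + C'\, n^{m\beta}$ for a sequence $\epsilon_n\to 0$.

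Combining the two estimates, choosing $n$ so that $\epsilon_n < C_2/2$, yields $\widetilde{\cL}^{\tilde{z}^n}_n \Hat{f}_n(x,k)\le C''\,n^{m\beta} - (C_2/2) f_n(x)$. To convert $f_n$ into $\Hat{f}_n$ on the right-hand side, I invoke \cref{EL3.2B}: since $n^{m(1-\alpha)}\le n^{m\beta}$, it gives $f_n(x)\ge (1+\epsilon_n)^{-1}\bigl[\Hat{f}_n(x,k) - C_3(1+n^{m\beta})\bigr]$, producing the desired drift for $\Hat{f}_n$ and hence \cref{ET3.1B}. Geometric ergodicity of $(\Hat{X}^n,J^n)$ under $\tilde{z}^n$ then follows from standard Meyn--Tweedie theory: the Markov chain is $\phi$-irreducible with bounded jump increments, so every compact subset of $\fX^n\times\cK$ is petite and the drift \cref{ET3.1B} together with petiteness yields geometric ergodicity with Lyapunov function $\widehat{\cV}^n_{m,\xi}$. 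For the uniform moment bound \cref{ET3.1C}, the same estimate $|g_n[f_n]|\le C_3(1+n^{m\beta}) + \epsilon_n f_n$ (rescaled) implies $\widehat{\cV}^n_{m,\xi}(\Hat{x},k)\ge c|\Hat{x}|^m - c'$ with $c,c'$ independent of $n$ for $n\ge n_2$; integrating \cref{ET3.1B} in time and dividing by $T$ yields the bound for every even integer $m\ge 2$ uniformly in $n$, and then Jensen/interpolation covers all $m>0$.

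The principal obstacle is the sharpness of the perturbation bound on $\Lg^{\tilde{z}^n}_{n,k}g_n[f_n]$: the queueing generator has rates of order $n$, while the Poisson corrector $g_n[f_n]$ only gains a factor $n^{-\alpha}$, giving a borderline $n^{2-\alpha}|\tilde{x}|^{m-2}$ term. It is precisely the choice $\beta = \max\{\nicefrac{1}{2},\,1-\nicefrac{\alpha}{2}\}$ that guarantees $2-\alpha\le 2\beta$, so Young's inequality can swap this into $\epsilon f_n + O(n^{m\beta})$; the argument would break for any smaller $\beta$.
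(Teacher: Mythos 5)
Your proposal is correct and follows essentially the same route as the paper's proof: the same perturbed Lyapunov function $\Hat{f}_n=f_n+g_n[f_n]$, the same key identity $\widetilde{\Lg}^{\tilde{z}^n}_n\Hat{f}_n=\Bar{\Lg}^{\tilde{z}^n}_nf_n+\Lg^{\tilde{z}^n}_{n,k}g_n[f_n]$ via \cref{E3.4A} and the Poisson relation of \cref{L3.2}, the drift from \cref{L3.1}, Young's inequality with the critical exponent relation $2-\alpha\le 2\beta$, and the same integration of the drift plus \cref{EL3.2B} for \cref{ET3.1C}. The only imprecision is your description of $g_n[f_n]$ as a degree-$(m-1)$ polynomial with coefficient $O(n^{1-\alpha})$: it also contains piecewise degree-$m$ pieces with coefficient $O(n^{-\alpha})$ (from the $\Delta\mu^n_i\tilde{z}^n_i$ and $\Delta\gamma^n_i\tilde{q}^n_i$ terms), so the perturbation bound acquires an extra $O(n^{-\alpha}\abs{\tilde{x}}^m)$ term and relies on the structural facts that $\abs{\tilde{q}^n(x\pm e_i)-\tilde{q}^n(x)}\le 1$ and $\tilde{q}^n(x)=\order(\norm{\tilde{x}^n})$ under the priority policy (the paper's $h_n$, $F^{(1)}$, $F^{(2)}$ computation), but these are exactly absorbable into $\epsilon_n f_n+\order(n^{m\beta})$, so the conclusion is unaffected.
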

\begin{proof}
Since  operators defined in \cref{D3.3,D3.4} are linear, 
we have
$$
\widehat{\cV}^n_{m,\xi}(\Hat{x}^n(x),k) \;=\; n^{-m\beta}\Hat{f}_n(x,k)
$$
for $x\in\ZZ^d_+$ and $k\in\cK$.
Thus, it suffices to show that
\begin{equation}\label{ET3.1A}
	\widetilde{\Lg}^{\tilde{z}^n}_n \Hat{f}_{n}(x,k) \;\le\; \widetilde{C}_1 n^{m\beta} - \widetilde{C}_2 \Hat{f}_n(x,k) \,,
	\quad \forall {(x,k)}\in\ZZ^d_+ \times \cK\,, \quad \forall n\ge n_2\,.
\end{equation}
Let $\xi$ be the vector in \cref{EL3.1A}. 
It is easy to see that
\begin{align}\label{PT3.1A}
\begin{aligned}
	\widetilde{\Lg}^{\tilde{z}^n}_n \Hat{f}_{n}(x,k) 
	&\;=\; \Bar{\cL}^{\tilde{z}_n}_{n,k}f_n(x) + \Lg^{\tilde{z}_n}_{n,k}g_n[f_n](x,k) \\
	&\;\le\; C_1n^{m\beta} - C_2f_n(x) + \Lg^{\tilde{z}_n}_{n,k}g_n[f_n](x,k)\,,
	\quad \forall n  \ge n_0\,,
\end{aligned}
\end{align}
where the inequality follows from \cref{L3.1}.
Applying \cref{L3.2}, we see that there exist
positive constants $C_6$,  $C_7$, and $\tilde{n}_1$,  such that 
\begin{align}\label{PT3.1}
	C_7n^{m\beta} - C_6\Hat{f}_n(x,k) &\;\ge C_1 n^{m\beta} - C_2f_n(x)\,,\quad \forall\; n>\tilde{n}_1\,, \quad \forall (x,k)\in\RR^d\times\cK\,.
\end{align}
Thus, to prove \cref{ET3.1A}, by using \cref{PT3.1,PT3.1A}, 
it suffices to show that, for large enough $n$, 
\begin{equation}\label{PT3.1B}
\Lg^{\tilde{z}_n}_{n,k}g_n[f_n](x,k) \;\le\; C_8n^{m\beta} + \epsilon f_n(x)\,,
\end{equation}
where $C_8$ is some positive constant, and $\epsilon > 0$ can be chosen arbitrarily small for large enough $n$.
Recall the definition of $g_n[f_n]$ in \cref{EL3.2A}, and observe that
\begin{align*}
	\frac{\Delta{\Lg}^{\tilde{z}_n}_{n,k}f_n(x)}{n^{\alpha}} \;=\; \sum_{i \in \cI}\frac{\xi_i}{n^{\alpha}}
	\Big(\Delta{\lambda}_i^n(k) - \Delta{\mu}_i^n(k) \tilde{z}^n_i(x) - 
	\Delta{\gamma}_i^n(k)\tilde{q}^n_i(x) \Big)
	\Big(m(\tilde{x}^n_i)^{m-1} + \order(\abs{\tilde{x}^n_i}^{m-2})\Big)\,.
\end{align*}
Let 
$$
	h_n(x) \;\df\; \frac{1}{n^{\alpha}}\sum_{i\in\cI}\xi_i\tilde{q}^n_i(x)(\tilde{x}^n_i)^{m-1}\,.
$$
Note that in order to prove \cref{PT3.1B}, by using \cref{PL3.2A} and the balance equation $\tilde{z}^n_i(x) =  \tilde{x}^n_i(x) - \tilde{q}^n_i(x) + n\rho_i$, 
we only need to show that
\begin{equation}\label{PT3.1C}
\Lg^{\tilde{z}_n}_{n,k}  h_n(x) \;\le\; C_{9}n^{m\beta} + \epsilon f_n(x)\,,
\end{equation}
where $C_{9}$ is some positive constant, and $\epsilon>0$ can be chosen arbitrarily small for all large enough $n$;  the  other terms in 
$\Lg^{\tilde{z}_n}_{n,k}g_n[f_n]$ can be treated similarly. We obtain
\begin{align*}
\begin{aligned}
\Lg^{\tilde{z}_n}_{n,k}  h_n(x) \;=\; 
\sum_{i\in\cI}\Big(F^{(1)}_{n,i}(x) + F^{(2)}_{n,i}(x)\Big)\,, 
\end{aligned}
\end{align*}
where
\begin{multline*}
	F^{(1)}_{n,i}(x) \;\df\; n^{-\alpha}\xi_i\lambda^n_i(k)	\big(\tilde{q}_i^n(x+e_i)(\tilde{x}^n_i+1)^{m-1} -\tilde{q}^n_i(x)(\tilde{x}^n_i)^{m-1}\big)\\ +n^{-\alpha}\xi_i\big(\mu^n_i(k)\tilde{z}^n_i(x) + \gamma^n_i(k)\tilde{q}^n_i(x)\big)\big(\tilde{q}_i^n(x-e_i)(\tilde{x}^n_i-1)^{m-1}  -\tilde{q}^n_i(x)(\tilde{x}^n_i)^{m-1}\big)\,,
\end{multline*}
and
\begin{multline*}
F^{(2)}_{n,i}(x) \;\df\; n^{-\alpha}\sum_{j\neq i}\big[\lambda^n_i(k)\xi_j	\big(\tilde{q}_j^n(x+e_i) -\tilde{q}^n_j(x)\big)(\tilde{x}^n_j)^{m-1}\\ +\big(\mu^n_i(k)\tilde{z}^n_i(x) + \gamma^n_i(k)\tilde{q}^n_i(x)\big)\xi_j\big(\tilde{q}_j^n(x-e_i) -\tilde{q}^n_j(x)\big)(\tilde{x}^n_j)^{m-1}\big]\,.
\end{multline*}
Note that for $i'\in\cI$,
$$
\abs{\tilde{q}^n_{i'}(x \pm e_i) - \tilde{q}^n_{i'}(x)} \;\le\; 1 \,,
$$
and $\tilde{q}^n_{i'}(x )$ is the unscaled queuing process.
We first consider $F^{(1)}_{n,i}(x)$. We have
\begin{multline*}
\sum_{i\in\cI}F^{(1)}_{n,i}(x) \;\le\;  n^{-\alpha}\sum_{i\in\cI}\Big[\xi_i\lambda^n_i(k)\Big(\tilde{q}_i^n(x+e_i)\big((\tilde{x}^n_i+1)^{m-1}- (\tilde{x}^n_i)^{m-1}\big)  + \abs{\tilde{x}^n_i}^{m-1}\Big) \\
+ \xi_i \Big(\mu^n_i(k)(\tilde{x}^n_i + n\rho_i - \tilde{q}^n_i(x)) + \gamma^n_i(k)\tilde{q}^n_i(x)\Big)
\Big(\tilde{q}_i^n(x-e_i)\big((\tilde{x}^n_i-1)^{m-1}- (\tilde{x}^n_i)^{m-1}\big)  + \abs{\tilde{x}^n_i}^{m-1}\Big)\Big] \,. 
\end{multline*}
Note that 
$$
\left[n\rho_i - \Biggl(n - \sum_{j=1}^{i-1}n\rho_j\Biggr)^+\right]^+ \;=\; 0 \,.
$$
By using the fact that
$
a^+ - b^+ =  \eta(a - b)
$,
for $a,b\in\RR^d$ and $\eta \in [0,1]$, we have
\begin{align}\label{PL3.1D}
\begin{aligned}
\tilde{q}^n_i(x) 
&\;=\; -\left[n\rho_i - \Biggl(n - \sum_{j=1}^{i-1}n\rho_i\Biggr)^+\right]^+ + \left[x_i - \left(n - \sum_{i'=1}^{i-1}x_{i'}\right)^+ \right]^+ \\
&\;=\; -\eta_i(x)(n\rho_i - x_i) + \Bar{\eta}_i(x)\sum_{j=1}^{i-1}(x_j - n\rho_j) \\
&\;=\; \eta_i(x)\tilde{x}^n_i + \bar{\eta}_i(x)\sum_{j=1}^{i-1}\tilde{x}^n_j
\quad \forall x\in\RR^d\,,
\end{aligned} 
\end{align}
for the mappings $\eta_i,\bar{\eta}_i\colon \RR^d\mapsto [0,1]^d$.
By using \cref{PL3.1D} and Young's inequality, we have
\begin{align*}
\tilde{q}_i^n(x\pm e_i)\big((\tilde{x}^n_i\pm 1)^{m-1}- (\tilde{x}^n_i)^{m-1}\big)
&\;\le\; \order\big(\abs{\tilde{x}^n_i}^{m-1}\big) + \sum_{i'=1}^{i-1}\order\big(\abs{\tilde{x}^n_{i'}}^{m-1}\big) \,,\\
\tilde{q}^n_i(x)\tilde{q}_i^n(x - e_i)\big((\tilde{x}^n_i- 1)^{m-1}- (\tilde{x}^n_i)^{m-1}\big)
&\;\le\; \order\big(\abs{\tilde{x}^n_i}^{m}\big) + \sum_{i'=1}^{i-1}\order\big(\abs{\tilde{x}^n_{i'}}^{m}\big)\,.
\end{align*}
Therefore, applying inequalities in \cref{PL3.2A}, we obtain
\begin{equation}\label{PT3.1D}
\sum_{i\in\cI}F^{(1)}_{n,i}(x) \;\le\; C_{10}n^{m(1-\alpha)} + \epsilon f_n(x)\,.
\end{equation}
where $C_{10}$ is some positive constant, and $\epsilon$ can be chosen
arbitrarily small for large enough $n$.
On the other hand, 
since
$
 \tilde{z}^n_{i'}(x)\le x_{i'},$ and $
\tilde{q}^n_{i'}(x)\le x_{i'}
$ for $i'\in\cI$,
 applying Young's inequality, we obtain 
\begin{align}\label{PT3.1E}
\begin{aligned}
F^{(2)}_{n,i}(x) &\;\le\;n^{-\alpha}\sum_{j\neq i}\xi_j\big(\abs{\lambda^n_i(k)} + (\abs{\mu^n_i(k)}+ \abs{\gamma^n_i(k)})(\tilde{x}^n_i + n\rho_i)\big)\abs{\tilde{x}^n_j}^{m-1} \\
&\;\le\; C_{11}n^{m(1-\alpha)} + \epsilon f_n(x)\,,
\end{aligned} 
\end{align}
where $C_{11}$ is some positive constant, and $\epsilon$ can be chosen arbitrarily small for large enough $n$, and the second inequality follows from \cref{PL3.2A}. 
By \cref{L3.2}, there exists  $n_1>0$ such that for all $\Hat{x} \in \fX^n$, $k\in\cK$ and $n>n_1$, we have 
\begin{equation}\label{PT3.1F}
\widehat{\cV}^n_{m,\xi}(\Hat{x},k) \;\ge\; 
\frac{1}{2}\cV_{m,\xi}(\Hat{x}) + \sorder(1).
\end{equation}
We choose $n_2\in\NN$ satisfying $n_2 \ge \max\{n_0,\tilde{n}_1,n_1\}$. Thus, since $1-\alpha < \beta$, by \cref{PT3.1D,PT3.1E} we have shown \cref{PT3.1C}. As a result, we have proved \cref{ET3.1A}, which implies \cref{ET3.1B}.

Let $\Exp^{\tilde{z}_n} = \Exp$.
By It\^o's formula, we obtain
\begin{align*}
\Exp\left[\widehat{\cV}^n_{m,\xi}(\Hat{X}^n(T),J^n(T))\right] -  \Exp\left[\widehat{\cV}^n_{m,k}(\Hat{X}^n(0),J^n(0))\right]
&\;=\; \Exp\left[\int_{0}^{T}\widehat{\cL}^{\tilde{z}^n}_n\widehat{\cV}^n_{m,\xi}(\Hat{X}^n(s),J^n(s))\,\D{s}\right]\,.
\end{align*}
Then, using \cref{ET3.1B}, we have, for $\forall n\ge n_2$,
\begin{equation}\label{PT3.1G}
-\Exp\left[\widehat{\cV}^n_{m,k}(\Hat{X}^n(0),J^n(0))\right] \;\le\; \widetilde{C}_1T - \widetilde{C}_2\Exp\left[\int_{0}^{T}\widehat{\cV}^n_{m,\xi}(\Hat{X}^n(s),J^n(s))\,\D{s}\right]\,.
\end{equation}
Applying \cref{PT3.1F,PT3.1G},
we obtain that, for some positive constant $C_{12}$, and for all $n \ge n_2$,
$$
\frac{1}{T} \Exp\left[\int_{0}^{T} \sum_{i\in\cI}\abs{\Hat{X}^n_i(s)}^m \,\D{s}\right] \le C_{12}\,.
$$
This proves  \cref{ET3.1C}.
\end{proof}
\begin{definition}\label{D3.5}
	Let  $ \upomega \colon 
	%\{x\in\RR^d_+ \colon \langle e,x \rangle \in\ZZ \} 
	\RR^d_+ \mapsto \ZZ^d_+$ 
	be a measurable map defined by
	$$\upomega(x) \;\df\; \left(\lfloor x_1\rfloor,\dots,\lfloor x_{d-1}\rfloor, 
	\lfloor x_{d}  \rfloor + \sum_{i=1}^d(x_i - \lfloor x_i \rfloor)\right)\,.$$
\end{definition}	

\begin{definition}\label{D3.6}
	Let  $v^n : \RR^d  \mapsto \Act$  be any sequence of functions 
	satisfying $v^n(\Hat{x}^n(x))= e_d$, for all $x \notin \fA^n $,
	and such that $x\mapsto v^n(\Hat{x}^n(x))$ is continuous.
	Define 
	\[q^n[v^n](x) \;\df\;
	\begin{cases} 	
	\upomega \big((\langle e,x\rangle - n)^+{v}^n(\Hat{x}^n(x))\big) 
	\quad &\text{for } \sup_{i\in\cI}|x_i-n\rho_i| \le \kappa n \,,\\
	\tilde{q}^{n}(x) \;	&\text{for } \sup_{i\in\cI}|x_i-n\rho_i| > \kappa n \,, 
	\end{cases}
	\]	
	where $\tilde{q}^{n}(x)$ is as in \cref{D3.1}, and $\kappa < \inf_{i\in\cI}\{\rho_i\}$. Define the admissible scheduling policy 
	$$z^n[v^n](x) \;\df\; x- q^n[v^n](x)\,.$$
\end{definition}

We have the following lemma on stabilization of the diffusion-scaled queueing processes. 
\begin{lemma}\label{L3.3}
	For the scheduling policy $z^n$ in \cref{D3.6},
	% say $(\Hat{X}^n,J^n)$ be the process under the scheduling $z^n$. 
	let $\widehat{\Lg}^{z^n}_n$ denote the infinitesimal generator of $(\Hat{X}^n,J^n)$.  Then, the analogous Foster--Lyapunov equation as in \cref{T3.1} holds.
\end{lemma}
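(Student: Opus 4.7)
The plan is to exploit the fact that $z^n[v^n]$ coincides with the static priority policy $\tilde{z}^n$ of \cref{D3.1} on the complement of the scaled ball $\fA^n$, so that \cref{T3.1} applies there verbatim, and to verify a uniform generator bound inside.

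\textbf{Step 1 (coincidence off $\fA^n$).} I would partition the complement of $\fA^n$ into the ``far'' piece $\{\sup_i |x_i-n\rho_i|>\kappa n\}$ and the ``middle'' annulus $\{\|x-\rho n\|>c_0n^\beta\}\cap\{\sup_i|x_i-n\rho_i|\le\kappa n\}$. In the far piece, the second branch of \cref{D3.6} gives $z^n[v^n]=\tilde{z}^n$ directly. On the middle annulus the assumption $v^n(\Hat{x}^n(x))=e_d$ combined with the integrality of $x$ yields $q^n[v^n](x)=\upomega((\langle e,x\rangle-n)^+e_d)=(0,\dots,0,(\langle e,x\rangle-n)^+)$; if $\kappa$ is tightened (from $\kappa<\inf_i\rho_i$) so that $(d-1)\kappa<\rho_d$, we obtain $\sum_{i<d}x_i<n$ throughout, whence the formula in \cref{D3.1} gives $\tilde{q}^n$ the same value. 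Thus $z^n[v^n]\equiv \tilde{z}^n$ off $\fA^n$, and \cref{ET3.1B} holds on $\fX^n\setminus B_{c_0}$ with the constants of \cref{T3.1}.

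\textbf{Step 2 (interior bound).} On $\Hat{x}\in B_{c_0}$, the function $\widehat{\cV}^n_{m,\xi}(\Hat{x},k)$ is bounded uniformly in $n$ and $k$, by $\cV_{m,\xi}(\Hat{x})\le\sum_i \xi_i c_0^m$ together with \cref{EL3.2B}. Using the decomposition
\[
\widetilde{\Lg}^{z^n}_n\Hat{f}_n(x,k)\;=\;\Bar{\Lg}^{z^n}_n f_n(x)+\Lg^{z^n}_{n,k}g_n[f_n](x,k)
\]
from the proof of \cref{T3.1}, I would bound each piece uniformly for $x\in\fA^n$. For $\Bar{\Lg}^{z^n}_n f_n$, the critical loading assumption and \cref{A2.1} yield an averaged drift $\Bar{\lambda}^n_i - \Bar{\mu}^n_i z^n_i - \Bar{\gamma}^n_i q^n_i = O(n^\beta)$; since $|\partial^j f_n|=O(n^{(m-j)\beta})$ in $\fA^n$ and the transition rates are $O(n)$, with $\beta\ge\nicefrac{1}{2}$ implying $1+(m-2)\beta\le m\beta$, both the first- and second-order contributions come out to $O(n^{m\beta})$. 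The analogous bound on $\Lg^{z^n}_{n,k}g_n[f_n]$ follows from \cref{L3.2} and the observation that the single-jump differences $q^n[v^n](x\pm e_j)-q^n[v^n](x)$ remain $O(1)$ uniformly in $x\in\fA^n$, thanks to the essentially $1$-Lipschitz behavior of $\upomega$ combined with the uniform continuity of $x\mapsto v^n(\Hat{x}^n(x))$ on the compact set $B_{c_0}$ and the single-jump displacement $|\Hat{x}^n(x\pm e_j)-\Hat{x}^n(x)|=n^{-\beta}$. Dividing by $n^{m\beta}$ produces a uniform constant $M$ with $\widehat{\cL}^{z^n}_n\widehat{\cV}^n_{m,\xi}(\Hat{x},k)\le M$ on $B_{c_0}\times\cK$.

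\textbf{Conclusion and obstacle.} Combining the two estimates, one absorbs $M+\widetilde{C}_2\sup_{B_{c_0}\times\cK}\widehat{\cV}^n_{m,\xi}$ into an enlarged $\widetilde{C}_1$, thereby extending \cref{ET3.1B} to all of $\fX^n\times\cK$; the It\^o computation at the end of the proof of \cref{T3.1} then yields the geometric ergodicity and the moment estimate analogous to \cref{ET3.1C}. The principal obstacle is the interior estimate on $\Lg^{z^n}_{n,k}g_n[f_n]$: unlike for $\tilde{q}^n$, the new queue $q^n[v^n]$ is built by rounding a product involving a general continuous $v^n$, so the telescoping argument for $h_n$ in the proof of \cref{T3.1}---which used the exact form of $\tilde{q}^n$---must be revisited, relying only on the structural properties of $\upomega$ and the uniform continuity of $v^n$ on $B_{c_0}$.
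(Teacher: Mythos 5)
Your localization strategy is genuinely different from the paper's argument, and as written it has two real gaps. First, the coincidence claim of Step 1 is not available for the policy of \cref{D3.6} as stated: on the middle annulus you need $\sum_{i<d}x_i<n$, which forces $(d-1)\kappa<\rho_d$, a strictly stronger requirement than the condition $\kappa<\inf_{i}\rho_i$ in \cref{D3.6}. Without tightening $\kappa$ there is a region (unbounded in the scaled variable, where $f_n(x)\asymp(\kappa n)^m$, hence $\widehat{\cV}^n_{m,\xi}\to\infty$) on which $q^n[v^n]$ places the entire queue in class $d$ while $\tilde{q}^n$ queues lower-indexed classes; your proof gives no drift estimate there, and it cannot be absorbed into the constant as in Step 2. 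Second, and more seriously, the Lyapunov function is itself policy-dependent: the corrector $g_n[f_n](x,k)=n^{-\alpha}\sum_{k'}c_{kk'}\Delta\Lg^{z^n}_{n,k'}f_n(x)$ involves $z^n_i(x)$ and $q^n_i(x)$. Hence \cref{ET3.1B} does not transfer ``with the constants of \cref{T3.1}'' to $\fX^n\setminus B_{c_0}$ merely because the policies coincide there: at points $x\notin\fA^n$ with neighbours $x\pm e_i\in\fA^n$, the generator evaluates the corrector at those neighbours, where the correctors built from $z^n[v^n]$ and from $\tilde{z}^n$ differ by as much as $\order(n^{-\alpha})\cdot\order(n^{\beta})\cdot\order(n^{\beta(m-1)})$; multiplied by the $\order(n)$ transition rates this contributes $\order(n^{1+m\beta-\alpha})$, which is not dominated by the available drift $-C_2f_n(x)=\order(n^{m\beta})$ when $\alpha<1$, and taming it would need quantitative (Lipschitz-type) control of $v^n$ that \cref{D3.6} does not supply. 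If instead you intend to use the $\tilde{z}^n$-corrector outside $\fA^n$ and the $z^n[v^n]$-corrector inside, you no longer have a single function satisfying a Foster--Lyapunov inequality, which is what the Dynkin/It\^o step requires to conclude the analogue of \cref{ET3.1C}.

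The paper argues globally and avoids all of this: it observes that the proofs of \cref{L3.1} and \cref{T3.1} use only two structural features of the queue map, namely (i) $\abs{q^n[v^n](x\pm e_i)-q^n[v^n](x)}\le C$, and (ii) the representation $q^n_i[v^n](x)=\epsilon^n_i(x)(x_i-n\rho_i)+\tilde{\epsilon}^n_i(x)\sum_{i'=1}^{i-1}(x_{i'}-n\rho_{i'})+\order(n^{\beta})$ with $[0,1]$-valued coefficients (the analogue of \cref{PL3.1D}), verifies these for $q^n[v^n]$ on the whole state space, and then repeats the proof of \cref{T3.1} verbatim --- no case distinction, no boundary seam, and no restriction on $\kappa$ beyond \cref{D3.6}. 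Your Step 2 essentially reproduces those estimates on $\fA^n$, so the natural repair is to run them everywhere via (i)--(ii) rather than splitting the space. Note also that deducing the $\order(1)$ increment bound from mere uniform continuity of $x\mapsto v^n(\Hat{x}^n(x))$ is not sufficient by itself, since one needs $n^{\beta}\,\omega(n^{-\beta})$ bounded for the relevant modulus of continuity $\omega$; this caveat, however, applies equally to property (i) as asserted in the paper.
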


\begin{proof}
%	By definition, there exists a sequence of  functions $v^n : \RR^d  \mapsto \Act$ satisfying $v^n(\Hat{x}^n(x))= e_d$, for all $x \notin \fA^n $, and $v^n(\Hat{x}^n(x))$ is continuous, such that the scheduling policy  $z^n$ is given by $z^n[v^n]$.\ftn{AA: first sentence is superfluous.}
	
Observe that for all $n$:
	\begin{enumerate}
		\item[\textup{(i)}] For $i\in\cI$, 
		there exists a constant $C$ such that $\abs{q^n[v^n](x\pm e_i)- q^n[v^n](x)} \le C$;
		\item[\textup{(ii)}]For $i \in \cI$, there exists functions $\epsilon^n_i,\; \tilde{\epsilon}^n_i : \Rd \mapsto [0,1] $ such that $$q_i^n[v^n](x) \;=\; \epsilon^n_i(x) (x_i-n\rho_i) + \tilde{\epsilon}^n_i(x) \sum_{i'=1}^{i-1}(x_{i'}-n\rho_{i'}) + \order(n^{\beta}) \,.$$
	\end{enumerate}
	Hence the same proof as that of \cref{T3.1} may be employed to obtain the  result.
\end{proof}
\begin{remark}
	\cref{L3.3} shows that any sequence of scheduling policies which satisfies (i) and (ii) in the proof of \cref{L3.3} is ``stabilizing''.
\end{remark}

\section{Asymptotic Optimality}\label{S4}

\subsection{Optimal solutions to the limiting diffusion control problems}

The  characterization of optimal control for the limiting diffusion follow from the known results: the discounted problem in  \cite[Section~3.5.2]{ABG12}
and the ergodic problem in  \cite[Sections~3 and 4]{ABP15}.
We summarize these for our model.

\begin{theorem}\label{T4.1}
	$\Hat{V}_{\vartheta}$ 
	is the minimal nonnegative solution in $\Cc^2(\RR^d)$ of 
	\begin{equation*}
	\min_{u\in\Uadm}\left[\cL_u\Hat{V}_{\vartheta}(x) + \rc(x,u) \right] \;=\; \vartheta\Hat{V}_{\vartheta}(x)\,.
	\end{equation*}
	Moreover, $v\in\Usm$ is optimal for the $\vartheta$-discounted problem if and only if
	\begin{equation*}
	\langle b(x,v(x)),\grad\Hat{V}_{\vartheta}(x)\rangle
	+ \rc(x,v(x)) \;=\; H(x,\grad\Hat{V}_{\vartheta}(x))\,,
	\end{equation*}
	where 
	$$
		H(x,p) \;\df\; \min_{u\in\Act} \left[\langle b(x,u),p \rangle + \rc(x,u) \right]\,.
	$$
\end{theorem}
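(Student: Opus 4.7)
The plan is to reduce the statement to the general HJB theory for controlled diffusions developed in \cite[Section~3.5.2]{ABG12}, and then verify that the structural assumptions of that theory hold for our limiting diffusion in \cref{ET2.1A}. First, I would record the basic structural features that make the machinery applicable: the drift $b(x,u) = \ell - M(x-\langle e,x\rangle^+u) - \Gamma\langle e,x\rangle^+u$ is jointly continuous in $(x,u)$, Lipschitz in $x$ uniformly in $u\in\Act$, and has at most linear growth; the diffusion matrix $\sigma_\alpha'\sigma_\alpha$ is constant and nondegenerate (in all cases of $\alpha$, the formula for $a_{ij}$ in \cref{def-Lu} yields a positive definite matrix because $\Lambda^2$ is positive definite when $\alpha\ge 1$, and under the relevant nondegeneracy of $\Theta$ when $\alpha<1$); and the running cost $\rc(x,u)=\widetilde{\rc}(\langle e,x\rangle^+u)$ is continuous and bounded below by $0$, with polynomial growth in $x$ uniformly in $u\in\Act$.

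Next I would address existence and regularity. Using the exponential ergodicity of $\Hat{X}$ under constant controls (established in the preceding proposition) together with the polynomial growth of $\rc$, standard moment bounds show that $\mathfrak{J}_\vartheta(x,\Hat{U})$ is finite for at least one admissible stationary control, so $\Hat{V}_\vartheta$ is a well-defined nonnegative function with polynomial growth. Since the diffusion matrix is constant (hence uniformly elliptic) and $b,\rc$ are continuous with $b$ Lipschitz in $x$ and $u\mapsto\rc(x,u)$ continuous on the compact set $\Act$, the Hamiltonian $H(x,p)=\min_{u\in\Act}[\langle b(x,u),p\rangle+\rc(x,u)]$ is continuous in $(x,p)$ and convex in $p$. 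Standard elliptic PDE theory for the nonlinear Dirichlet problem on balls, together with a diagonal argument to pass to $\RR^d$ as in \cite[Lemma~3.5.3 and Theorem~3.5.6]{ABG12}, yields a $\Cc^2$ solution $\varphi$ of
\begin{equation*}
\min_{u\in\Act}\bigl[\cL_u\varphi(x) + \rc(x,u)\bigr] \;=\; \vartheta\,\varphi(x)\,.
\end{equation*}

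The third step is the verification identifying $\varphi$ with $\Hat{V}_\vartheta$. For any admissible $\Hat{U}$, apply It\^o's formula to $\E^{-\vartheta t}\varphi(\Hat{X}(t))$, use the HJB inequality $\cL_u\varphi(x)+\rc(x,u)\ge\vartheta\varphi(x)$, and take expectations (localizing by a sequence of stopping times and using the polynomial growth of $\varphi$ together with moment bounds on $\Hat{X}$) to conclude $\varphi(x)\le\mathfrak{J}_\vartheta(x,\Hat{U})$, hence $\varphi\le\Hat{V}_\vartheta$. Conversely, a measurable selector $v\in\Usm$ achieving the minimum in the HJB equation exists because $\Act$ is compact and the integrand is continuous; for this $v$, the same It\^o argument gives equality, so $\mathfrak{J}_\vartheta(x,v)=\varphi(x)$ and therefore $\Hat{V}_\vartheta\le\varphi$. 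Combining the two inequalities gives $\Hat{V}_\vartheta=\varphi$, and the same computation shows that a stationary Markov control $v$ is optimal if and only if it realizes the pointwise minimum in $H$, which is exactly the claimed characterization. Minimality among nonnegative $\Cc^2$ solutions follows from the first half of this verification argument applied to any nonnegative $\Cc^2$ supersolution.

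The only step requiring genuine care is the localization in the verification argument, since $\Hat{V}_\vartheta$ and $\varphi$ only have polynomial growth a priori; this is where the moment bounds for $\Hat{X}$ under any admissible control—obtained from the dissipativity of $b$ off of a compact set together with the constant diffusion matrix—are needed to dominate the boundary terms as the localizing stopping times tend to infinity. Everything else is a direct transcription of \cite[Section~3.5.2]{ABG12} to the present setting.
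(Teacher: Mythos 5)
Your proposal is correct and takes essentially the same route as the paper, whose proof simply invokes \cite[Theorem~3.5.6 and Remark~3.5.8]{ABG12}: you verify the structural hypotheses of that theory (Lipschitz drift with linear growth, constant covariance, nonnegative polynomially growing cost) and then spell out the standard existence-plus-verification argument that the citation encapsulates. The only caveat, the nondegeneracy of $\Theta$ when $\alpha<1$, is something you flag explicitly and the paper leaves implicit, so it does not constitute a gap relative to the paper's own treatment.
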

\begin{proof}
The result follows directly from Theorem 3.5.6 and Remark 3.5.8 in  \cite[Section 3.5.2]{ABG12}. 
\end{proof}

\begin{theorem}\label{T4.2}
There exists $\Hat{V}\in\Cc^2(\RR^d)$ satisfying 
$$
 \min_{u\in\Uadm}\left[\cL_u\Hat{V}(x) + \rc(x,u) \right] \;=\; \varrho_*\,.
$$
Also, $v\in\Usm$ is optimal for the ergodic control problem associate with $\rc$
if and only if it satisfies
	\begin{equation*}
	\langle b(x,v(x)),\grad\Hat{V}(x)\rangle
	+ \rc(x,v(x)) \;=\; H(x,\grad\Hat{V}(x))\,.
	\end{equation*}
Moreover, for an optimal $v\in\Usm$, it holds that
$$
\lim_{T\rightarrow\infty}\frac{1}{T}\Exp^v_x\left[\int_0^T\rc(\Hat{X}(s),v(\Hat{X}(s)))\right] 
\;=\; \varrho_*\,, \quad \forall x\in\RR^d \,.
$$
\end{theorem}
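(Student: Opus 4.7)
The plan is to follow the standard vanishing-discount approach adapted to piecewise linear diffusions with polynomial running cost, as in \cite[Sections~3--4]{ABP15} and \cite[Chapter~3.6]{ABG12}. The ingredients available are Proposition 3.1 (exponential ergodicity of $\Hat{X}$ under any constant control), \cref{T4.1} (HJB characterization of $\Hat{V}_\vartheta$), and the polynomial growth structure of $\widetilde{\rc}$.

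First I would bound $\vartheta\, \Hat{V}_\vartheta(0)$ uniformly in $\vartheta\in(0,1]$. Fixing a reference control $\bar{u}\in\Act$, Proposition 3.1 produces a unique invariant probability measure $\mu_{\bar{u}}$ of $\Hat{X}$ under $\bar{u}$, and the Foster--Lyapunov function available for piecewise linear diffusions with averaged negative drift gives finite polynomial moments of $\mu_{\bar{u}}$ of all orders. Hence
\[
\vartheta\, \Hat{V}_\vartheta(0) \;\le\; \vartheta\, \mathfrak{J}_\vartheta(0,\bar{u}) \;\xrightarrow[\vartheta\downarrow 0]{}\; \int_{\RR^d} \rc(x,\bar{u})\, \mu_{\bar{u}}(\D{x}) \;<\; \infty.
\]
Setting $\bar{V}_\vartheta(x)\df \Hat{V}_\vartheta(x)-\Hat{V}_\vartheta(0)$, interior Harnack and Krylov--Safonov estimates applied to the HJB equation of \cref{T4.1} yield local equicontinuity of the family $\{\bar{V}_\vartheta\}_{\vartheta\in(0,1]}$ in $\Cc^{2,\gamma}_{\textup{loc}}(\RR^d)$. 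Along a subsequence $\vartheta_k\downarrow 0$, the limits $\Hat{V}\in\Cc^2(\RR^d)$ and $\varrho^*\df \lim_k \vartheta_k \Hat{V}_{\vartheta_k}(0)$ satisfy the asserted HJB equation.

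For verification, let $v\in\Usm$ be a Borel measurable minimizing selector for $u\mapsto \langle b(x,u),\grad \Hat{V}(x)\rangle + \rc(x,u)$, obtained via a standard measurable-selection theorem. For any admissible $U\in\Uadm$, applying It\^o's formula to $\Hat{V}(\Hat{X}(\cdot))$ and invoking the HJB inequality gives
\[
\Exp^U_x\bigl[\Hat{V}(\Hat{X}(T))\bigr] - \Hat{V}(x) + \varrho^* T \;\le\; \Exp^U_x\left[\int_0^T \rc(\Hat{X}(s),U(s))\,\D{s}\right],
\]
with equality whenever $U\equiv v$. Dividing by $T$, using near-monotonicity of $\rc$ so that $T^{-1}\Exp^U_x[\Hat{V}(\Hat{X}(T))]\to 0$ along any cost-finite $U$, and taking $\limsup_{T\to\infty}$, I would conclude $\varrho^*\le\varrho_*(x)\le\mathfrak{J}(x,v)=\varrho^*$, yielding both the optimality of $v$, the $x$-independence $\varrho_*(x)=\varrho_*=\varrho^*$, and (from equality in the It\^o identity) the stated limit $\lim_{T\to\infty}T^{-1}\Exp^v_x[\int_0^T \rc(\Hat{X}(s),v(\Hat{X}(s)))\,\D{s}]=\varrho_*$.

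The main obstacle is closing the It\^o/martingale argument for the selector $v$ produced by the HJB, since a priori $v$ need not coincide with any constant control outside a compact set and Proposition 3.1 does not apply directly. The resolution, as carried out in \cite{ABP15} for the unmodulated case, combines (i) the geometric ergodicity enjoyed by any stationary Markov control that agrees with a constant control outside a compact set (per the Remark following Proposition 3.1), with (ii) an approximation of $v$ by such concatenated controls along an exhausting sequence of compacta, paired with the partial inf-compactness of $\rc$ in the direction $e$ forcing $v\in\Ussm$. Since the Markov modulation enters the limiting diffusion \cref{ET2.1A} only through the averaged drift coefficient $\ell$ and the covariance matrix $\sigma'_\alpha\sigma_\alpha$ of $\Hat{W}$, and since the piecewise linear drift structure is preserved, the argument from \cite{ABP15} carries over verbatim.
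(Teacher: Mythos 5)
Your plan ultimately rests on \cite{ABP15}, and in that respect it is consistent with the paper, whose entire proof of this theorem is the citation of \cite[Theorem~3.4]{ABP15}. The Abelian bound $\vartheta\Hat{V}_{\vartheta}(0)\le\vartheta\,\mathfrak{J}_{\vartheta}(0,\bar u)\to\int\rc(x,\bar u)\,\mu_{\bar u}(\D x)$ via a constant control and its invariant measure is fine. The genuine gap is in the middle of your sketch: the running cost here is \emph{not} near-monotone. Since $\rc(x,u)=\widetilde{\rc}(\langle e,x\rangle^+u)$ vanishes on the unbounded half-space $\{\langle e,x\rangle\le 0\}$, one has $\liminf_{\abs{x}\to\infty}\min_{u\in\Act}\rc(x,u)=0<\varrho_*$. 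This breaks the two steps where you lean on near-monotonicity: (i) the uniform local equicontinuity of $\{\Hat{V}_{\vartheta}-\Hat{V}_{\vartheta}(0)\}$ via Harnack/Krylov--Safonov is obtained in the classical scheme of \cite[Chapter~3.6]{ABG12} from the fact that $\Hat{V}_{\vartheta}$ attains its minimum in a fixed compact set uniformly in $\vartheta$ --- exactly what near-monotonicity provides and what is unavailable here, since the cost (and hence possibly the minimizer of $\Hat{V}_\vartheta$) degenerates along the direction $-e$; and (ii) the transversality step $T^{-1}\Exp^U_x[\Hat{V}(\Hat{X}(T))]\to 0$ along cost-finite $U$ (equivalently, $\Hat{V}$ bounded below) is again a conclusion of the near-monotone theory, not a free fact in this setting.

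Coping with a cost that only penalizes $\langle e,x\rangle^+$ is precisely the difficulty that \cite{ABP15} was written to resolve; their proof of Theorem~3.4 does not run the plain vanishing-discount argument as you state it, but proceeds through perturbed and spatially truncated problems (an added inf-compact penalty, controls frozen to a constant outside compacta, and convex-analytic arguments with ergodic occupation measures) before passing to the limit. So your closing remark that ``the argument from \cite{ABP15} carries over verbatim'' is in effect the whole proof, while the intermediate steps as written would not stand on their own. The efficient (and the paper's) route is to check that the limiting diffusion \cref{ET2.1A} --- piecewise linear drift with the averaged parameters, constant nondegenerate covariance $\sigma'_\alpha\sigma_\alpha$, and running cost $\rc$ of the form \cref{E2.3A} --- satisfies the structural hypotheses of \cite{ABP15}, and then quote \cite[Theorem~3.4]{ABP15} directly.
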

\begin{proof}
This follows directly from Theorem 3.4 in \cite{ABP15}.  
\end{proof}

The next theorem shows the existence of an $\epsilon$-optimal control, for
any $\epsilon >0$.
This is proved via the spatial truncation technique. 
\begin{theorem}\label{T4.3}
For any $\epsilon >0$, there exists a ball $B_R$ with $R=R(\epsilon)>0$,
a continuous precise 
control $v_\epsilon\in\Ussm$ which agrees with $e_d $ on $B^c_R$, and an associated invariant measure $\nu_{\epsilon}$ satisfying
$$
\int_{\RR^d}^{}\rc(x,v_{\epsilon}(x))\,\nu_\epsilon(\D{x}) \;\le\; \varrho_* + \epsilon\,.
$$
\end{theorem}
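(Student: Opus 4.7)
The plan is to adapt the spatial truncation technique of \cite[Section~4]{ABP15}. First, invoke \cref{T4.2} to obtain an optimal stationary Markov control $v^*\in\Usm$ whose invariant measure $\nu^*$ satisfies $\int_{\RR^d} \rc(x,v^*(x))\,\nu^*(\D x)=\varrho_*$. The stability of $\Hat{X}$ under $v^*$ together with a Foster--Lyapunov estimate built on the norm-like function $\cV_{m,\xi}(x)=\sum_i\xi_i\abs{x_i}^m$ (the same function used for the diffusion-scaled process in \cref{S3}, now applied to the limiting diffusion) yields $\int \cV_{m,\xi}\,\D\nu^*<\infty$ for every even $m\ge 2$; in particular the polynomially growing $\rc$ is $\nu^*$-integrable with arbitrarily small tail mass outside a sufficiently large ball.

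Next, I would construct $v_\epsilon$ by first replacing the (merely measurable) selector $v^*$ with a continuous $\Act$-valued approximation and then patching with the constant control $e_d$ outside a ball. Lusin's theorem applied to $\nu^*\big|_{B_R}$ together with Tietze extension (postcomposed with the metric projection onto the simplex $\Act$ to preserve admissibility) produce a continuous $\bar v^\delta\colon\RR^d\to\Act$ with $\nu^*\bigl(\{v^*\neq\bar v^\delta\}\cap B_R\bigr)<\delta$. Fixing a smooth cutoff $\chi\in\Cc^\infty_c(\RR^d;[0,1])$ with $\chi\equiv 1$ on $B_R$ and $\chi\equiv 0$ off $B_{R+1}$, set
\begin{equation*}
v_\epsilon(x)\;\df\;\chi(x)\,\bar v^\delta(x)+\bigl(1-\chi(x)\bigr)\,e_d\,,
\end{equation*}
which is continuous, $\Act$-valued by convexity of $\Act$, and agrees with $e_d$ on $B_{R+1}^c$. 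Since $v_\epsilon\equiv e_d$ outside the compact set $B_{R+1}$, the proposition in \cref{S3} combined with a standard Foster--Lyapunov argument for $\cV_{m,\xi}$ gives $v_\epsilon\in\Ussm$ together with a moment bound for the associated invariant measure $\nu_\epsilon$ uniform in $(R,\delta)$.

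The decisive step is to show $\int \rc(x,v_\epsilon(x))\,\nu_\epsilon(\D x)\to\varrho_*$ along a diagonal $(R,\delta)\to(\infty,0)$. The uniform moment bound makes $\{\nu_\epsilon\}$ tight; for any weak subsequential limit $\nu_\infty$ and any $f\in\Cc^\infty_c(\RR^d)$, the stationarity identity $\int\cL_{v_\epsilon}f\,\D\nu_\epsilon=0$ passes to the limit (by dominated convergence on the compact support of $f$, using that $v_\epsilon\to v^*$ pointwise $\nu^*$-a.e.\ along the diagonal), giving $\int\cL_{v^*}f\,\D\nu_\infty=0$. Hence $\nu_\infty$ is an invariant measure of $\Hat{X}$ under $v^*$, and the ergodicity provided by \cref{T4.2} forces $\nu_\infty=\nu^*$. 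Uniform integrability coming from $\sup_{R,\delta}\int\cV_{m+1,\xi}\,\D\nu_\epsilon<\infty$ permits passing the polynomial cost to the limit, yielding $\int\rc(x,v_\epsilon(x))\,\nu_\epsilon(\D x)\to\int\rc(x,v^*(x))\,\nu^*(\D x)=\varrho_*$, and the claim follows for $R$ large and $\delta$ small.

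The main obstacle is precisely this continuity of the ergodic cost under the combined perturbations. Two ingredients make it tractable: (a) the uniform Foster--Lyapunov inequality for $v_\epsilon$ outside $B_{R+1}$ (where $v_\epsilon\equiv e_d$), which with local boundedness of the coefficients provides moment bounds uniform in $(R,\delta)$ and thus uniform integrability of $\rc$; and (b) uniqueness of the invariant measure of $\Hat{X}$ under $v^*$, which identifies any weak limit of $\{\nu_\epsilon\}$ with $\nu^*$ and closes the continuity argument.
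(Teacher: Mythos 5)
Your overall strategy (approximate the optimal stationary Markov control from \cref{T4.2} by a continuous control via Lusin--Tietze, patch with $e_d$ outside a ball, and pass to the limit through the invariant measures) is a genuinely different route from the paper, which simply invokes the spatial-truncation construction in the proof of claim (5.14) of \cite{ABP15} (there one perturbs the ergodic control \emph{problem} by freezing the control to $e_d$ outside $B_R$ and shows the truncated optimal values converge to $\varrho_*$, rather than perturbing the optimal control itself). However, as written your argument has two genuine gaps. First, the limit passage in the stationarity identity does not work as stated: from $\int\cL_{v_\epsilon}f\,\D\nu_\epsilon=0$ you want $\int\cL_{v^*}f\,\D\nu_\infty=0$, but $x\mapsto\cL_{v^*}f(x)$ is only Borel measurable (since $v^*$ is merely measurable), so weak convergence $\nu_\epsilon\Rightarrow\nu_\infty$ gives nothing for the term $\int\cL_{v^*}f\,\D\nu_\epsilon$; and the error term $\int\abs{\cL_{v_\epsilon}f-\cL_{v^*}f}\,\D\nu_\epsilon$ is supported on the Lusin exceptional set, which you made small with respect to $\nu^*$, not with respect to $\nu_\epsilon$. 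Nothing in your setup controls $\nu_\epsilon$ on that set, so ``dominated convergence on the compact support of $f$'' is circular. To close this you would need either locally uniform bounds on the densities of $\nu_\epsilon$ (Harnack-type estimates, with Lusin applied with respect to Lebesgue measure), or to work with the topology of Markov controls and the continuity of $v\mapsto$ ergodic occupation measure as in \cite[Lemma 2.4.1 and Section 3.2]{ABG12}; neither ingredient appears in your proof.

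Second, your ingredient (a) does not yield moment bounds uniform in $(R,\delta)$, which your tightness and uniform-integrability steps require. A Foster--Lyapunov inequality for $\cV_{m,\xi}$ that holds only on $B_{R+1}^c$ (where $v_\epsilon\equiv e_d$), combined with mere local boundedness of the drift inside $B_{R+1}$, gives $\int\cV_{m,\xi}\,\D\nu_\epsilon\le C(R)$ with a constant that grows with $R$ (the bound involves $\sup_{B_{R+1}}\abs{\cL_{v_\epsilon}\cV_{m,\xi}}$), so letting $R\to\infty$ destroys the uniformity. The correct input here is a control-independent Lyapunov inequality valid on all of $\RR^d$, which does hold for this diffusion because all averaged abandonment rates $\gamma_i^\pi$ are strictly positive (this uniform stability over all of $\Usm$ is established in \cite{ABP15}), but it must be invoked; the reason you give is not sufficient. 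The same uniform-stability fact is also what justifies your opening claim that $\int\cV_{m,\xi}\,\D\nu^*<\infty$. With these two repairs (uniform stability for the moment bounds, and either density estimates or the Markov-control-topology continuity argument for identifying the limit $\nu_\infty=\nu^*$), your approximation scheme would give the statement; as written, it does not.
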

\begin{proof}
This result follows from the proof of claim (5.14) in \cite{ABP15}.
\end{proof}

\subsection{Asymptotic optimality of the discounted cost problem}
In this subsection, we first establish an estimate for $\Hat{X}^n$ 
by using an auxiliary process. Then, following a similar approach as in \cite{AMR04}, we prove asymptotic optimality for the discounted problem.

Given the admissible scheduling policy $\Hat{U}^n$, 
let $\breve{X}^n$ be a $d$--dimensional process defined by
\begin{multline}\label{breveX}
	\breve{X}_i^n(t) \;\df\; \Hat{X}_i^n(0) + \Hat{W}_i^n(t) 
	- \int_{0}^{t}\Bar{\mu}_i^n
	\big(\breve{X}_i^n(s) - \langle e,\breve{X}^n(s) \rangle^{+}\Hat{U}_i^n(s)\big)\,\D{s} \\
	- \int_{0}^{t}\Bar{\gamma}_i^n\langle e,\breve{X}^n(s) \rangle^{+}\Hat{U}_i^n(s)\,\D{s}
\end{multline}
for $i\in\cI$. 

\begin{lemma}\label{LL4.1}
As $n\rightarrow\infty$, $\breve{X}^n$ and $\Hat{X}^n$ are asymptotically equivalent. 
\end{lemma}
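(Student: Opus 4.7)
The plan is to estimate the difference $E^n(t) \df \Hat{X}^n(t) - \breve{X}^n(t)$ via a Gronwall-type argument. Subtracting \cref{breveX} from the representation of $\Hat{X}^n$ in \cref{HatX} and using the work-conservation identities $\Hat{Q}_i^n(s) = \langle e,\Hat{X}^n(s)\rangle^+ \Hat{U}_i^n(s)$ and $\Hat{Z}_i^n(s) = \Hat{X}_i^n(s) - \Hat{Q}_i^n(s)$ that follow from admissibility, one obtains
\begin{equation*}
E_i^n(t) \;=\; \mathcal{R}_i^n(t) \;-\; \int_0^t \Bar{\mu}_i^n\, E_i^n(s)\,\D{s} \;+\; \int_0^t (\Bar{\mu}_i^n-\Bar{\gamma}_i^n)\bigl[\langle e,\Hat{X}^n(s)\rangle^+ - \langle e,\breve{X}^n(s)\rangle^+\bigr]\Hat{U}_i^n(s)\,\D{s},
\end{equation*}
with residual
\begin{equation*}
\mathcal{R}_i^n(t) \;\df\; \Hat{\ell}_i^n(t) \;-\; \int_0^t\bigl[\mu_i^n(J^n(s))-\Bar{\mu}_i^n\bigr]\Hat{Z}_i^n(s)\,\D{s} \;-\; \int_0^t\bigl[\gamma_i^n(J^n(s))-\Bar{\gamma}_i^n\bigr]\Hat{Q}_i^n(s)\,\D{s}.
\end{equation*}
Because $\abs{\langle e,x\rangle^+-\langle e,y\rangle^+}\le \norm{x-y}$ and $\{\Bar{\mu}_i^n,\Bar{\gamma}_i^n\}$ is uniformly bounded in $n$, Gronwall's inequality reduces the lemma to showing that the residual $\mathcal{R}^n$ is negligible on compact time intervals.

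The two modulated averaging integrals are the crux: to handle them, I would invoke Khasminskii's Poisson-equation device that already underlies \cref{L3.2}. Let $\psi_i\in\Cc(\cK)$ be the bounded solution, furnished by Fredholm's alternative, of $\cQ\psi_i = \mu_i(\cdot)-\mu_i^{\pi}$; an analogous Poisson equation handles the $n$-dependent centering $\mu_i^n(\cdot)-\Bar{\mu}_i^n$, and likewise for $\gamma$. Applying Dynkin's formula to the $n^{\alpha}\cQ$-Markov chain $J^n$ yields the representation
\begin{equation*}
\int_0^s \bigl[\mu_i(J^n(r))-\mu_i^{\pi}\bigr]\,\D{r} \;=\; \frac{1}{n^{\alpha}}\bigl[\psi_i(J^n(s))-\psi_i(J^n(0))-M_s^{i,n}\bigr],
\end{equation*}
where $M^{i,n}$ is a purely discontinuous martingale with $\langle M^{i,n}\rangle_s = \order(n^{\alpha}s)$. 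Stochastic integration by parts against the semimartingale $\Hat{Z}_i^n$ (resp.\ $\Hat{Q}_i^n$) then decomposes each averaging integral into a boundary term, an Itô integral, and a covariation term, each of order $n^{-\alpha/2}$ in $L^2$ after invoking the boundedness of $\psi_i$ and Doob's inequality. The deterministic summand $\Hat{\ell}_i^n$ is controlled via the a.s.\ convergence $\Hat{\ell}^n(t)\to\ell t$ in \cref{E2.7}, which yields a vanishing pathwise residual under the intended reading of $\breve{X}^n$.

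The principal obstacle is that these estimates must hold \emph{uniformly} in the admissible policy $\Hat{U}^n$, whereas $\Hat{Z}_i^n$ and $\Hat{Q}_i^n$ are a priori unbounded and their drifts involve the large modulated rates $\mu_i^n(J^n(\cdot))$, $\gamma_i^n(J^n(\cdot))$ in a policy-dependent way. I would circumvent this by the stopping-time localization $\tau_M^n\df\inf\{t\ge 0\colon\abs{\Hat{X}^n(t)}\ge M\}$, running the Gronwall estimate on $[0,T\wedge\tau_M^n]$, and then letting $M\to\infty$ with the help of the uniform-in-$(n,\Hat{U}^n)$ moment bound on $\sup_{t\le T}\abs{\Hat{X}^n(t)}$ advertised in the introduction (and established later in this section, independently of the static-priority setting of \cref{T3.1}). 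This localization cleanly decouples the fast Markov modulation from the slow queueing state, and combined with the Poisson-equation bounds and Gronwall it gives $\sup_{t\le T}\abs{E^n(t)}\to 0$ in probability, the asserted asymptotic equivalence.
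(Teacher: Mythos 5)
Your reduction is sound and coincides with the paper's: the difference equation you write for $E^n=\Hat{X}^n-\breve{X}^n$ is exactly the paper's decomposition \cref{PL2.1A} after regrouping, the Gronwall/continuous-mapping step is the same, and your observation about the $\Hat{\ell}^n$ term is warranted (the intended $\breve{X}^n$ carries it, and the limit \cref{ET2.1A} has drift $\ell$). The gap is in how you kill the modulated residual. After the Poisson-equation/Dynkin representation $F^n_i(t)\df\int_0^t[\mu^n_i(J^n(s))-\Bar{\mu}^n_i]\,\D{s}=n^{-\alpha}\bigl[\psi_i(J^n(t))-\psi_i(J^n(0))-M^{i,n}_t\bigr]$, your integration by parts produces the term $\int_0^t F^n_i(s-)\,\D\Hat{Z}^n_i(s)$ (resp.\ $\D\Hat{Q}^n_i(s)$), and you assert that each resulting term is $\order(n^{-\nicefrac{\alpha}{2}})$ in $L^2$ by boundedness of $\psi_i$ and Doob. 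Doob controls $\sup_{s\le t}\abs{F^n_i(s)}=\order(n^{-\nicefrac{\alpha}{2}})$, but that does not control this integral: $\Hat{Z}^n_i$ and $\Hat{Q}^n_i$ are merely adapted finite-variation processes whose variation is policy-dependent and at least of order $n^{1-\beta}$ per unit time (transitions occur at rate $\order(n)$, and a preemptive policy may reshuffle servers arbitrarily often), so the available bound is $\order(n^{-\nicefrac{\alpha}{2}}\,n^{1-\beta})=\order(1)$ for $\alpha\le1$, and there is no martingale structure for these integrators that is uniform over admissible policies (in particular $\Hat{Q}^n_i=\langle e,\Hat{X}^n\rangle^+\Hat{U}^n_i$ with $\Hat{U}^n$ only adapted). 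Localizing on $\{\sup_{t\le T}\abs{\Hat{X}^n(t)}\le M\}$ bounds the supremum of the state, not the variation of $Z^n$, so it does not rescue this step.

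There is also a circularity in your de-localization: the uniform-in-$(n,\Hat{U}^n)$ bound on $\sup_{t\le T}\abs{\Hat{X}^n(t)}$ that you invoke as ``established later in this section'' is \cref{LL4.2}, whose proof in the paper transfers the bound from $\breve{X}^n$ to $\Hat{X}^n$ using precisely \cref{LL4.1}; what you may legitimately cite instead is the stochastic boundedness of $\Hat{X}^n$ obtained independently inside the proof of \cref{T2.1}\,(i). The paper's own proof sidesteps both difficulties by never integrating by parts against the queueing/policy processes: each modulated integral is written as $\int_0^t n^{-\nicefrac{\alpha}{2}}(\cdot)\,\D\bigl(n^{\nicefrac{\alpha}{2}}\int_0^s(\Ind(J^n(u)=k)-\pi_k)\,\D{u}\bigr)$, where the integrand vanishes u.c.p.\ by the fluid limit of \cref{T2.1}\,(i) (which is policy-uniform and needs no diffusion-scale moments) and the integrator converges weakly by the occupation-time FCLT, the conclusion then following as in Lemma 4.4 of \cite{JMTW17}. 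To repair your argument you would either adopt this product structure or supply a genuinely policy-uniform estimate for $\int_0^t F^n_i(s-)\,\D\Hat{Q}^n_i(s)$; neither appears in your sketch.
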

The proof of \cref{LL4.1} is given in \cref{S6}.

\begin{lemma}\label{LL4.2}
	We have 
	\begin{equation}\label{ELL4.2A}
	 \Exp\left[\norm{\Hat{X}^n(t)}^{m}\right] \;\le\;  C_1(1 + t^{m_0})(1 + \norm{x}^{m_0})
	\end{equation} 
	for some positive constants $C_1$ and $m_0$, with $m$ defined in \cref{E2.3A}. 
\end{lemma}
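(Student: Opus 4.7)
The plan is to work with the auxiliary process $\breve{X}^n$ from \cref{breveX}, whose drift is Lipschitz in $\breve{X}^n$ uniformly in $n$ and in the admissible control $\Hat{U}^n$, and then transfer the estimate to $\Hat{X}^n$ via the asymptotic equivalence in \cref{LL4.1}.

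First, I would obtain a polynomial-in-$t$ moment bound on the driving noise $\Hat{W}^n=\Hat{L}^n+\Hat{A}^n-\Hat{S}^n-\Hat{R}^n$ uniformly in $n$ and in the admissible policy. The term $\Hat{L}^n$ is the integral of a centered bounded function of the stationary chain $J^n$, so \cref{E2.7,E2.8} yield $\Exp[\sup_{s\le t}\abs{\Hat{L}^n(s)}^m] \le C(1+t^m)$ uniformly in $n$. The remaining terms are compensated point-process martingales scaled by $n^{-\beta}$; via Burkholder--Davis--Gundy their $m$-th moments on $[0,t]$ are bounded by $C(1+t^{m/2}) + C\int_0^t \Exp[\abs{\Hat{X}^n(s)}^{m/2}]\,\D{s}$, so the intensity-driven ``feedback'' will be closed through Gronwall in the next step.

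Second, I would apply It\^o's formula to a weighted norm-like Lyapunov function $V(x)=\sum_{i\in\cI}\xi_i(1+x_i^2)^{m_0/2}$ along $\breve{X}^n$, with $m_0\ge m$ to be chosen. The drift in \cref{breveX} decomposes into a diagonal dissipative term $-\Bar{\mu}_i^n\breve{X}_i^n$ and a cross-term $(\Bar{\mu}_i^n-\Bar{\gamma}_i^n)\langle e,\breve{X}^n\rangle^+\Hat{U}_i^n$, with $\Hat{U}^n$ in the simplex $\Act$. For suitable weights $\xi$ (chosen in the spirit of \cref{L3.1}) and $m_0$ large enough, the diagonal part dominates the cross-term uniformly over $u\in\Act$ after applying Young's inequality, giving a Foster--Lyapunov bound of the form $\mathcal{L} V(\breve{X}^n)\le C_0-C_1 V(\breve{X}^n)+(\text{noise})$. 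Taking expectations, combining with the bound on $\Hat{W}^n$ from the first step, and invoking Gronwall yields
\begin{equation*}
\Exp\bigl[V(\breve{X}^n(t))\bigr] \;\le\; C(1+t^{m_0})(1+V(x))\,.
\end{equation*}

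Third, the bound on $\breve{X}^n$ would be transferred to $\Hat{X}^n$ by refining the argument of \cref{LL4.1} (given in \cref{S6}) to produce the moment estimate $\Exp[\abs{\Hat{X}^n(t)-\breve{X}^n(t)}^m]\le C(1+t^{m_0})(1+\abs{x}^{m_0})$; combined with the second step, this gives the stated bound. The main obstacle will be securing the Foster--Lyapunov inequality for $\breve{X}^n$ \emph{uniformly} over admissible $\Hat{U}^n$: the cross-term $(\Bar{\mu}_i^n-\Bar{\gamma}_i^n)\langle e,\breve{X}^n\rangle^+\Hat{U}_i^n$ could in principle destabilize the diagonal part, so the weights $\xi_i$ and the exponent $m_0$ must be tuned (exploiting $\langle e,u\rangle=1$ and a rank-one argument on the perturbation) to guarantee that the diagonal dissipation swamps the cross-term independently of the admissible control---without such uniformity, Gronwall would produce an exponential rather than the required polynomial-in-$t$ envelope.
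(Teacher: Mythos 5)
Your overall architecture is the same as the paper's: bound the auxiliary process $\breve{X}^n$ of \cref{breveX} and then transfer to $\Hat{X}^n$ via \cref{LL4.1} (the paper gets the bound on $\breve{X}^n$ by observing that $\breve{X}^n-\langle e,\breve{X}^n\rangle^+\Hat{U}^n$ is work-conserving and invoking the method of Lemma~3 in \cite{AMR04}, rather than by a Lyapunov computation). However, two of your steps have genuine gaps. First, \cref{E2.7,E2.8} are convergence statements (a.s.\ and in distribution) and do not yield the claimed bound $\Exp[\sup_{s\le t}\abs{\Hat{L}^n(s)}^m]\le C(1+t^m)$ uniformly in $n$; pathwise $\Hat{L}^n$ is only $\order(n^{1-\beta}t)$, and the correct device is the martingale $\Hat{L}^n+B^n$ with $B^n$ as in \cref{ES4.2A} (Lemma~3.1 of \cite{ABMT}), which is exactly how the paper handles this term in the proofs of \cref{L4.1,L4.2}.

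Second, and more seriously, your scheme is circular as written, and the proposed repair of the transfer step would not deliver the polynomial envelope. The quadratic variations/compensators of $\Hat{S}^n,\Hat{R}^n$ (and the jump-correction terms in It\^o's formula for $V(\breve{X}^n)$) involve $Z^n$ and $Q^n$, i.e.\ the \emph{original} process $\Hat{X}^n$, not $\breve{X}^n$, so the Foster--Lyapunov inequality for $\breve{X}^n$ cannot be closed in terms of $\breve{X}^n$ alone; likewise the difference equation \cref{PL2.1A} underlying \cref{LL4.1} contains the terms $(\mu^n_i(J^n)-\Bar{\mu}^n_i)\Hat{X}^n_i$ and $\langle e,\Hat{X}^n\rangle^+\Hat{U}^n_i$, so a moment bound on $\Hat{X}^n-\breve{X}^n$ presupposes moments of $\Hat{X}^n$, which is the quantity being estimated. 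Moreover, a plain Gronwall argument on \cref{PL2.1A} produces an $\E^{Ct}$ factor, which is incompatible with the required $(1+t^{m_0})$ growth. The missing idea --- and the reason the paper emphasizes the work-conserving form of $\breve{X}^n-\langle e,\breve{X}^n\rangle^+\Hat{U}^n$ and cites \cite[Lemma~3]{AMR04} --- is the crude pathwise domination $Q^n_i(s)\vee Z^n_i(s)\le X^n_i(s)\le X^n_i(0)+A^n_i(s)$, which bounds all the intensity/compensator terms by quantities whose $m$-th moments grow polynomially in $t$ and in $\norm{x}$, uniformly in $n$ and in the policy, so that no self-referential Gronwall loop arises. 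Alternatively, one can run the Lyapunov estimate directly on $(\Hat{X}^n,J^n)$ (or on $\tilde{X}^n=\Hat{X}^n+B^n$, as in the proof of \cref{L4.1}, with weights chosen as in \cref{L3.1} and \cite{ABP15} using $\gamma^\pi_i>0$), which removes the need for the transfer step altogether; but as proposed, the estimate on $\breve{X}^n$ plus a refined \cref{LL4.1} does not close.
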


\begin{proof}
Recall $\breve{X}^n$ defined in \cref{breveX}. 
For $t\ge 0$, $\breve{X}^n(t) - \langle e,\breve{X}^n(t) \rangle^+\Hat{U}^n(t)$ satisfies the work--conserving condition. Thus, following the same method in \cite[Lemma 3]{AMR04}, we have 
$$ \Exp\left[\norm{\breve{X}^n(t)}^{m}\right] \;\le\;  C_2(1 + t^{m_0})(1 + \norm{x}^{m_0}) $$
for some positive constants $C_2$ and $m_0$. 
As a consequence, \cref{ELL4.2A} holds by \cref{LL4.1}.
\end{proof}

\begin{proof}[Proof of \cref{T2.2}.] (Sketch)
We first show that
\begin{equation}\label{lower_bound_discounted}
\liminf_{n\rightarrow\infty}\;
\Hat{V}^n_{\vartheta}(\Hat{X}^n(0))\;\ge\;\Hat{V}_{\vartheta}(x)\,.
\end{equation}
\cref{LL4.2} corresponds to \cite[Lemma 3]{AMR04}, and \cref{T2.1} corresponds \cite[Lemma 4]{AMR04}.
By using \cref{T4.1}, we can get the same result as in \cite[Proposition 5]{AMR04}.
Thus, we can prove \eqref{lower_bound_discounted} by following the proof of \cite[Theorem 4 (i)]{AMR04}. 

Next, we show that there exists a sequence of admissible scheduling polices $\Hat{U}^n$ which attains optimality (asymptotically).
Observe from \cite{AMR04} that the partial derivates of $\Hat{V}_{\vartheta}$ in \cref{T4.1} up to
order two are locally H\"older continuous 
%is also locally H\"older continuous up to order two
%\ftn{AA: I don't like H\"older of order two; mention derivatives} 
(see also \cite[Lemma 3.5.4]{ABG12}), and the optimal value $\Hat{V}_{\vartheta}$ has polynomial growth. 
By \cite[Theorem 1]{AMR04}, there exists an optimal control $v_h\in\Usm$ for the discounted problem. Recall $\upomega$ defined in \cref{D3.5}.
Let 
$$
	\fA^n_h \;\df\; \{x \in \RR^d_+\colon \langle e,x \rangle \le x_i,\;\forall i\in\cI \}\,, \quad \text{and} \quad \fX^n_h \;\df\; \{\Hat{x}^n(x)\colon x\in\fA^n_h\}\,.
$$  
Given $X^n$, we construct a sequence of scheduling policies as follows: 
\begin{equation}\label{PT2A}
 Q^n(t) \;\df\; \begin{cases}
 \upomega\big(\langle e,X^n(t) \rangle -n)^{+}v_h(\Hat{X}^n(t))\big) \quad &\text{for}\; \Hat{X}^n(t)\in\fX^n_h\,,\\
 \tilde{z}^n(X^n(t)) \quad &\text{otherwise}\,,
 \end{cases}
\end{equation} 
where $\tilde{z}^n$ is the static priority policy defined in \cref{D3.1}.
Here the value of the scheduling policy outside $\fX^n_h$ is irrelevant for our purpose.
For $n\in\NN$, let $\big(X^n_h,Q^n_h,Z^n_h,\Hat{U}^n_h\big)$ be a sequence of queueing systems constructed by using \cref{PT2A}, and $K^n$ be the process defined by 
$$
K^n(t) \;\df\; \big\langle b(\Hat{X}^n_h(t),\Hat{U}^n_h(t)), \grad \Hat{V}_\vartheta(\Hat{X}^n_h(t)) \big\rangle
 + \rc(\Hat{X}^n_h(t),\Hat{U}^n_h(t)) - H(\Hat{X}^n_h(t),\grad \Hat{V}_\vartheta(\Hat{X}^n_h(t)))\,.
$$
It is easy to see that, for any $y\in\RR^d$, 
$
 \abs{\upomega(y) - y} \le 2d
$. Then, using \cref{T2.1,LL4.2}, and following the same proof as in \cite[Theorem 2 (i)]{AMR04}, we have 
\begin{equation}\label{PL2B}
	\int_0^\cdot \E^{-\vartheta s}K^n(s)\,\D{s} \;\Rightarrow\; 0\,.
\end{equation}
Note that \cref{PL2B} corresponds to the claim (49) in \cite{AMR04}.
Then, we follow the method in \cite[Theorem 4 (ii)]{AMR04} and obtain that
$$
\lim_{n\rightarrow\infty}\;
\mathfrak{J}^n_{\vartheta}\big(\Hat{X}_h^n(0), \Hat{U}_h^n\big)\;\le\;\Hat{V}_{\vartheta}(x) \,.
$$
This completes the proof. 
\end{proof}

\subsection{Proof of the lower bound for the ergodic problem}\label{S4.3}
We have the following theorem concerning the lower bound. 
\begin{theorem}[lower bound]\label{T2.3} 
It holds that
	$$\liminf_{n\rightarrow\infty}\;\Hat{V}^n(\Hat{X}^n(0))\;\ge\;\varrho_*(x)\,. $$ 
\end{theorem}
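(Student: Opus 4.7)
The plan is to follow the roadmap sketched in the introduction: reduce $\liminf_n \Hat{V}^n(\Hat{X}^n(0))$ to a minimization over ergodic occupation measures of the limiting diffusion, via convergence of mean empirical measures combined with an auxiliary semimartingale approximation.

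First I would fix a sequence $\{\Hat{U}^n\}$ of admissible scheduling policies and times $T_n \to \infty$ along which the Ces\`aro averages
\[
\bar\nu^n_{T_n}(A \times B) \;\df\; \frac{1}{T_n}\int_0^{T_n}\Prob\bigl(\Hat{X}^n(s) \in A,\,\Hat{U}^n(s)\in B\bigr)\,\D{s}
\]
realize $\liminf_n \Hat{V}^n(\Hat{X}^n(0))$ up to $o(1)$, with the standing assumption that this liminf is finite (otherwise there is nothing to prove). The lower bound in \eqref{A2.3-E1} on $\widetilde\rc$ combined with boundedness of the cost then forces
\[
\sup_n \int_{\RR^d \times \Act} \abs{x}^{\underline m}\,\bar\nu^n_{T_n}(\D{x},\D{u}) \;<\; \infty,
\]
which yields tightness of $\{\bar\nu^n_{T_n}\}$ in $\cP(\RR^d \times \Act)$; let $\nu^*$ denote a subsequential weak limit.

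Next I would show that $\nu^* \in \eom$. To this end I would introduce an auxiliary process $\breve X^n$ in the spirit of \cref{breveX} but driven by the $\pi$-averaged coefficients $\Bar\lambda^n_i,\Bar\mu^n_i,\Bar\gamma^n_i$ and the same $\Hat W^n$, use \cref{LL4.1} to transfer estimates between $\Hat X^n$ and $\breve X^n$, and for each $f \in \Cc^2_c(\RR^d)$ apply a Poisson-equation corrector of the form $f + n^{-\alpha} g_n[f]$ (in the spirit of \cref{L3.2}) to obtain the identity
\[
\Exp\bigl[f(\Hat X^n(T_n)) - f(\Hat X^n(0))\bigr] \;=\; \Exp\int_0^{T_n}\cL_{\Hat U^n(s)} f(\Hat X^n(s))\,\D{s} \;+\; o(T_n),
\]
where $\cL_u$ is the averaged generator in \cref{def-Lu}. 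The $o(T_n)$ term collects both the martingale contribution (controlled via the uniform moment bound obtained by dominating the moments of $\Hat X^n$ by those of the static priority benchmark of \cref{T3.1}, together with \cref{LL4.2}) and the residual from replacing $J^n$-dependent coefficients by their $\pi$-averages. Dividing by $T_n$ and passing to the limit along the chosen subsequence gives $\int \cL_u f\,\D\nu^* = 0$ for every $f \in \Cc^2_c(\RR^d)$, so $\nu^*$ is an ergodic occupation measure for \cref{ET2.1A}.

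Finally, continuity and nonnegativity of $\rc$, Fatou's lemma, and \cref{T4.2} combine to give
\[
\liminf_{n\to\infty} \Hat V^n(\Hat X^n(0)) \;=\; \lim_{n\to\infty}\int \rc \,\D\bar\nu^n_{T_n} \;\ge\; \int \rc \,\D\nu^* \;\ge\; \inf_{\nu \in \eom}\int \rc \,\D\nu \;=\; \varrho_*(x),
\]
as desired. The main obstacle is the identification step: because the coefficients of $\Hat X^n$ depend on the background $J^n$, a direct application of It\^o's formula to $f(\Hat X^n)$ produces a residual that need not vanish in Ces\`aro average. The Poisson-equation corrector is precisely what absorbs that residual into the averaged generator $\cL_u$, and verifying this uniformly in the admissible control $\Hat U^n$ is the technical crux. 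A secondary difficulty is obtaining the required moment control in stationary rather than finite-horizon form; this is what forces us to dominate the occupation measure of $\Hat X^n$ by that of the geometrically ergodic benchmark from \cref{T3.1}.
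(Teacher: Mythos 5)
Your overall architecture (Ces\`aro/mean empirical measures, tightness, identification of limits as ergodic occupation measures, then \cref{T4.2}) matches the paper's route, and your identification step via a perturbed test function $f+n^{-\alpha}g_n[f]$ is a legitimate variant of what the paper actually does (it corrects the \emph{process} instead, setting $\tilde{X}^n=\Hat{X}^n+B^n$ with $B^n$ built from the deviation matrix $\Upsilon$ so that $\Hat{L}^n+B^n$ is a martingale, and then applies the Kunita--Watanabe formula; the uniform bound $\sup_t|\Hat{X}^n(t)-\tilde{X}^n(t)|\le \Bar{C}_0 n^{-\nicefrac{\alpha}{2}}$ is what lets stationary estimates transfer, which is cleaner than invoking the finite-horizon equivalence of \cref{LL4.1} as you suggest).

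The genuine gap is the moment/tightness step. You claim that boundedness of the cost together with the lower bound in \eqref{A2.3-E1} forces $\sup_n\int|x|^{\underline m}\,\D\bar\nu^n_{T_n}<\infty$. But the running cost is $\rc(x,u)=\widetilde\rc(\langle e,x\rangle^+u)$, so a bounded average cost only controls the long-run average of $(\langle e,\Hat{X}^n\rangle^+)^{m}$, i.e.\ the queue; it says nothing about the negative excursions of $\Hat{X}^n$ (many idle servers, zero queue, zero cost), and hence does not by itself give tightness of the empirical measures in $\RR^d\times\Act$. Your proposed fixes do not close this: there is no comparison result dominating the moments of $\Hat{X}^n$ under an arbitrary admissible policy by those of the static-priority benchmark of \cref{T3.1} (the paper even remarks that ergodicity under general stationary Markov policies is open), and \cref{LL4.2} is a finite-horizon bound growing like $t^{m_0}$, so dividing by $T$ does not yield a long-run average bound. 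This is exactly the role of \cref{L4.1} in the paper: one applies the Kunita--Watanabe formula to $g(\tilde{X}^n)$ with $g(x)\approx\sum_i|x_i|^m$ under an \emph{arbitrary} admissible policy, obtains a drift inequality of the form \cref{PL4.4E} in which the only uncontrolled term is $\Exp\int_0^t(\langle e,\tilde{X}^n(s)\rangle^+)^m\,\D{s}$, and then uses $\sup_n\mathfrak{J}^n(\Hat{X}^n(0),\Hat{U}^n)<\infty$ to control precisely that term. Without an argument of this type your tightness claim, and hence the whole lower bound, does not go through.
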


We first assert that $\Hat{X}^n$ is a semi--martingale.
The proof of the following lemma is given in \cref{S6}.
\begin{lemma}\label{semi-martingale}
	Under any admissible policy $Z^n$, 
	$\Hat{X}^n$ is a semi--martingale with respect to the filtration $\mathbb{F}^n \df \{\mathcal{F}^n_t\colon t\ge0\}$, where $\mathcal{F}^n_t$ is defined in \cref{S2.1}. 
\end{lemma}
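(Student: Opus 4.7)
The plan is to exhibit $\Hat{X}^n$ explicitly as the sum of a finite-variation process and an $\mathbb{F}^n$-local martingale, using the decomposition already provided in \cref{HatX}. This is more than the statement requires, but it is the most informative way to see the claim.

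First, I would argue that all the drift-type terms on the right-hand side of \cref{HatX} are $\mathbb{F}^n$-adapted and of finite variation on compacts. The two terms $\int_{0}^{t}\mu_i^n(J^n(s))\Hat{Z}^n_i(s)\,\D{s}$ and $\int_{0}^{t}\gamma_i^n(J^n(s))\Hat{Q}^n_i(s)\,\D{s}$ are absolutely continuous in $t$, and their integrands are $\mathbb{F}^n$-adapted since $J^n$, $Z^n$, and $Q^n$ are $\mathbb{F}^n$-adapted under any admissible policy. Likewise, $\Hat{\ell}^n_i$ and $\Hat{L}^n_i$ are, up to the multiplicative factors $n^{-\beta}$ and $n^{1-\beta}$, linear combinations with deterministic coefficients of the absolutely continuous, adapted occupation integrals $\int_{0}^{t}\Ind(J^n(s)=k)\,\D{s}$. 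All of these are therefore $\mathbb{F}^n$-semimartingales with finite-variation paths.

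Second, I would show that $\Hat{A}^n_i$, $\Hat{S}^n_i$, and $\Hat{R}^n_i$ are $\mathbb{F}^n$-local martingales by verifying that $A^n_i$, $S^n_i$, and $R^n_i$ admit the $\mathbb{F}^n$-predictable compensators $\int_{0}^{t}\lambda_i^n(J^n(s))\,\D{s}$, $\int_{0}^{t}\mu_i^n(J^n(s))Z^n_i(s)\,\D{s}$, and $\int_{0}^{t}\gamma_i^n(J^n(s))Q^n_i(s)\,\D{s}$, respectively. In each case the intensity is $\mathbb{F}^n$-predictable (left-continuous with right limits) by admissibility of $Z^n$ and by $\mathbb{F}^n$-adaptedness of $J^n$. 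The martingale property then reduces to checking that, for any bounded $\mathcal{F}^n_t$-measurable random variable $\xi$ and any $h>0$,
\begin{equation*}
\Exp\!\left[\xi\bigl(A^n_i(t+h)-A^n_i(t)\bigr)\right] \;=\; \Exp\!\left[\xi\!\int_{t}^{t+h}\lambda_i^n(J^n(s))\,\D{s}\right]\,,
\end{equation*}
and analogously for $S^n_i$ and $R^n_i$. This is done by a standard application of Watanabe's characterization (or Bremaud's intensity theorem) to the time-changed Poisson process $A^n_i(t)=A_{*,i}(\int_{0}^{t}\lambda_i^n(J^n(s))\,\D{s})$, once one has independence of the driving unit-rate Poisson processes from the past $\mathcal{F}^n_t$. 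Summing the finite-variation and martingale parts yields the semimartingale decomposition of $\Hat{X}^n$.

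The main obstacle is precisely this last independence step, because the filtration $\mathbb{F}^n$ is enlarged by the scheduling policy and the modulating chain $J^n$. Here the non-anticipativity condition (iii)(b) of \cref{D2.1} is essential: it states that $\mathcal{F}^n_t$ and $\mathcal{G}^n_{t,r}$ are independent, and $\mathcal{G}^n_{t,r}$ was crafted to contain exactly the increments of $A^n$, $S^n$, $R^n$, and $\Hat{L}^n$ beyond the switching time $\tau^n(t)$. The technical subtlety is that (iii)(b) involves the shifted time $\tau^n(t)\ge t$ rather than $t$ itself, so one must unwind this shift --- for example, by stopping at the successive return times of $J^n$ to state $1$ and applying a strong-Markov-type argument together with the tower property --- in order to transfer the stated independence into the conditional independence at time $t$ needed for the compensator identity. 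Once this is done, the classical martingale arguments apply unchanged.
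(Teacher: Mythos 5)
Your decomposition of \cref{HatX} into adapted finite-variation drift terms plus the compensated processes $\Hat{A}^n$, $\Hat{S}^n$, $\Hat{R}^n$ is a reasonable starting point, but the step on which everything hinges --- that these compensated processes are $\mathbb{F}^n$-(local) martingales --- is exactly where your argument has a genuine gap. You propose to obtain the required conditional independence at time $t$ from the non-anticipativity condition (iii)(b) of \cref{D2.1} by ``unwinding'' the shift $t\mapsto\tau^n(t)$ via a strong-Markov argument at the return times of $J^n$ to state $1$. This does not work as stated: condition (iii)(b) says nothing about the increments of $A^n,S^n,R^n$ on the interval $[t,\tau^n(t))$, and the strong Markov property of $J^n$ gives no control over the dependence between $\mathcal{F}^n_t$ and the future increments of the internal clocks of $S_{*,i}$ and $R_{*,i}$; that dependence is created by the feedback of the policy $Z^n$ on the past jumps of precisely those driving processes, not by the modulating chain. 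In the paper, (iii)(b) is used only to prove non-anticipativity of the limit in \cref{T2.1}(iv); it is neither the natural source of, nor sufficient for, the compensator identity you need.

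The paper's proof takes a different and self-contained route based on the primitives: it works with the compensated unit-rate Poisson processes $S_{*,i}(t)-t$ and $R_{*,i}(t)-t$, shows that the random time changes $\uptau^n_{1,i}(t)=\int_0^t\mu_i^n(J^n(s))Z^n_i(s)\,\D{s}$ and $\uptau^n_{2,i}(t)=\int_0^t\gamma_i^n(J^n(s))Q^n_i(s)\,\D{s}$ are stopping times for a multiparameter filtration containing the full paths of $A^n,J^n,Z^n$, verifies the moment bounds $\Exp[\uptau^n_{j,i}(t)]<\infty$ and $\Exp[S_{*,i}(\uptau^n_{1,i}(t))]<\infty$, and then invokes the random-time-change martingale results (\cite[Lemma 3.2]{PTW07} and \cite[Theorem 8.7, p.~87]{Kurtz86}, with \cite[Lemma 3.1]{JMTW17} handling the $\Hat{L}^n$ part) to conclude that $\Hat{X}^n$ is a semimartingale with respect to an enlarged filtration $\widetilde{\mathbb{F}}^n$; since $\Hat{X}^n$ is adapted to the smaller filtration $\mathbb{F}^n\subset\widetilde{\mathbb{F}}^n$, the semimartingale property passes down. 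If you want to keep a direct Br\'emaud/Watanabe-style verification with respect to $\mathbb{F}^n$, the independence input must come from the mutual independence of $A_{*,i},S_{*,i},R_{*,i}$ (and $J^n$) together with adaptedness of $Z^n$ (condition (iii)(a)), i.e., essentially the multiparameter time-change argument above --- not from condition (iii)(b).
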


\begin{definition}\label{D4.1}
Define the family of  operators $\cA^n_k\colon \Cc^2(\RR^d\times\Act) \mapsto \Cc^2(\RR^d\times \Act\times \cK)$ by
\begin{equation*}
\cA^n_kf(x,u) \;\df\;  \sum_{i\in \cI}\Big(b^n_i(x,u,k)\partial_if(x) + \frac{1}{2}\upsigma^n_i(x,u,k)\partial_{ii}f(x)\Big)\,,
\end{equation*}	
where the functions  $b^n_i,\upsigma_i^n \colon \RR^d \times \Act \times  \cK \mapsto \RR$ are defined by
$$ b^n_i(x,u,k)\;\df\; \ell^{n,k}_i - \mu_i^{n}(k)(x-{\langle e,x\rangle}^{+} u_i ) - \gamma_i^{n}(k){\langle e,x\rangle}^{+}u_i\,, $$
with 
$$\ell^{n,k}_i \df n^{-\beta}[(\lambda^n_i(k) - n\lambda_i^n(k)) - n\rho_i(\mu_i^n(k) -\mu_i(k))]\,,$$
 and 
$$\upsigma^n_{i}(x,u,k) \;\df\; n^{1-2\beta}\mu_i^n(k)\rho_i + \frac{\la_i^n(k)}{n^{2\beta}} +
\frac{\mu^n_i(k)(x_i-{\langle e,x \rangle}^{+} u_i ) +
	\gamma^n_i(k){\langle e,x\rangle}^{+} u_i}{n^{\beta}}  $$
for $i\in\cI$ and $k\in\cK$, respectively. 
\end{definition}
Let  $G^n$ denote the $k$--dimensional process 
$$
G^n_k(t)  \;\df\; \Ind(J^n(t) = k) - \Ind(J^n(0) = k)\,, \quad t\ge 0\,,
$$
and $B^n$ denote the $d$--dimensional processes defined by
\begin{equation}\label{ES4.2A} 
%G^n_k(t) \df \Ind(J^n(t)=k) - \Ind(J^n(0)=k)\,,\quad \text{and} \quad
B^n_i(t) \df n^{-\nicefrac{\alpha}{2} + \delta_{0}}\sum_{k\in\cK}(\lambda_i(k) - \rho_i\mu_i(k)) \left[(G^n(t))'\Upsilon\right]_k \,, \quad t\ge0\,,
\end{equation}
for $i\in\cI$, $k\in\cK$, where $\delta_0 \df (1 - \beta) -\frac{\alpha}{2}$.
Then we have the following result, 
which shows that all the long--run average absolute moments of the diffusion--scaled process are finite.
The proof is given in \cref{S6}. 
\begin{lemma}\label{L4.1}
Under any sequence
of admissible scheduling polices $\{\Hat{U}^n\colon n\in \NN\}$ 
%and $\Hat{U}^n \in \widehat{\Uadm}^n$with $\widehat{\Uadm}^n$ defined in \cref{S2.1},
%{\color{magenta} refer to where $\widehat{\Uadm}^n$ is defined } 
such that \newline 
$\sup_n\mathfrak{J}^n(\Hat{X}^n(0),\Hat{U}^n) <\infty$,
we have
\begin{equation}\label{EL4.1A}
\sup_n \limsup_{T\rightarrow\infty} \frac{1}{T}\Exp^{\Hat{U}^n}\left[\int_{0}^{T} \abs{\Hat{X}^n(s)}^m\,\D{s}\right]
\;<\; \infty 
\end{equation}
for $m$ defined in \cref{E2.3A}. 
%{\color{magenta} If $\sup_n\Hat{V}_n(\Hat{X}^n(0)) <\infty$, any and all all policies $\{\Hat{U}^n\colon n\in %\NN\}$ satisfies the condition \enquote{sequence
%of scheduling polices $\{\Hat{U}^n\colon n\in \NN\}$ and $\Hat{U}^n \in \widehat{\Uadm}^n$
%with $\widehat{\Uadm}^n$ defined in \cref{S2.1},
%%{\color{magenta} refer to where $\widehat{\Uadm}^n$ is defined } 
%such that $\sup_n\Hat{V}_n(\Hat{X}^n(0)) <\infty$,}. I believe you want a sequence of optimal scheduling %policies. Your writing does not imply this. Also you do not have to write $\Hat{U}^n \in \widehat{\Uadm}^n$ all %the time. at the onset say we only, unless otherwise mentioned talk about admissible scheduling polices.  }
\end{lemma}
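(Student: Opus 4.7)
The plan is to mimic the Foster--Lyapunov derivation behind \cref{T3.1}, but replacing the static priority scheduling policy $\tilde z^n$ by an arbitrary admissible $\Hat U^n$ and using the ergodic-cost hypothesis to absorb the queue contribution that can no longer be controlled by the scheduling structure alone. Concretely, I would take $V(x) = \sum_{i\in\cI}\xi_i |x_i|^m$ with $\xi$ as in \cref{L3.1} and work with the corrected Lyapunov function $\widehat V^n(x,k) = V(x) + g_n[V](x,k)$, where $g_n[V]$ is the Fredholm-alternative corrector from \cref{L3.2} constructed with $\Delta\Lg^{z^n}_{n,k}$ for the stationary Markov policy $z^n$ induced by $\Hat U^n$; the bound \cref{EL3.2B} then shows $\widehat V^n \asymp V$ for $n$ large.

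Next, I would apply It\^o's formula to $\widehat V^n(\Hat X^n(t), J^n(t))$, using the semi-martingale property of \cref{semi-martingale}. The generator computation is essentially identical to that in the proof of \cref{T3.1}, except that the identity $\tilde z^n_i(x) = \tilde x_i^n(x) - \tilde q_i^n(x) + n\rho_i$ used there is replaced by the weaker identity $\Hat Z^n_i = \Hat X^n_i - \Hat Q^n_i$, valid for any admissible policy. After grouping the service contribution $-m\mu_i^\pi\, \Hat Z_i^n \sign(\Hat X_i^n)\abs{\Hat X_i^n}^{m-1}$ with the abandonment contribution $-m\gamma_i^\pi\, \Hat Q_i^n \sign(\Hat X_i^n)\abs{\Hat X_i^n}^{m-1}$ and applying Young's inequality as in \cref{PL3.2A} to handle the cross-class and lower-order terms exactly as in the proof of \cref{T3.1}, the excess $\Hat Q^n$-contribution can be split off, yielding
\begin{equation*}
\widehat{\cL}^{\Hat U^n}_n \widehat V^n(x,k) \;\le\; C_1 - C_2 V(x) + C_3\bigl|\langle e,x\rangle^+ u\bigr|^m
\;\le\; C_1 - C_2 V(x) + (C_3/c)\,\rc(x,u).
\end{equation*}

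Integrating this on $[0,T]$, taking expectations, using $\widehat V^n \ge \tfrac12 V - \sorder(1)$ as in \cref{PT3.1F}, and sending $T\to\infty$ should then give
\begin{equation*}
\tfrac{C_2}{2}\limsup_{T\to\infty}\tfrac{1}{T}\Exp^{\Hat U^n}\!\!\int_0^T V(\Hat X^n(s))\,\D s \;\le\; C_1 + \tfrac{C_3}{c}\,\mathfrak{J}^n(\Hat X^n(0),\Hat U^n),
\end{equation*}
so that taking $\sup_n$ and invoking the hypothesis $\sup_n \mathfrak{J}^n < \infty$ yields \cref{EL4.1A} for $n$ large, the finitely many small $n$ being absorbed into the constant.

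The main obstacle is the localization step needed to pass rigorously from the generator inequality to this expectation inequality: under an arbitrary $\Hat U^n$ we have no a priori moment bound on $\Hat X^n(t)$ at a fixed time, so optional sampling cannot be invoked directly. I would work instead with $\tau_R \wedge T$ for $\tau_R \df \inf\{t\colon \abs{\Hat X^n(t)} \ge R\}$; the compensated Poisson jumps of $\widehat V^n$ have sizes of order $n^{-\beta} R^{m-1}$ with total intensity at most $O(n)$ up to $\tau_R$, so the martingale part has finite second moment on $[0,\tau_R\wedge T]$ and its expectation vanishes. The Foster--Lyapunov inequality then produces a uniform-in-$R$ bound on $\Exp V(\Hat X^n(\tau_R\wedge T))$, which by Fatou's lemma yields finiteness of $\Exp V(\Hat X^n(T))$ and permits taking the limit $R \to \infty$ before passing to the ergodic limit $T\to\infty$. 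The auxiliary process $B^n$ of \cref{ES4.2A}, being bounded in sup-norm by a negative power of $n$, may be used in an alternative version of this argument where one replaces $\Hat X^n$ by $\Hat X^n + B^n$ to linearize the Markov-chain fluctuations out of the drift before applying It\^o.
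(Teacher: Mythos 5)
The central idea you share with the paper---that the uncontrollable queue contribution $(\langle e,x\rangle^+)^m$ can be surrendered to the running cost, since $\rc(x,u)=c\abs{\langle e,x\rangle^+u}^m$ dominates a constant multiple of $(\langle e,x\rangle^+)^m$ on $\Act$, and then absorbed by the hypothesis $\sup_n\mathfrak{J}^n<\infty$---is exactly the mechanism the paper uses. But your main route to the drift inequality has a genuine gap. \cref{L4.1} is stated for arbitrary admissible policies in the sense of \cref{D2.1}: these are adapted, work-conserving, possibly path-dependent, and in general \emph{not} stationary Markov. There is no ``stationary Markov policy $z^n$ induced by $\Hat U^n$'', the pair $(\Hat X^n,J^n)$ need not be Markov, and the Poisson-equation corrector $g_n[V](x,k)$ of \cref{L3.2} is built from $z^n(x)$ and $q^n(x,z)$ as \emph{functions of the state}; for a path-dependent policy it is simply not a function of $(x,k)$, so It\^o's formula cannot be applied to $\widehat V^n(\Hat X^n(t),J^n(t))$. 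Even if you restrict to Markov policies, the step in the proof of \cref{T3.1} that controls $\Lg^{\tilde z_n}_{n,k}g_n[f_n]$ leans on structural properties of the static priority policy (increments $\abs{\tilde q^n_i(x\pm e_j)-\tilde q^n_i(x)}\le 1$ and the representation \cref{PL3.1D}, abstracted as conditions (i)--(ii) in \cref{L3.3}); an arbitrary Markov policy can reshuffle the entire queue when the state changes by one customer, and then the analogue of $F^{(1)}_{n,i}$, $F^{(2)}_{n,i}$ is not bounded as claimed. So your asserted inequality $\widehat{\cL}^{\Hat U^n}_n\widehat V^n\le C_1-C_2V+C_3\abs{\langle e,x\rangle^+u}^m$ is unsubstantiated on the class of policies the lemma actually covers.

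The paper sidesteps all of this by using a \emph{policy-independent} corrector: the process $B^n$ of \cref{ES4.2A}, built only from $J^n$ and the deviation matrix $\Upsilon$, makes $\Hat L^n+B^n$ a martingale, and the Kunita--Watanabe formula is applied to $g(\tilde X^n)$ with $\tilde X^n=\Hat X^n+B^n$ and $g(x)=\sum_i\abs{x_i}^m$ (smoothed near the origin). The point is that once the order-$n^{1-\beta}$ modulation term $\Hat L^n$ is removed this way, the remaining $k$-dependence of the drift $b^n_i(x,u,k)$ in \cref{D4.1} is multiplied by the \emph{state}, not by $n$, so it is bounded \emph{uniformly in $k$} by Young's inequality, with no Poisson equation and no averaging over $\pi$: this yields \cref{PL4.4B}, the covariation and jump terms are handled likewise, and the transfer back to $\Hat X^n$ uses $\sup_t\abs{\Hat X^n-\tilde X^n}\le\bar C_0 n^{-\nicefrac{\alpha}{2}}$. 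Your closing remark about replacing $\Hat X^n$ by $\Hat X^n+B^n$ is precisely this argument, but you leave it undeveloped; had you made it the main route, the proof would go through. Your localization concern, finally, is not an obstacle: the crude bound $X^n_i(t)\le X^n_i(0)+A^n_i(t)$ gives finite moments of $\Hat X^n(t)$ at each fixed $t$, so the expectation identity holds without an elaborate stopping-time scheme.
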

%{\color{magenta}Look at the definition of $\mathfrak{J}^n(\Hat{X}^n(0),\Hat{U}^n)$ and \cref{E2.3A} , %\cref{L4.1} should be  trivial from these definitions. Why is there a long proof in the appendix, am I %missing something?  } {\color{blue} check equation (5.18)}
\begin{definition}\label{D4.2}
	Define the mean empirical measure $\zeta^n_T\in\cP(\RR^d\times\Act)$ associated with $\Hat{X}^n$ and $\Hat{U}^n$ by
	$$ \zeta^n_T(A\times B) \;\df\; \frac{1}{T}\Exp\left[\int_{0}^{T}\Ind_{A\times B}(\Hat{X}^n(s),\Hat{U}^n(s))\,\D{s}\right]  $$
	for any Borel sets $A\subset\RR^d$ and $B\subset\Act$. 
\end{definition}

Note that the sequence $\{\zeta^n_T\}$ is tight by \cref{L4.1}. 
The next lemma shows
%convergence of the mean empirical measures. 
that the sequence $\{\zeta^n_T\}$ converges,
along some subsequence, to an ergodic occupation measure associated with the limiting diffusion process under some stationary stable Markov control.
%\ftn{AA: isn't this the  subject of \cref{L4.2}; only tightness follows from
%\cref{L4.1} alone, no more.}

\begin{lemma}\label{L4.2}	
Suppose under some sequence of admissible scheduling polices $\{\Hat{U}^n\colon n\in \NN\}$,
\cref{EL4.1A} holds.
Then $\uppi$ is in $\eom$, 
where $\uppi\in\cP(\RR^d\times\Act)$ is any limit point of ${\zeta}^n_T$ as 
$(n,T)\rightarrow\infty$.
\end{lemma}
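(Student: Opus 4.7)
The plan is to verify the characterisation
\begin{equation*}
\int_{\RR^d \times \Act} \cL_u f(x) \, \uppi(dx, du) \;=\; 0 \qquad \forall f \in \Cc^\infty_c(\RR^d)
\end{equation*}
of ergodic occupation measures, which, together with the moment bound \cref{EL4.1A}, places $\uppi$ in $\eom$. Since the generator $\cA^n_k$ of $\Hat X^n$ from \cref{D4.1} depends explicitly on the background state $k$ while $\cL_u$ corresponds to the $\pi$-averaged coefficients, the strategy is to work with an auxiliary semimartingale $\widetilde X^n$ built from $\Hat X^n$ and $B^n$ in which the Markov modulation has been averaged out.

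Concretely, I set $\widetilde X^n \df \Hat X^n - B^n$. The prefactor $n^{-\nicefrac{\alpha}{2} + \delta_0}$ in \cref{ES4.2A} equals $n^{1 - \beta - \alpha}$, and one checks that $\beta + \alpha > 1$ for $\beta = \max\{\nicefrac{1}{2}, 1 - \nicefrac{\alpha}{2}\}$ in every regime of $\alpha$. Together with the pathwise bound $\abs{(G^n(t))' \Upsilon} \le 2\norm{\Upsilon}$, this gives $\sup_{t \ge 0} \abs{B^n(t)} = o(1)$ deterministically, so the mean empirical measures $\zeta^n_T$ and $\widetilde \zeta^n_T$ of $(\Hat X^n, \Hat U^n)$ and $(\widetilde X^n, \Hat U^n)$ are asymptotically equivalent and share the same limit point $\uppi$.

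The next step is to apply Dynkin's formula to $f(\widetilde X^n(t))$ for $f \in \Cc^\infty_c(\RR^d)$. The role of $B^n$ is precisely that it encodes the Poisson solution $n^{\alpha} \cQ \psi(\cdot,\cdot,k) = \cA^n_k f - \overline{\cA}^n f$ built from $\Upsilon = (\Pi - \cQ)^{-1} - \Pi$, where $\overline{\cA}^n f \df \sum_{k \in \cK} \pi_k \cA^n_k f$, so that subtracting $B^n$ replaces the $k$-dependent generator by $\overline{\cA}^n$ up to an $\order(n^{-\beta})$ correction on compacts. Taking expectations and dividing by $T$ yields
\begin{equation*}
\frac{1}{T}\, \Exp\bigl[f(\widetilde X^n(T)) - f(\widetilde X^n(0))\bigr] \;=\; \int_{\RR^d \times \Act} \overline{\cA}^n f(x, u) \, \widetilde \zeta^n_T(dx, du) + r^n(T),
\end{equation*}
with $r^n(T) \to 0$ as $(n,T) \to \infty$ by virtue of \cref{EL4.1A}. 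The left-hand side is $O(T^{-1})$ because $f$ is bounded, so $\int \overline{\cA}^n f \, d\widetilde\zeta^n_T \to 0$ along the subsequence on which $\widetilde \zeta^n_T \Rightarrow \uppi$. By \cref{A2.1}, $\overline{\cA}^n f \to \cL_u f$ uniformly on compacts of $\RR^d \times \Act$, and \cref{EL4.1A} with $m$ chosen large enough supplies the uniform integrability needed to absorb the linear-in-$x$ growth of the drift part of $\overline{\cA}^n f$. Passing to the limit delivers the desired identity.

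The hard part I anticipate is controlling the residual $r^n(T)$. The process $\Hat X^n$ jumps at rate $\order(n)$ while $J^n$ jumps at rate $\order(n^{\alpha})$, and these two sources interact through the cross-terms of It\^o applied to $\psi(\Hat X^n, \Hat U^n, J^n)$. Ensuring that the resulting corrections are uniformly $o(1)$ in $T$ across the three regimes $\alpha < 1$, $\alpha = 1$, $\alpha > 1$ rests on the specific scaling $\beta = \max\{\nicefrac{1}{2}, 1 - \nicefrac{\alpha}{2}\}$ and the choice $\delta_0 = (1-\beta) - \nicefrac{\alpha}{2}$, and requires invoking \cref{EL4.1A} with $m$ large enough to absorb the polynomial factors of $\abs{\Hat X^n}$ arising from $(\cA^n_k - \overline{\cA}^n) f$.
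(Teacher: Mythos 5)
Your overall template---introduce the auxiliary process built from $B^n$, prove an approximate stationarity identity for the mean empirical measures, and let $(n,T)\to\infty$---is the paper's, but the way you use $B^n$ contains a genuine gap that changes the limiting operator. The process $B^n$ in \cref{ES4.2A} is a fixed functional of $G^n$, independent of the test function $f$; it cannot ``encode the Poisson solution of $n^{\alpha}\cQ\psi=\cA^n_kf-\overline{\cA}^nf$'' for every $f$, and subtracting (or adding) it does not replace the modulated generator by its $\pi$-average. Its actual role is to compensate the order-one additive functional $\Hat{L}^n$: by Lemma 3.1 of \cite{ABMT}, $\Hat{L}^n+B^n$ is a martingale, so in the semimartingale (Kunita--Watanabe) expansion of $f(\tilde{X}^n(T))$ the term $\Exp\bigl[\int_0^T\partial_if(\tilde{X}^n(s))\,\D\bigl(\Hat{L}^n_i+B^n_i\bigr)(s)\bigr]$ drops out. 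Your Dynkin-formula step ignores $\Hat{L}^n$ altogether, yet $\Hat{L}^n$ is not part of $\cA^n_k$ and is not negligible ($\Hat{L}^n\Rightarrow\sigma^L_\alpha\widetilde{W}$ for $\alpha\le1$); its interaction with $\Hat{X}^n$ is precisely what produces the extra covariance in the limit. Relatedly, the replacement of $\Ind(J^n(s)=k)$ by $\pi_k$ in the generator terms is a separate averaging step (justified in the paper via Proposition 3.2 of \cite{ABMT} and Theorem 5.2 of \cite{JMTW17}), not a consequence of the $B^n$ correction.

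As a consequence, your claim that $\overline{\cA}^nf\to\cL_uf$ uniformly on compacts is false when $\alpha\le1$: one has $\sum_k\pi_k\upsigma^n_i(x,u,k)\to2\Ind(\alpha\ge1)\lambda^{\pi}_i$, so the limit of $\overline{\cA}^nf$ misses the $\tfrac12\sum_{i,i'}\theta_{ii'}\partial_{ii'}f$ part of $\cL_u$ in \cref{def-Lu} (for $\alpha<1$ your limiting operator is purely first order). In the paper this $\Theta$ term comes from the optional quadratic covariation $[B^n_i,B^n_{i'}]$: although $\sup_t\abs{B^n(t)}=\order(n^{-\nicefrac{\alpha}{2}})$ (your estimate, which is correct and matches \cref{PL4.1E}), the jumps of $B^n$ occur at rate $n^{\alpha}$, so $[B^n](T)$ is of order $T$ when $\alpha\le1$ and contributes $\Theta$ in the limit, together with jump-correction terms $\cD f$ that must be shown to vanish using \cref{EL4.1A}. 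Treating $B^n$ as a mere $\sorder(1)$ perturbation (legitimate only for identifying the limit points of $\zeta^n_T$ and $\tilde\zeta^n_T$) and then passing to $\overline{\cA}^n$ would identify $\uppi$ as an ergodic occupation measure of the wrong diffusion whenever $\alpha\le1$. To repair the argument you must keep the $\D\Hat{L}^n$ term and cancel it against $\D B^n$ via the martingale property, compute the $[B^n,B^n]$ contribution and show it converges to the $\Theta$ term after the indicator-to-$\pi_k$ replacement, and control the jump corrections with \cref{EL4.1A}---which is exactly the paper's route.
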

\begin{proof}
We construct a related stochastic process $\tilde{X}^n$ to prove this lemma.
Let $\tilde{X}^n$ be the $d$--dimensional process defined by
\begin{equation}\label{PL4.2}
\tilde{X}^n \df \Hat{X}^n + B^n\,, 
\end{equation}	
where $B^n$ is defined in \eqref{ES4.2A}.
Applying Lemma 3.1 in \cite{ABMT} and \cref{semi-martingale}, 
$\tilde{X}^n$ is also a semi--martingale.
We first consider the case with $\alpha \le 1$. 
Using the Kunita--Watanable formula for semi--martingales 
(see, e.g., \cite{Protter05}, Theorem II.33) with $\Exp = \Exp^{\Hat{U}^n}$, we obtain 	
	\begin{align}\label{PL4.2A}
	&\;\frac{\Exp\big[f(\tilde{X}^n(T)) - f(\tilde{X}^n(0))\big]}{T} \nonumber \\ 
	&\;=\; 
	\frac{1}{T}\Exp\left[\int_{0}^{T}\sum_{k\in\cK}\cA^n_k\, f(\tilde{X}^n(s),\Hat{U}^n(s))\Ind(J^n(s)=k)\,\D{s}\right] 
	 \nonumber
	\\ &\; \qquad +
	\frac{1}{T}\Exp\left[\sum_{i\in\cI}\int_{0}^{T} \partial_i f(\tilde{X}^n(s))\,\D \Hat{L}^n_i(s)\right] + \frac{1}{T}\Exp\left[\sum_{i\in\cI}\int_{0}^{T} 
	\partial_i f(\tilde{X}^n(s))\,\D B_i^n(s) \right]   \nonumber \\
	& \qquad+ \frac{1}{T}\Exp\left[\sum_{i,i'\in\cI}\int_{0}^{T} \partial_{ii'} f(\tilde{X}^n(s))\,\D\,[B^n_i,B^n_{i'}](s) \right]  + \frac{1}{T}\Exp\bigg[\sum_{s\le T} \cD f(\tilde{X}^n,s)\bigg] 
	\end{align}
for
%{\color{magenta} It should be $\D (B_i^n(s)+ L_i^n(s) )$  instead of $\D B_i^n(s)$, where $L_i^n(s)$ is as %in \eqref{defn_of_ell_L_etc}, by Lemma 3.1 of \cite{ABMT} $B_i^n(s)+ L_i^n(s)$ is a martingale, and hence %the third term disappears.}
any $f\in \Cc^{\infty}_c(\RR^d)$, where 
\begin{equation*}
\mathcal{D}f(\tilde{X}^n,s) \;\df\; \Delta f(\tilde{X}^n(s)) - \sum_{i\in\cI}\partial_if(\tilde{X}^n(s-))\Delta\tilde{X}^n_i(s) - \frac{1}{2}\sum_{i,i'\in\cI}\partial_{ii'}f(\tilde{X}^n(s-))\Delta \tilde{X}^n_i(s)\Delta \tilde{X}^n_{i'}(s)
\end{equation*}
for $s\ge0$.
Using  \cite[Lemma 3.1]{ABMT}, $B_i^n(s)+ \Hat{L}_i^n(s)$ is a martingale, and hence the sum of the second and third terms of \cref{PL4.2A} is equal to zero.
By equation (8) in \cite{ABMT} and 
the same calculation as in equation (10) of \cite{ABMT}, the fourth term on the r.h.s. of \cref{PL4.2A} can be written as 
\begin{equation*}
\frac{2}{T}\Exp\left[\sum_{i,i'\in\cI}\int_{0}^{T}\sum_{k\in\cK}\sum_{k'\in\cK} \big(\la_i(k) - \rho_i \mu_i(k)\big) \big(\la_{i'}(k') - \rho_{i'} \mu_{i'}(k')\big)\Upsilon_{kk'}\partial_{ii'}f(\tilde{X}^n(s))\Ind(J^n(s) = k) \,\D{s} \right]\,.
\end{equation*}
Note that for any $f\in\Cc^{\infty}_c(\RR^d)$,
\begin{multline*}
\limsup_{(n,T) \to \infty}\frac{1}{T} 
\Exp\left[\int_{0}^{T} f(\tilde{X}^n(s),\Hat{U}^n(s))\Big(\Ind(J^n(s)=k) - \pi_k\Big)\,\D{s} \right] \\
\;=\; \limsup_{(n,T) \to \infty}\frac{1}{T}
\Exp\left[\int_{0}^{T} n^{-\nicefrac{\alpha}{2}}f(\tilde{X}^n(s),\Hat{U}^n(s))\,
\D \left( \int_{0}^{s}n^{\nicefrac{\alpha}{2}}\big(\Ind(J^n(u)=k) - \pi_k\big)\,\D{u} \right)\right] \;=\; 0
\end{multline*}
by the boundedness of $f$, \cite[Proposition 3.2]{ABMT} and \cite[Theorem 5.2]{JMTW17}.
%\ftn{Normally we use $\Ind(J^n(s) = k)$.
%The notation  $\Ind_k(J^n(u))$ appears for the first time.}
%{\color{magenta}A.D: I think you must replace $\Hat{X}^n$ with $\tilde{X}^n$ in the above statement}
Thus, we can replace $\Ind(J^n(s)=k)$ by $\pi_k$ for all $k\in\cK$ in \cref{PL4.2A}, when we let $(n,T)\rightarrow\infty$.

We next prove that the last term on the r.h.s. of \cref{PL4.2A} vanishes as
$(n,T)\rightarrow\infty$.
Let
$$
\norm{f}_{\Cc^3} \;\df\; \sup_{x\in\RR^d}\Big(\abs{f(x)} +
 \sum_{i,j\in\cI}\abs{\partial_{ij}f(x)} + \sum_{i,j,k\in\cI}\abs{\partial_{ijk}f(x)}\Big)\,.
$$
Since the jump size of $\tilde{X}^n$ is of order $n^{-\nicefrac{\alpha}{2} + \delta_0}$ or $n^{-\beta}$, 
then by Taylor's formula, we have
\begin{equation}\label{PL4.2C}
\abs{\cD f(\tilde{X}^n,s)} \;\le\;
\frac{\Hat{c}_0\norm{f}_{\Cc^3}}{n^{\nicefrac{\alpha}{2}}}\sum_{i,i'\in\cI}\abs{\Delta\tilde{X}^n_i(s)\Delta\tilde{X}^n_{i'}(s)}
\end{equation}
for some positive constant $\Hat{c}_0$ independent of $n$. 
By equation (2) in \cite{ABMT}, and the independence of Poisson processes, we obtain
\begin{multline}\label{PL4.2D}
\frac{1}{T}\Exp\bigg[\sum_{s\le T}\sum_{i,i'\in\cI}\abs{\Delta\tilde{X}^n_i(s)\Delta\tilde{X}^n_{i'}(s)} \bigg]\\
= \frac{1}{T}\Exp\bigg[\int_0^{T}\sum_{k\in\cK}
\sum_{k\neq k',k'\in\cK}\Hat{c}_k\Big(q_{kk'}\Ind(J^n(s)=k) + q_{k'k}\Ind(J^n(s)=k') \Big)
 \\
+ 
\sum_{i\in\cI}\bigg(\frac{\lambda_i(J^n(s))}{n^{2\beta}} +
\frac{\mu_i^n(J^n(s))Z^n_i(s)}{n^{2\beta}} + \frac{\gamma_i^n(J^n(s))Q^n_i(s)}{n^{2\beta}}\bigg) \,\D{s}\bigg] \,,
\end{multline}
where $\{\Hat{c}_{k}\colon k\in\cK\}$ are determined by the constants in \cref{ES4.2A}.
Using \cref{EL4.1A}, the r.h.s. of \cref{PL4.2D} is uniformly bounded over $n\in\NN$ 
and $T>0$. Therefore, by \cref{ES4.2A,PL4.2C}, 
the last term on the r.h.s. of \cref{PL4.2A} converges to $0$ as
$(n,T)\rightarrow\infty$. 

As in \cref{D4.2},  let $\tilde{\zeta}^n_T \in\cP(\RR^d\times\Act)$ denote the mean empirical measure associated with $\tilde{X}^n$ and $\Hat{U}^n$, that is, 
$$ \tilde{\zeta}^n_T(A\times B) \;\df\; \frac{1}{T}\Exp\left[\int_{0}^{T}\Ind_{A\times B}(\Tilde{X}^n(s),\Hat{U}^n(s))\,\D{s}\right]  $$
	for any Borel sets $A\subset\RR^d$ and $B\subset\Act$. 
Then, by \cref{PL4.2A} and the above analysis, for $f\in\Cc^{\infty}_c(\RR^d)$, we have
\begin{equation}\label{PL4.2E}
 \limsup_{(n,T) \to \infty} \int_{\RR^d\times\Act}\bigg(\sum_{k\in\cK}\cA^n_k\, f(x,u) \pi_k + 
 \Ind(\alpha \le 1)\sum_{i,i'\in\cI}\theta_{ii'}\partial_{ii'}f(x)\bigg)\,\tilde{\zeta}^n_T(\D{x},\D{u})  \;=\; 0\,.
\end{equation}
Note that for $i\in\cI$, $\sum_{k\in\cK}b^n_i(x,u,k)\pi_k$ and $\sum_{k\in\cK}\upsigma_i^n(x,u,k)\pi_k$ converge uniformly over compact sets in $\RR^d\times\Act$, to  $b_i$ (see \eqref{ET2.1A}) and $2\Ind(\alpha\ge1)\lambda^{\pi}_i$, respectively.

On the other hand, by the definition of $B^n$,  we have that
\begin{equation}\label{PL4.1E}
\sup_t\,\babs{\Hat{X}^n(t) - \tilde{X}^n(t)}\;\le\; n^{-\nicefrac{\alpha}{2}} \Bar{C}_0 
\end{equation}
for some positive constant $\Bar{C}_0$.
By \cref{EL4.1A,PL4.1E}, we deduce that $\big\{\tilde{\zeta}^n_T\big\}$ is tight.
Let $(n_l,T_l)$ be any sequence such that $\tilde{\zeta}^n_T$ converges to $\tilde{\uppi}$, as $(n_l,T_l)\rightarrow\infty$. 
Hence, for any $f\in\Cc^{\infty}_c(\RR^d)$, we have
$$\int_{\RR^d\times\Act} \cL_u f(x)\,{\tilde{\uppi}}(\D{x}\times \D{u}) = 0 \quad \text{for } \alpha\le1\,,$$
with $\cL_u$ defined in \cref{def-Lu}.
Using \cref{PL4.1E}, we obtain that
%$$\limsup_{(n,T)\rightarrow\infty}\frac{1}{T}\Exp\left[\int_{0}^{T}\abs{\Hat{X}^n(s) - \tilde{X}^n(s)} %\,\D{s}\right] = 0 \,,$$ 
$\zeta^n_T$ and $\tilde{\zeta}^n_T$ have same limit points.
Therefore, as $(n,T)\rightarrow\infty$, any limit point $\uppi$ of $\zeta^n_T$ satisfies
$$\int_{\RR^d\times\Act} \cL_u f(x)\,{{\uppi}}(\D{x}\times \D{u}) = 0 \quad \text{for } \alpha\le1\,.$$
When $\alpha >1$, the proof is the same as above. 
This completes the proof.
\end{proof}

\begin{proof}[Proof of \cref{T2.3}.]
Without loss of generality, suppose $\Hat{V}^{n_l}(\Hat{X}^{n_l}(0))$ for some increasing sequence $\{n_l\}\subset\NN$ converges to a finite value, as $l\rightarrow\infty$, and $\Hat{U}^{n_l}\in\widehat{\Uadm}^{n_l}$.
By the definitions of $\Hat{V}^n$, and the mean empirical measure $\zeta^n_T$ in \cref{D4.2}, there exists a sequence of $\{T_l\}\subset\RR_+$ with $T_l\rightarrow\infty$, such that
$$ 
\Hat{V}^{n_l}(\Hat{X}^{n_l}(0)) + \frac{1}{l}\;\ge\;
 \int_{\RR^d\times\Act}\rc(x,u)\,\zeta^{n_l}_{T_l}(\D{x},\D{u}) \,.
$$   
By \cref{L4.1} and \cref{L4.2},  $\{\zeta^{n_l}_{T_l}\colon l\in\NN\}$ is tight and any limit point of $\zeta^{n_l}_{T_l}$ is in $\eom$.
Thus
\begin{equation*}
\lim_{l\rightarrow\infty} \Hat{V}^{n_l}(\Hat{X}^{n_l}(0))\;\ge\; \int_{\RR^d\times\Act} \rc(x,u)\,{\uppi}(\D{x},\D{u}) \;\ge\; \varrho_*\,.
\end{equation*}
This completes the proof.
\end{proof}

\subsection{Proof of the upper bound for the ergodic problem}\label{S4.4}

We have the following theorem concerning the upper bound. 

\begin{theorem}[upper bound]\label{T2.4}
It holds that
	$$\limsup_{n\rightarrow\infty}\;\Hat{V}^n(\Hat{X}^n(0))\;\le\;\varrho_*(x)\,. $$	\end{theorem}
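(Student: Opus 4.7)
The plan is to construct, for each $\epsilon>0$, a sequence of admissible scheduling policies whose long-run average cost is bounded above by $\varrho_* + \epsilon$ asymptotically, and then let $\epsilon\downarrow 0$. The key ingredients are already assembled in the paper: \cref{T4.3} supplies a nearly optimal continuous precise control $v_\epsilon\in\Ussm$ for the limiting diffusion, agreeing with $e_d$ outside a ball $B_R$, with invariant measure $\nu_\epsilon$ satisfying $\int \rc(x,v_\epsilon(x))\,\nu_\epsilon(\D x) \le \varrho_* + \epsilon$; \cref{D3.6} provides a canonical lifting of $v_\epsilon$ to scheduling policies $z^n[v_\epsilon]$; and \cref{L3.3} together with the remark that follows guarantees that the resulting policies are stabilizing.

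Given $\epsilon > 0$, I would fix $v_\epsilon$ and take $\Hat{U}^{n,\epsilon}$ to be the proportions-scheduling policies induced by $z^n[v_\epsilon]$. By \cref{L3.3} the pair $(\Hat{X}^n, J^n)$ under this policy satisfies a Foster--Lyapunov inequality analogous to \cref{ET3.1B}, and hence by the proof of \cref{T3.1} one obtains the uniform moment bound
\begin{equation*}
\sup_n \limsup_{T\to\infty} \frac{1}{T} \Exp^{\Hat{U}^{n,\epsilon}}\left[\int_0^T \abs{\Hat{X}^n(s)}^{m'}\,\D s\right] \;<\; \infty \qquad \forall\,m' > 0.
\end{equation*}
This yields tightness of the mean empirical measures $\zeta^{n,\epsilon}_T$ from \cref{D4.2} and uniform integrability of the polynomially growing cost $\rc$ along these measures. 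In particular $\sup_n \mathfrak{J}^n(\Hat{X}^n(0),\Hat{U}^{n,\epsilon}) < \infty$, so \cref{L4.2} applies and any subsequential limit $\uppi^\epsilon$ of $\zeta^{n,\epsilon}_T$ lies in $\eom$.

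To identify $\uppi^\epsilon$, the plan is to exploit the fact that in the bulk region $\{\sup_i\abs{x_i - n\rho_i}\le \kappa n\}$ of \cref{D3.6}, the estimate $\abs{\upomega(y)-y}\le 2d$ (used already in the proof of \cref{T2.2}) translates into
\begin{equation*}
\babs{\Hat{Q}^n(t) - \langle e,\Hat{X}^n(t)\rangle^+ v_\epsilon(\Hat{X}^n(t))} \;\le\; 2d\,n^{-\beta}.
\end{equation*}
Combining this with the identity $\rc(\Hat{X}^n(t), \Hat{U}^{n,\epsilon}(t)) = \widetilde{\rc}(\Hat{Q}^n(t))$, the continuity of $v_\epsilon$, the local H\"older continuity of $\widetilde{\rc}$, and the uniform moment bound (which controls both the rare excursions of $\Hat{X}^n$ outside the bulk and the polynomial factors that appear when estimating differences $\widetilde{\rc}(y) - \widetilde{\rc}(y')$) gives
\begin{equation*}
\limsup_{(n,T)\to\infty} \Babs{\int \rc\,\D\zeta^{n,\epsilon}_T - \int \rc(x, v_\epsilon(x))\, \eta^{n,\epsilon}_T(\D x)} \;=\; 0,
\end{equation*}
where $\eta^{n,\epsilon}_T$ denotes the first marginal of $\zeta^{n,\epsilon}_T$. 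Since $v_\epsilon$ coincides with the constant control $e_d$ outside $B_R$, the proposition and remark at the start of \cref{S3} imply that the limiting diffusion is exponentially ergodic under $v_\epsilon$ with unique invariant measure $\nu_\epsilon$; combined with $\uppi^\epsilon \in \eom$, this forces the first marginal of $\uppi^\epsilon$ to equal $\nu_\epsilon$.

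Putting these pieces together yields
\begin{equation*}
\limsup_{n\to\infty}\mathfrak{J}^n(\Hat{X}^n(0),\Hat{U}^{n,\epsilon}) \;\le\; \int_{\RR^d}\rc(x,v_\epsilon(x))\,\nu_\epsilon(\D x) \;\le\; \varrho_* + \epsilon,
\end{equation*}
so $\limsup_n \Hat{V}^n(\Hat{X}^n(0)) \le \varrho_* + \epsilon$, and letting $\epsilon\downarrow 0$ completes the proof. The main obstacle is the simultaneous control of three sources of error: the $2d\,n^{-\beta}$ rounding error from $\upomega$, the excursions of $\Hat{X}^n$ outside the bulk region (where the scheduling reverts to the static priority policy $\tilde{z}^n$ of \cref{D3.1}), and the generator-level averaging needed to identify the limit of the mean empirical measures as the ergodic occupation measure associated with $v_\epsilon$. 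The uniform moment bound from \cref{L3.3} is the common thread that tames all three.
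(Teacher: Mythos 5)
Your overall skeleton is the same as the paper's: lift the spatially truncated $\epsilon$-optimal control $v_\epsilon$ of \cref{T4.3} to scheduling policies, use \cref{L3.3} for a Foster--Lyapunov bound giving tightness and uniform integrability of the cost, pass to the limit in the mean empirical measures, and let $\epsilon\downarrow0$. However, there is a genuine gap at the identification step. You argue that since the limiting diffusion is exponentially ergodic under $v_\epsilon$ with unique invariant measure $\nu_\epsilon$, the fact that a subsequential limit $\uppi^\epsilon$ lies in $\eom$ ``forces'' its first marginal to be $\nu_\epsilon$. This is a non sequitur: \cref{L4.2} only tells you that $\uppi^\epsilon$ is an ergodic occupation measure for \emph{some} stationary stable Markov control, i.e.\ $\uppi^\epsilon(\D x,\D u)=\nu_{v}(\D x)\,v(\D u\,|\,x)$ for some $v\in\Ussm$, and uniqueness of the invariant measure under $v_\epsilon$ constrains nothing unless you first show that the control kernel of $\uppi^\epsilon$ is $\delta_{v_\epsilon(x)}$. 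Your cost-comparison estimate (the $2d\,n^{-\beta}$ rounding bound) compares $\int \rc\,\D\zeta^{n,\epsilon}_T$ with $\int\rc(x,v_\epsilon(x))\,\eta^{n,\epsilon}_T(\D x)$, but it does not identify the limit of the marginals $\eta^{n,\epsilon}_T$, which is exactly what your final inequality needs.

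The missing step is precisely the content of the paper's \cref{L4.6}: one shows that the lifted controls $u^n[v_\epsilon]$ converge to $v_\epsilon$ uniformly on compact sets (this does follow from the $\upomega$-rounding bound and the continuity of $v_\epsilon$, ingredients you already have but apply only to the cost), then feeds this into the generator identity underlying \cref{L4.2} to conclude that any limit $\nu$ of the marginals satisfies $\int_{\RR^d}\cL_{v_\epsilon}f\,\D\nu=0$ for all test functions $f$, whence $\nu=\nu_\epsilon$ by uniqueness of the invariant measure under $v_\epsilon$; the paper then upgrades this to convergence of the full occupation measures $\Hat{\zeta}^n_T\to\uppi_{v_\epsilon}$ using convergence $\delta_{u^n[v_\epsilon](\cdot)}\to\delta_{v_\epsilon(\cdot)}$ in the topology of Markov controls together with the continuity of the density of $\nu$ (via \cite[Lemma~2.4.1]{ABG12}). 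Once this identification is supplied, the remainder of your argument --- uniform integrability of the order-$m$ cost from the Lyapunov function of degree $2m$, the bound by $\varrho_*+\epsilon$, and $\epsilon\downarrow0$ --- coincides with the paper's proof.
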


The following lemma is used in the proof of the upper bound. The lemma shows that
 under a scheduling policy constructed from the $\epsilon$-optimal control  given in \cref{T4.3}, any limit of the mean empirical measures of the diffusion-scaled queuing processes is the ergodic occupation measure of the limiting diffusion under that control.

\begin{lemma}\label{L4.6}
For any fixed $\epsilon > 0$,
	let $\{\Hat{q}^n\colon n\in\NN\}$ 
	%{\color{magenta}A.D: This is the same scheme of constructing scheduling policies out of precise stable markov controls as outlined in \cref{D4.3}? If so please point this out here. } 
	be a sequence of maps such that 
	$$
	\Hat{q}_i^n[v](\Hat{x}) \;=\; \begin{cases} 
	\upomega \bigl(\langle e,n^{\beta}\Hat{x}\rangle^+v(\Hat{x})\bigr)  \quad 
	&\text{for } \quad \sup_{i\in\cI}{\abs{\Hat{x}_i}} \le \kappa n^{1-\beta}\,, \\
	\tilde{q}^n(n^{\beta}\Hat{x} + n\rho)
	\quad 
	&\text{for } \quad
	\sup_{i\in\cI}{\abs{\Hat{x}_i}} > \kappa n^{1-\beta}\,.
	\end{cases}  
	$$
	with $\tilde{q}^n$ defined in \cref{D3.1}, $\kappa$ in \cref{D3.6}, and 
	$v \equiv v_\epsilon$ in $\cref{T4.3}$. 
	For $\Hat{x}\in\RR^d$,  let
	$\Hat{z}^n[v](\Hat{x}) = n^{\beta}\Hat{x} + n\rho - \Hat{q}^n[v](\Hat{x}) $,
	and 
	\begin{equation*}
	u^n[v](\Hat{x}) \;\df\; \begin{cases}
	\frac{\Hat{q}^n[v](\Hat{x})}{\langle e,\Hat{q}^n[v](\Hat{x}) \rangle} \quad\quad &\text{if}\; \quad \langle e,\Hat{q}^n[v](\Hat{x}) \rangle > 0\,,\\
	e_d & \text{otherwise}\,. 
	\end{cases}
	\end{equation*}
	Let $\Hat{\zeta}^n_T$ be
	the mean empirical measure defined by
	$$
	\Hat{\zeta}^n_T(A\times B) \;\df\; \frac{1}{T}\Exp^{Z^n}
	\left[\int_0^{T}\Ind_{A\times B}(\Hat{X}^n(s),u^n[v](\Hat{X}^n(s)))\,\D{s}\right] 
	$$
	for Borel sets $A \subset \RR^d$ and $B \subset \Act$,
	where  $\Hat{X}^n$ is the queuing process under  the  admissible scheduling policy
	$Z^n(t) = \Hat{z}^n[v](\Hat{X}^n(t))$.
	Let $\uppi_v\in\cP(\RR^d\times\Act)$ be the ergodic occupation measure of the controlled diffusion in \cref{ET2.1A} under the control $v$. 
	%{\color{magenta}]} {\color{magenta}A.D   $z^n[v]$ lives on the the unscaled world, it's domain is $\ZZ^d_+$, see \cref{D4.3} . You have really mixed up the scaled and un-scaled worlds in this proof.}
	Then $\Hat{\zeta}^n_T$ has a unique limit point $\uppi_v$ as $(n,T)\rightarrow\infty$.
\end{lemma}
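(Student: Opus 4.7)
The plan is to follow the auxiliary-process/averaging scheme of Lemma 4.2, upgraded with the fact that the proportions control $u^n[v](\hat{x})$ asymptotically coincides with the continuous precise control $v(\hat{x})$, and to close the argument by invoking uniqueness of the invariant measure for the limiting diffusion under $v$.

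\textbf{Step 1 (stability and tightness).} I would first observe that $\hat{z}^n[v]$ coincides with the static priority policy $\tilde{z}^n$ outside the (scaled-variable) region $\{\sup_{i\in\cI}|\hat{x}_i|\le\kappa n^{1-\beta}\}$, and on this region satisfies the one-step increment bound $|\hat{q}^n[v](\hat{x}\pm e_i)-\hat{q}^n[v](\hat{x})|\le C$ together with the affine-plus-error decomposition required by conditions (i)--(ii) in the proof of Lemma 3.3. Applying Lemma 3.3, the pair $(\hat{X}^n,J^n)$ is geometrically ergodic under $Z^n$, and
$$ \sup_n\limsup_{T\to\infty}\frac{1}{T}\Exp^{Z^n}\!\left[\int_0^T|\hat{X}^n(s)|^m\,\D s\right] \;<\; \infty\,, \qquad \forall\,m\ge 1\,, $$
from which tightness of $\{\hat{\zeta}^n_T\}$ in $\cP(\RR^d\times\Act)$ follows.

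\textbf{Step 2 (averaged limiting equation).} Next I would run the Kunita--Watanabe computation of Lemma 4.2 on the auxiliary semimartingale $\tilde{X}^n\df\hat{X}^n+B^n$. The combined martingale $\hat{L}^n+B^n$ drops out in expectation; fast switching (Proposition 3.2 in \cite{ABMT}) allows the replacement of $\Ind(J^n(s)=k)$ by $\pi_k$ in the limit; and the jump-correction term vanishes by the $O(n^{-\alpha/2})$ jump-size estimate combined with Step 1. For every $f\in\Cc^\infty_c(\RR^d)$, this yields
$$ \lim_{(n,T)\to\infty}\int_{\RR^d\times\Act}\!\Big(\sum_{k\in\cK}\pi_k\,\cA^n_k f(x,u)+\Ind(\alpha\le 1)\sum_{i,i'\in\cI}\theta_{ii'}\partial_{ii'}f(x)\Big)\hat{\zeta}^n_T(\D x,\D u)\;=\;0\,, $$
and since the integrand converges uniformly on compacts to $\cL_u f(x)$, every subsequential limit $\uppi^\star$ of $\hat{\zeta}^n_T$ satisfies $\int\cL_u f\,\D\uppi^\star=0$ for all $f\in\Cc^\infty_c(\RR^d)$.

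\textbf{Step 3 (concentration on the graph of $v$ and uniqueness).} The rounding bound $|\upomega(y)-y|\le 2d$ gives $\langle e,\hat{q}^n[v](\hat{x})\rangle = n^\beta\langle e,\hat{x}\rangle^+ + O(1)$, hence
$$ u^n[v](\hat{x})\;=\;\frac{n^\beta\langle e,\hat{x}\rangle^+ v(\hat{x})+O(1)}{n^\beta\langle e,\hat{x}\rangle^++O(1)}\;\xrightarrow[n\to\infty]{}\;v(\hat{x}) $$
uniformly on $\{\langle e,\hat{x}\rangle\ge\delta\}\cap\{\sup_i|\hat{x}_i|\le R\}$ for any fixed $\delta,R>0$. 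Since $\rc(x,u)$ vanishes on $\{\langle e,x\rangle\le 0\}$ and the drift $b(x,u)$ is independent of $u$ there, the contribution of the empty-queue region and of $\{\sup_i|\hat{x}_i|>R\}$ is negligible by Step 1. Thus $\uppi^\star = \nu^\star(\D x)\otimes\delta_{v(x)}(\D u)$ up to the harmless empty-queue ambiguity, and $\nu^\star$ is an invariant probability measure of $\hat{X}$ under $v$. Because $v$ agrees with $e_d$ on $B_R^c$, the Proposition of Section 3 together with the remark following it yield exponential ergodicity of $\hat{X}$ under $v$, hence uniqueness of $\nu^\star$; therefore $\nu^\star=\nu_\epsilon$ and $\uppi^\star=\uppi_v$. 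Uniqueness of the subsequential limit upgrades this to full-net convergence.

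\textbf{Main obstacle.} The delicate point is the localization across the switching boundary $\sup_i|\hat{x}_i|=\kappa n^{1-\beta}$ and near the empty-queue face $\langle e,\hat{x}\rangle=0$: one must verify the Foster--Lyapunov inequality of Lemma 3.3 uniformly in $n$ for this non-smooth hybrid policy, and show that the $u^n[v]\to v$ approximation is strong enough to survive composition with the generator after integration against $\hat{\zeta}^n_T$. Both difficulties are controlled by the uniform moment bound of Step 1, which makes the tails of $\hat{\zeta}^n_T$ outside any compact set and on thin layers uniformly negligible.
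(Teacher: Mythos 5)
Your proposal is correct and follows essentially the same route as the paper: tightness from the Foster--Lyapunov bound of \cref{L3.3}, the auxiliary process $\tilde{X}^n=\Hat{X}^n+B^n$ and the averaging machinery of \cref{L4.2} to show that every subsequential limit satisfies $\int \cL_u f\,\D\uppi=0$, the convergence $u^n[v]\to v$, and identification of the limit via ergodicity under $v$. The finishing step differs slightly in bookkeeping: the paper first sends $T\to\infty$ for fixed $n$ to get $\uppi^n=\nu^n(\D x)\,\delta_{u^n[v](x)}(\D u)$, proves $\nu^n\to\nu$ through uniform convergence of the generators, and then concludes $\uppi^n\to\uppi_v$ using convergence in the topology of Markov controls together with the continuous density of $\nu$ (citing \cite[Lemma 2.4.1]{ABG12}), whereas you take joint $(n,T)$ limits and close with concentration on the graph of $v$ plus uniqueness of the invariant measure; both arguments rest on the same ingredients. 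You are in fact more careful than the paper about the empty-queue face, where $u^n[v]\equiv e_d$ so that the uniform convergence claimed in \cref{PL4.6C} cannot hold on compact sets meeting $\{\langle e,x\rangle\le 0\}$ unless $v=e_d$ there; your observation that $b(x,u)$ and $\rc(x,u)$ do not depend on $u$ on that set is exactly what makes this discrepancy harmless (note, though, that your final assertion $\uppi^\star=\uppi_v$ inherits the same imprecision on $\{\langle e,x\rangle\le0\}$, which is immaterial for the use of the lemma in \cref{T2.4}).
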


\begin{proof}
Applying \cref{L3.3}, we obtain that $\Hat{\zeta}^n_T$ is tight.
Recall the definition of $\tilde{X}^n$ in \cref{PL4.2}. 
%Let $\mathfrak{V}^n$ denote the set formed by all possible values of $\Ind(\alpha\le 1)B^n$,
%and then $\tilde{X}^n$ takes values from the set 
%$$
%\breve{\fX}^n \;\df\; \{\breve{x}^n(x,y) \colon \breve{x}^n(x,y) = \Hat{x}^n(x) + y,\,x\in\ZZ^d_+,\, %y\in\mathfrak{V}^n\}\,.
%$$
%It is worth noting that
%\begin{equation}\label{PL4.6A}
%\sup_{y\in\mathfrak{V}^n}\{y\} \;\rightarrow\; 0 \quad \text{as} \quad n\rightarrow\infty\,.
%\end{equation}
Define the mean empirical measure 
$$
\breve{\zeta}^n_T(A\times B) \;\df\; \frac{1}{T}\Exp
\left[\int_0^{T}\Ind_{A\times B}(\tilde{X}^n(s),u^n[v](\tilde{X}^n(s)))\,\D{s}\right] 
$$
for Borel sets $A \subset \RR^d$ and $B \subset \Act$.
For any $f\in\Cc_c^{\infty}(\RR^d\times\Act)$, we have
$$
\frac{1}{T}\Exp
\left[\int_0^{T}f(\tilde{X}^n(s),u^n[v](\tilde{X}^n(s)))\,\D{s}\right] \;=\;
\int_{\RR^d\times\Act}^{}f(x,u)\,\breve{\zeta}^n_T(\D{x},\D{u})\,.
$$
By \cref{PL4.1E},
it is easy to see  $\breve{\zeta}_T^n$ is also tight, 
and $\Hat{\zeta}_T^n$ and $\breve{\zeta}_T^n$ have same limits as $(n,T)\rightarrow\infty$. %{\color{magenta}This last statement is straightforward to prove, define $\breve{\zeta}^n_T$ and $\breve{\fX}^n $, and make the conclusion.. }
Thus, to prove this lemma, it suffices to show that $\breve{\zeta}^n_T$ has the unique limit point $\uppi_v$ as $(n,T)\rightarrow\infty$. 
%{\color{magenta} Not true, you still have to prove  that $\uppi_v$ is the ergodic occupation measure of the %controlled diffusion in \cref{ET2.1A} under the control $v$, and why do we even need uniqueness?}
%\rd{$\uppi_v$ is defined in the statement of this lemma.}

Note that
\begin{equation}\label{PL4.6C}
\sup_{\Hat{x}\in \RR^d\cap D}\abs{u^n[v](\Hat{x}) - v(\Hat{x})} \;\rightarrow\; 0 \quad \text{as}\;n\rightarrow\infty
\end{equation}
for any compact set $D\subset\RR^d$.
Let $\uppi^n$ be any limit point of $\breve{\zeta}^n_T$ as $T\rightarrow\infty$.
We have
$$
\uppi^n(\D{x},\D{u}) \;=\; \nu^n(\D{x})\,\delta_{u^n[v](x)}(u)\,,\quad \text{where} \; \nu^n(A)\;=\;\lim_{T \to \infty}\frac{1}{T}\Exp
\left[\int_0^{T}\Ind_{A}(\tilde{X}^n(s))\,\D{s}\right]
$$
 for $A \subset \RR^d$.  By \cref{L4.2},
 $\nu^n$ exists for all $n$ and  $\{\nu^n\colon n\in\NN\}$  is tight.
We choose an increasing sequence $n\in\NN$ such that $\nu^n\rightarrow\nu$ in $\cP(\RR^d)$.
For each $n$, let $\tilde{\cA}^n$ be the operator defined by
$$
\tilde{\cA}^n f(x) \;=\; \sum_{k\in\cK}\cA^n_k\, f(x,u^n[v](x)) \pi_k +
\Ind(\alpha \le 1)\sum_{i,i'\in\cI}\theta_{ii'}\partial_{ii'}f(x)\,.
$$
Recall $\cL_v$ defined in \cref{def-Lu} for $v\in\Usm$.
Therefore, we have
\begin{equation}\label{PL4.6E}
\int_{\RR^d}^{}\tilde{\cA}^n f\,\D{\nu^n} - \int_{\RR^d}^{}\cL_v f\,\D{\nu}
\;=\; \int_{\RR^d}^{}\big(\tilde{\cA}^n f - \cL_v f\big)\,\D{\nu^n} +  \int_{\RR^d}^{}\cL_v f\,\big(\D{\nu^n} - \D{\nu}\big)\,.
\end{equation}
By \cref{PL4.6C} and the convergence of $\tilde{\cA}^n$ in \cref{PL4.2E}, 
we have $\tilde{\cA}^n f \rightarrow \cL_v f$ uniformly as $n\rightarrow\infty$;
%({\color{magenta} Recall what $\cL_v$ is.}), 
thus the first term  on the r.h.s.  of \cref{PL4.6E} converges to $0$.
By the convergence of $\nu^n$, the second term of \cref{PL4.6E} also converges to $0$.
Applying \cref{L4.2}, it holds that, for any $f\in\Cc_c^{\infty}(\RR^d\times\Act)$, 
$$
	\int_{\RR^d}^{}\tilde{\cA}^n f\,\D{\nu^n} \;\rightarrow\; 0 \quad \text{as}\quad n\rightarrow\infty\,.
$$
Therefore,
$$
 \int_{\RR^d}^{}\cL_v f\,\D{\nu} \;=\; 0\,, \quad \forall f\in\Cc_c^{\infty}(\RR^d\times\Act)\,,
$$
which implies that $\nu$ is the invariant measure of $\Hat{X}$ defined in \cref{HatX} under the control $v$.
By \cref{PL4.6C}, we obtain $\delta_{u^n[v](\cdot)}(u) \rightarrow \delta_{v(\cdot)}(u)$ in the topology of Markov controls. Define the ergodic occupation measure $\uppi_v\in\cP(\RR^d\times\Act)$ by $\uppi_{v}(\D{x},\D{u})\df \nu(\D{x})\delta_{v(x)}(u)$. 
Then, for $g\in\Cc^{\infty}_c(\RR^d\times\Act)$, we have
\begin{multline}\label{PL4.6F}
\left|\int_{\RR^d\times\Act}^{} g(x,u)\Big(\uppi_v(\D{x},\D{u}) - \uppi^n(\D{x},\D{u})\Big)\right| \;\le\;
\left|\int_{\Act}^{}\left(\int_{\RR^d}^{}g(x,u)(\nu(\D{x}) - \nu^n(\D{x}))\right)\delta_{u^n[v](x)}(u)\right| \\
+ \left|\int_{\Act}^{}\left(\int_{\RR^d}^{}g(x,u)\nu(\D{x})\right)\big(\delta_{u^n[v](x)}(u) - \delta_{v(x)}(u)\big)\right|
\,.
\end{multline}
By the convergence of $\nu^n$, the first term of \cref{PL4.6F} converges to $0$ as 
$n\rightarrow\infty$. Since $\nu$ has a continuous density, then applying \cite[Lemma 2.4.1]{ABG12}, we deduce that the second term of \cref{PL4.6F} converges to $0$ as 
$n\rightarrow\infty$. Thus, $\uppi^n \rightarrow \uppi_v$ in $\cP(\RR^d\times\Act)$.
This completes the proof.
\end{proof}

\medskip

\begin{proof}[Proof of \cref{T2.4}.] 
	Let $\tilde{m} = 2m$ with $m$ defined in \cref{E2.3A}.
	Let $Z^n$ be a scheduling policy such that
	$Z^n(t) = \Hat{z}^n[v_\epsilon](\Hat{X}^n(t))$ with $v_\epsilon$ (together with a positive constant $R(\epsilon)$) defined in \cref{T4.3} and $\Hat{z}^n$ defined in \cref{L4.6}.
	Note that
		\begin{equation*}%\label{PT2.4}
			\int_{\RR^d\times\Act}\rc(x,u)\,\uppi_{v_\epsilon}(\D{x},\D{u})
			\;\le\;\varrho_* + \epsilon \,,
		\end{equation*}
	where $\uppi_{v_\epsilon} \in \cP(\RR^d\times\Act)$ is the ergodic occupation measure defined by $\uppi_{v_\epsilon}(\D{x},\D{u}) \df \nu_{\epsilon}(\D{x})\delta_{v_\epsilon(x)}(u)$. 
	Let $z^n(x) = \Hat{z}^n[v_\epsilon](\Hat{x}^n(x))$ for $x\in\ZZ^d_+$, 
	and $c_0 \equiv R(\epsilon)$ in \cref{D3.2}.
	Then, by \cref{L3.3}, 
	%{\color{magenta}Incorrect refrence} 
	there exits $\Hat{n}_0 \in\NN$ such that
	\begin{equation}\label{PT2.4A}
			\widehat{\Lg}^{z^n}_n\widehat{\cV}_{\tilde{m},\xi}(\Hat{x},k) \;\le\; C_1 - C_2\widehat{\cV}_{\tilde{m},\xi}(\Hat{x},k)
			\quad \forall (\Hat{x},k)\in \fX^n\times\cK\,, \quad \forall n\ge \Hat{n}_0\,,
	\end{equation} 
	for some positive constants $C_1$ and $C_2$. 
	%and $\mathfrak{J}(\hat{X}^n(0), \Hat{U}^n)$ is finite for $n\ge\Hat{n}_0$ under the scheduling policy %$Z^n$.
	Using \cref{PT2.4A}, we can select a sequence of $\{T_n\colon n\in\NN\}$ such that
	$T_n\rightarrow\infty$ as $n\rightarrow\infty$, and 
\begin{equation*}
\sup_{n\ge \Hat{n}_0}\sup_{T\ge T_n}\int_{\RR^d\times\Act}\widehat{\cV}_{\tilde{m},\xi}(\Hat{x},k)
\,\Hat{\zeta}^n_T(\D{\Hat{x}},\D{u}) \;<\; \infty\,.
\end{equation*} 
It follows that $\widetilde{\rc}(x-\Hat{z}^n[v_\epsilon](x))$ is uniformly integrable.
Moreover, by \cref{L4.6}, $\Hat{\zeta}^n_T$ converges in distribution to $\uppi_{v_\epsilon}$.
This completes the proof.
%{\color{blue} what does it mean by '$\widetilde{\rc}$ is uniformly integrable'? isn't $\widetilde{\rc}$ %defined in (2.15)??}
%{\color{magenta}I would just write: so  $\widetilde{\rc}(x-\Hat{z}^n[v_\epsilon](x))$ is also ui. Further by \cref{L4.6} $\Hat{\zeta}^n_T$ converges in distribution to $\uppi_{v_\epsilon}$, hence the proof is complete. \newline No need to show that integrating $\widetilde{\rc}(x-\Hat{z}^n[v_\epsilon](x))$ against  $\Hat{\zeta}^n_T$ gives  $\mathfrak{J}(\hat{X}^n(0), \Hat{U}^n)$. I would not mention  $\mathfrak{J}(\hat{X}^n(0), \Hat{U}^n)$ at all. }
% for all $n\ge \Hat{n}_0$ and $T\ge T_n$.
%	Thus, by Birkhoff's ergodic theory and \cref{PT2.4A}, for each $n\ge \Hat{n}_0$ and any $r>0$, 
%	we can choose $T_n \ge 0$ and a ball $B(r)$ such that
%	\begin{equation}\label{PT2.4C}
%			\left|\int_{B_r\times\Act}\widetilde{\rc}\big(\langle e,\Hat{x}\rangle^+u\big)
%			\,\Hat{\zeta}^n_T(\D{\Hat{x}},\D{u})
%			- \mathfrak{J}(\Hat{X}^n(0), U^n)\right| \;\le\; \frac{1}{n} + r\,,
%%	\end{equation}
%	for all $T>T_n$.
%	Applying \cref{L4.6}, we have
%	\begin{equation}\label{PT2.4D}
%			\int_{B_r\times\Act}\widetilde{\rc}\big(\langle e,\Hat{x}\rangle^+u\big)
%			\,\Hat{\zeta}^n_{T_n}(\D{\Hat{x}},\D{u}) \;\longrightarrow\; 
%			\int_{B_r\times\Act}\rc(x,u)\,\uppi_{v_\epsilon}(\D{x},\D{u})
%	\end{equation}
%	as $n\rightarrow\infty$. Therefore, using \cref{PT2.4,PT2.4C,PT2.4D}, we have
%	$$
%	\limsup_{n\rightarrow\infty}\mathfrak{J}(\Hat{X}^n(0),Z^n)\;\le\; \varrho_* + \epsilon + r\,.
%	$$
%	Since $\epsilon$ and $r$ can be chosen arbitrarily small, we have shown the result.
\end{proof}

\appendix

\section{Proofs of \texorpdfstring{\cref{T2.1}}{} and \texorpdfstring{\cref{L3.1}}{}}\label{S5}

\begin{proof}[Proof of \cref{T2.1}]
To prove (i), we fix $\beta = \nicefrac{1}{2}$, and first show that $\Hat{X}^n$ is stochastically bounded (see Definition 5.4 in \cite{PTW07}).
Recall the definition of $\Hat{X}^n$ in \cref{HatX}.
By \cref{E2.7} and \cref{E2.8}, $\{\Hat{\ell}^n_i + \Hat{L}^n_i\colon n\in\NN\}$ is stochastically bounded in $(\DD,\cJ)$.
The predictable quadratic variation processes of $\Hat{S}^n_i$ and $\Hat{R}^n_i$ are defined by
$$\langle \Hat{S}^n_i \rangle (t) \;\df\;  \int_{0}^{t}\mu^n_i(J^n(s))\Bar{Z}_i^n(s)\,\D{s}\,,
\quad  \langle \Hat{R}^n_i \rangle (t) \;\df\;  \int_{0}^{t}\gamma^n_i(J^n(s))\Bar{Q}_i^n(s)\,\D{s}\,, $$ 
respectively. By \cref{E2.2}, we have the crude inequality
$$ 0 \;\le\; \Bar{X}_i^n(t) \;\le\; \Bar{X}_i^n(0) + n^{-1}A^n_i(t)  \,,$$
and thus, by \cref{con-XQZ},
the analogous inequalities hold for $\Bar{Z}_i^n$ and $\Bar{Q}_i^n$. 
Thus, applying Lemma 5.8 in \cite{PTW07} together with \cref{E2.8}, we deduce
that $\{(\Hat{S}^n_i, \Hat{R}^n_i)\colon n\in\NN\}$ is stochastically bounded in $(\DD,\cJ)^2$, and thus $\{\Hat{W}_i^n\colon n\in\NN\}$ is stochastically bounded.  
For each $u\in\Act$, the  map
$$ x \; \mapsto \;c_1\big(x - \langle e,x \rangle^+ u \big) + c_2\langle e,x \rangle^+ u $$
has the Lipschitz property, where $c_1$ and $c_2$ are some positive constants. 
Then, by \cref{A2.1}, we obtain
$$ \norm{\Hat{X}^n(t)} \;\le\; \norm{\Hat{X}^n(0)}+ \norm{\Hat{W}^n(t)} + C\int_{0}^t\big( 1+ \norm{\Hat{X}^n(s)}\big)\,\D{s} $$
for $t \ge 0$ and some constant $C$. 
Therefore, applying Gronwall's inequality, and using the assumption on $\Hat{X}^n(0)$ and Lemma 5.3 in \cite{PTW07}, it follows
that $\{\Hat{X}^n\colon n\in\NN\}$ is stochastically bounded in $(\DD^d,\cJ)$. 
Then, applying the functional weak law of large numbers (Lemma 5.9 in \cite{PTW07}),
we have  
$$ \frac{\Hat{X}^n}{\sqrt{n}} \;=\; \Bar{X}^n -\rho \;\Rightarrow\; {0} \quad \text{in} \quad  (\DD^d,\cJ) \qasq n \to\infty\,,$$
for $t\ge0$. This implies $ \Bar{X}^n \Rightarrow \rho$ in $(\DD^d, \cJ)$ as $n \to\infty$. By \cref{con-XQZ} and \cref{A2.2}, 
we have $\langle e, \bar{Q}^n \rangle = (\langle e, \bar{X}^n \rangle - 1)^+ \Rightarrow 0$ in $(\DD, \cJ)$ as $n\to\infty$.
Since $\Bar{Q}^n \ge 0$, it follows that
$\bar{Q}^n \Rightarrow 0$ and $\bar{Z}^n \Rightarrow \rho$, both in $(\DD^d, \cJ)$ as $n \to\infty$.

We next prove (ii). For $i\in\cI$ and $t\ge0$, $\Hat{A}_i^n$ can be written as 
$$ \Hat{A}^n_i(t) \;=\; 
n^{-\beta}\left(A_{*,i}\Biggl(n \sum_{k\in\cK}\frac{\lambda_i^n(k)}{n}\int_{0}^{t}\Ind(J^n(s)=k) \,\D{s} \Biggr) 
- n\sum_{k\in\cK}\frac{\lambda_i^n(k)}{n}\int_{0}^{t}\Ind(J^n(s)=k) \,\D{s} \right)\,. $$
By \cite[Theorem 5.1]{JMTW17} and \cref{A2.1}, we have
$$  \sum_{k\in\cK}\frac{\lambda_i^n(k)}{n}\int_{0}^{\cdot}\Ind(J^n(s)=k) \,\D{s} 
\;\xrightarrow[]{\text{u.c.p.}}\;
\lambda^{\pi}_i\mathfrak{e}(\cdot)\,, \quad \text{as}\quad n\rightarrow\infty\,,
$$
for $i\in\cI$,  $\xrightarrow{\text{u.c.p.}}$ denotes uniform convergence
on compact sets in probability, and $\mathfrak{e}(t) \df t$ for all $t\ge0$.
Thus, by the FCLT of Poisson martingales and a random change of time 
(see, for example, \cite[Page 151]{Patrick-99}), we have
$$ \Hat{A}^n \;\Rightarrow\; \Ind(\alpha\ge1)\frac{\Lambda}{\sqrt{2}}W_1  \quad \text{in} \quad (\DD^d, \cJ)\, \qasq n \to\infty\,,$$
where $W_1$ is a $d$-dimensional standard Brownian motion.  
Similarly, applying \cref{T2.1} (i), \cite[Theorem 5.1]{JMTW17} and \cref{A2.1},  
\begin{multline*} 
\sum_{k\in\cK}\mu^n_i(k)\int_{0}^{\cdot}\Bar{Z}^n(s)\Ind(J^n(s)=k)\,\D{s} \;=\;
\sum_{k\in\cK}\mu^n_i(k)\int_{0}^{\cdot}(\Bar{Z}^n(s) - \rho_i) \Ind(J^n(s)=k)\,\D{s} \\ + 
\sum_{k\in\cK}\mu^n_i(k)\rho_i\int_{0}^{\cdot} \Ind(J^n(s)=k)\,\D{s} 
\;\xrightarrow{\text{u.c.p.}}\; \lambda_i^\pi\mathfrak{e}(\cdot) \,, \qasq n \to\infty\,,
\end{multline*}
and
$$  \sum_{k\in\cK}\gamma^n_i(k)\int_{0}^{\cdot}\Bar{Q}^n(s)\Ind(J^n(s)=k)\,\D{s} 
\;\xrightarrow{\text{u.c.p.}}\; 0\,, \qasq n \to\infty\,,$$
for $i\in\cI$ and $t\ge0$. Thus, we  obtain 
$$
\Hat{S}^n \;\Rightarrow\; \Ind(\alpha\ge1)\frac{\Lambda}{\sqrt{2}} W_2\quad 
\text{in} \quad (\DD^d, \cJ)\, \qasq n \to\infty\,,
$$
with a $d$-dimensional standard Brownian motion $W_2$,
and 
$$
\Hat{R}^n \;\Rightarrow\; 0 \quad \text{in} \quad (\DD^d, \cJ)\, \qasq n \to\infty\,.
$$
Since the Poisson processes are independent and the random time changes converge to deterministic functions,
the joint weak convergence of $(\Hat{L}^n, \Hat{A}^n, \Hat{S}^n,\Hat{R}^n)$ holds. 
Note that $\widetilde{W}$, $W_1$ and $W_2$ are independent, and thus
\begin{equation*}
\Hat{W}^n \;\Rightarrow\; \Hat{W} \quad \text{in} \quad (\DD^d, \cJ) \qasq n \to \infty\,.
\end{equation*}
This completes the proof of (ii).

It is easy to see that $\Hat{\ell}^n_i$, $\mu_i^n(k)$ and $\gamma_i^n(k)$ 
are uniformly bounded in $i$, $k$ and $n$.
The rest of the proof of (iii) is same as \cite[Lemma 4(iii)]{AMR04}.

Finally, we prove (iv).
Note that $\Hat{U}^n$ may not have a limit in the space $\DD^d$. 
So to establish the weak limit, we need to assume $\Hat{U}^n$ is tight in $\DD^d$. 
By the representation of $\breve{X}^n$ in \cref{breveX} together with \cref{T2.1} (ii)
, and the continuity of the integral representation 
(see \cite[Theorem 4.1]{PTW07} for one-dimension and \cite[Lemma 4.1]{JMTW17} in the multi-dimensional case), 
any limit of $\breve{X}^n$ is a unique strong solution of \cref{ET2.1A}.
Applying \cref{LL4.1}, we deduce that the limit $\Hat{X}$ of $\Hat{X}^n$ is also a strong solution of \cref{ET2.1A}. 

Recall that $\tau^n(t)$ is defined in \cref{D2.1}. For $r\ge0$, we observe that 
\begin{align*}
\Hat{W}^n_i(t+r)  - \Hat{W}^n_i(t) \;=&\;  \Hat{W}^n_i(\tau^n(t) + r) - \Hat{W}^n_i(\tau^n(t))  \\
&\;+ \Hat{W}^n_i(t+r) -  \Hat{W}^n_i(\tau^n(t) + r) + \Hat{W}^n_i(t) - \Hat{W}^n_i(\tau^n(t))\,.
\end{align*}
It is easy to see that as $n\rightarrow\infty$, $\tau^n(t) \Rightarrow t$. By the random change of time lemma in \cite[Page 151]{Patrick-99}, we have 
$$ \Hat{W}^n_i(t+r) -  \Hat{W}^n_i(\tau^n(t) + r) + \Hat{W}^n_i(t) - \Hat{W}^n_i(\tau^n(t)) \;\Rightarrow\; 0 \quad \text{in} \quad \RR\,,$$
and thus  
$$  \Hat{W}^n_i(\tau^n(t) + r) - \Hat{W}^n_i(\tau^n(t)) \;\Rightarrow\; \Hat{W}_i(t+r) - \Hat{W}_i(t) \quad \text{in} \quad \RR \,.$$
Thus, by \cref{D2.1}, and following the proof of Lemma 6 in \cite{AMR04}, 
we deduce that $\Hat{U}^n$ is non-anticipative. 
\end{proof}

\medskip

\begin{proof}[Proof of \cref{L3.1}]
Note that 
$$(a \pm 1)^m - a^m \;=\; \pm ma^{m-1} + \order(a^{m-2})\,, \quad a\in\RR \,.$$
Recall the definition of $\tilde{x}^n$ in \cref{D3.2}. We obtain
\begin{multline*}
\Bar{\Lg}^{\tilde{z}_n}_nf_n(x) \;=\; \sum_{i \in \cI}\xi_i
\Big(\Bar{\lambda}_i^n(m\tilde{x}^n_i\abs{\tilde{x}^n_i}^{m-2} + \order(\abs{\tilde{x}^n_i}^{m-2})) \\
+ \Bar{\mu}_i^n\tilde{z}^n_i(-m\tilde{x}^n_i\abs{\tilde{x}^n_i}^{m-2} + \order(\abs{\tilde{x}^n_i}^{m-2}))
+ \Bar{\gamma}_i^n\tilde{q}^n_i(-m\tilde{x}^n_i\abs{\tilde{x}^n_i}^{m-2} + \order(\abs{\tilde{x}^n_i}^{m-2}))  
\Big)\,.
\end{multline*}
Let
\begin{equation*}
\bar{F}^{(1)}_n(x) \;\df\;  \sum_{i \in \cI}\xi_i\Big(\Bar{\lambda}_i^n +
\Bar{\mu}_i^n \tilde{z}^n_i + \Bar{\gamma}_i^n \tilde{q}^n_i \Big)\order(\abs{\tilde{x}^n_i}^{m-2})\,,
\end{equation*}
and
\begin{equation*}
\bar{F}^{(2)}_n(x) \;\df\; \sum_{i \in \cI}\xi_i
\Big(\Bar{\lambda}_i^n -\Bar{\mu}_i^n\tilde{z}^n_i
- \Bar{\gamma}_i^n \tilde{q}^n_i\Big)m\tilde{x}^n_i\abs{\tilde{x}^n_i}^{m-2}\,.
\end{equation*}
It is easy to see that
$$
\Bar{\Lg}^{\tilde{z}_n}_nf_n(x) \;=\; \Bar{F}^{(1)}_n(x) + \bar{F}^{(2)}_n(x)\,.
$$
From \cref{D3.1,A2.1}, we have
\begin{align}\label{PL3.1A}
\begin{aligned}
\Bar{F}^{(1)}_n(x) &\;\le\; \sum_{i \in \cI}\xi_i \Big(\Bar{\lambda}^n_i+ \Bar{\mu}^n_i x_i + 
\bar{\gamma}^n_i x_i \Big)\order(\abs{\tilde{x}^n_i}^{m-2}) \\
& \;=\; \sum_{i \in \cI}\xi_i\Big(\Bar{\lambda}^n_i + \Bar{\mu}^n_i(\tilde{x}^n_i +  n\rho_i) + 
\Bar{\gamma}^n_i(\tilde{x}^n_i +  n\rho_i)\Big)\order(\abs{\tilde{x}^n_i}^{m-2}) \\
&\;\le\; \sum_{i \in \cI}\Big(\order(n)\order(\abs{\tilde{x}^n_i}^{m-2}) + \order(\abs{\tilde{x}^n_i}^{m-1})\Big) \,.
\end{aligned}
\end{align}
Next, we consider $\Bar{F}^{(2)}_n(x)$. 
By using the balance equation $\tilde{z}^n_i = \tilde{x}^n_i - \tilde{q}^n_i + \rho^n_in$, 
we obtain
\begin{align*}
\Bar{F}^{(2)}_n(x) \;=\; \sum_{i \in \cI}\xi_i
\Big(-\Bar{\mu}_i^n\tilde{x}^n_i + \Bar{\lambda}^n_i - \Bar{\mu}^n_i\rho_in - 
(\Bar{\gamma}_i^n - \Bar{\mu}_i^n)\tilde{q}^n_i \Big)
m\tilde{x}^n_i\abs{\tilde{x}^n_i}^{m-2}\,.
\end{align*}
By \cref{A2.1}, we have
\begin{equation}\label{PL3.1E}
\Bar{\lambda}^n_i - \Bar{\mu}^n_i\rho_in = \order(n^{\beta})\,.
\end{equation}
 Let  $\Breve{c}_1 \df \sup_{i,k,n}\abs{\gamma^n_i(k) - \mu^n_i(k)}$, and $\Breve{c}_2$ be some positive constant such that
\begin{equation*}
\inf_{i\in\cI,k\in\cI,n\in\NN}\;\bigl\{\mu^n_i(k),\gamma^n_i(k)\bigr\} \;\ge\; \Breve{c}_2 \;>\; 0\,.
\end{equation*}
We choose
\begin{equation*}%\label{xi}
\xi_1 \;=\; 1\,, \quad \text{and} \quad \xi_i \;=\; \frac{\epsilon_1^m}{d^m} \min_{i'\le i-1}\xi_{i'}\, \quad \text{for}\;i\ge 2\,,
\end{equation*}
where $\epsilon_1 \df \frac{\Breve{c}_1}{8\Breve{c}_2}$\,.
Then, by using \cref{PL3.1E,PL3.1D}, we obtain
\begin{align}\label{PL3.1B}
\begin{aligned}
\Bar{F}^{(2)}_{n}(x) \;\le\; &\sum_{i\in\cI}^{}\big[
- m\xi_i\big((1-\eta_i(x))\Bar{\mu}_i^n + \eta_i(x)\Bar{\gamma}^n_i\big)\abs{\tilde{x}^n_i}^m \\
&\quad + \xi_i\Big(\order(n^{\beta}) - (\Bar{\gamma}^n_i - \Bar{\mu}^n_i)\Bar{\eta}_i(x)\sum_{j=1}^{i-1}\tilde{x}^n_j\Big)m\tilde{x}^n_i\abs{\tilde{x}^n_i}^{m-2}\big]\\
\;\le\;& \sum_{i\in\cI} \xi_i\order(n^{\beta})(\tilde{x}^n_i)^{m-1}
-\frac{3m\Breve{c}_2}{4}\xi_i\abs{\tilde{x}^n_i}^{m} \,,
\end{aligned}
\end{align}
where the proof for the second inequality of \cref{PL3.1B} is same as the proof for the claim (5.12) in \cite{ABP15}.  
Using Young's inequality and since $\beta \ge \nicefrac{1}{2}$, we have 
\begin{align}\label{PL3.1C}
\begin{aligned}
\order(n)\order(\abs{\tilde{x}^n_i}^{m-2}) &\;\le\; \epsilon\big(\order(\abs{\tilde{x}^n_i}^{m-2})\big)^{\nicefrac{m}{m-2}} 
+ \epsilon^{1-\nicefrac{m}{2}}\big(\order(n)\big)^{m\beta} \\
\order(n^{\beta})\order(\abs{\tilde{x}^n_i}^{m-1}) &\;\le\;
\epsilon\big(\order(\abs{\tilde{x}^n_i}^{m-1})\big)^{\nicefrac{m}{m-1}} 
+ \epsilon^{1-m}\big(\order(n^\beta)\big)^{m}
\end{aligned}
\end{align}
for any $\epsilon > 0$. Therefore, by \cref{PL3.1A,PL3.1B,PL3.1C}, we have 
\begin{equation*}
\Bar{\cL}^{\tilde{z}^n}_nf_n(x) \;\le\; C_1n^{m\beta} - C_2f_n(x)\,,\quad \forall x\in\ZZ^d_+\,.
\end{equation*}
This completes the proof.	
\end{proof}

\section{Proofs of \texorpdfstring{\cref{LL4.1}}{}, \texorpdfstring{\cref{semi-martingale}}{} and \texorpdfstring{\cref{L4.1}}{}}\label{S6}

\begin{proof}[Proof of \cref{LL4.1}]
	For $i\in\cI$ and $t\ge 0$, we have
	\begin{multline}\label{PL2.1A}
	\Hat{X}^n_i(t) - \breve{X}^{n}_i(t) \;=\; 
	- \int_0^t(\mu^n_i(J^n(s)) - \Bar{\mu}^{n}_i)\Hat{X}^n_i(s)\,\D{s} + \int_0^t \Bar{\mu}^{n}_i(\breve{X}^n_i(s) - \Hat{X}^n_i(s))\,\D{s}
	\\
	+ \int_0^t(\mu^n_i(J^n(s)) - \Bar{\mu}^{n}_i - \gamma^n_i(J^n(s))  + \Bar{\gamma}^{n}_i)\langle e,\Hat{X}^n(s) \rangle^+
	\Hat{U}_i^n(s)\,\D{s} \\
	- (\Bar{\mu}^{n}_i - \Bar{\gamma}^{n}_i) \int_0^t (\langle e,\breve{X}^n(s) \rangle^+  - \langle e,\Hat{X}^n(s) \rangle^+)\Hat{U}_i^n(s)\,\D{s}\,.
	\end{multline}
	For any $a,b\in\RR$, $a^+ - b^+ = \eta(a - b)$ with $\eta\in [0,1]$. 
	Then, the last term of \cref{PL2.1A} can be written as 
	\begin{equation*}
	\int_0^t \bigl(\langle e,\breve{X}^n(s) \rangle^+  - \langle e,\Hat{X}^n(s) \rangle^+\bigr)\,\Hat{U}_i^n(s)\,\D{s} \;=\; 
	\int_0^t 
	\tilde{\eta}(\breve{X}^n(s),\Hat{X}^n(s)) \langle e,\breve{X}^n(s) -\Hat{X}^n(s)\rangle\Hat{U}_i^n(s)\,\D{s} \,,
	\end{equation*}
	where $\tilde{\eta}(x,y) \colon (x,y)\in\RR^2 \mapsto [0,1]$\,. 
	Note that $ \Hat{U}^n_i(t) \in [0,1]$ for all $i\in\cI$ and $t\ge0$. 
	By the continuous integral mapping (\cite[Lemma 5.2]{PW09}), if the first and third terms of \cref{PL2.1A} converge to the zero process uniformly on compact sets
	in probability, 
	then $\Hat{X}^n - \breve{X}^n$ must converge to the zero process uniformly on compact sets in probability. 
	The first term of \cref{PL2.1A} can be written as 
	$$
	-\sum_{k\in\cK}\mu_i^n(k)\int_{0}^{t}n^{-\nicefrac{\alpha}{2}}\Hat{X}^n_i(s)\,\D \bigg( 
	n^{\nicefrac{\alpha}{2}}\int_0^s(\Ind(J^n(u) = k) - \pi_k)\D{u}\bigg)\,.
	$$
	Note that $1 - \beta - \alpha = \min\{0, (1-\alpha)/2\}$.
	Applying \cref{T2.1} (i), we have $n^{-\nicefrac{\alpha}{2}}\Hat{X}^n_i$ converges to the zero process uniformly on compact sets in probability.
	Similarly, since $\Hat{U}^n_i(s)$ is bounded, by \cref{T2.1} (i), 
	we  obtain that
	$n^{-\nicefrac{\alpha}{2}} \langle e,\Hat{X}^n(s) \rangle^+)\Hat{U}_i^n(s)$
	converges to the zero process uniformly on compact sets in probability.
	Then,  
	the asymptotic equivalence of $\breve{X}^n$ and $\Hat{X}^n$
	follows as in the proof in \cite[Lemma 4.4]{JMTW17}.
\end{proof}

\medskip

\begin{proof}[Proof of \cref{semi-martingale}]
	For $n\in\NN$ and $i\in\cI$, define the processes $M_{S_i} = \{M_{S^n_i}(t)\colon t\ge0 \}$ and $M_{R_i} = \{M_{R_i}(t)\colon t\ge0 \}$ by 
	$$M_{S_i}(t)\df S_{*,i}(t) - t\,, \quad \text{and} \quad 
	M_{R_i} \df R_{*,i}(t) - t\,,$$ respectively. It is obvious that $M_{S_i}$ and $M_{R_i}$
	are square integrable martingales with respect to the filtration generated by the processes $S_{*,i}$ and $R_{*,i}$. 
	Define the $d$--dimensional processes $\uptau^n_1$ and $\uptau^n_2$ by 
	\begin{align*}
	\uptau^n_{1,i}(t) \;\df\; \int_{0}^t\mu^n_i(J^n(s))Z^n_i(s) \D{s} \,, \quad \text{and} \quad
	\uptau^n_{2,i}(t) \;\df\; \int_{0}^t\gamma^n_i(J^n(s))Q^n_i(s) \D{s}\,,
	\end{align*}
	respectively.
	It is easy to see that $\{\uptau^n_{j,i}\colon i\in\cI, j\in\{1,2\}\}$ have continuous nondecreasing nonnegative sample paths.
	For $x_1\in\RR^d_+$ and $x_2\in\RR^d_+$, we  obtain 
	$$
	(\uptau^n_1(t)\le x_1, \uptau^n_2(t) \le x_2) \;\in\; \mathcal{H}^n(x_1,x_2)\,, 
	$$
	where 
	\begin{multline*}
	\mathcal{H}^n(x_1,x_2) \;\df\; \{S^n_i(s_{1,i}),R^n_i(s_{2,i}),X_i^n(0) \colon i\in\cI,s_1\le x_1, s_2\le x_2 \}\\\vee\sigma\{A^n_i(s),J^n(s),Z^n_i(s) \colon s\ge 0, i\in\cI\}\vee\mathcal{N}.
	\end{multline*}
	This implies that $(\uptau^n_1(t),\uptau^n_2(t))$ is $\mathbb{H}^n$--stopping time, where 
	$\mathbb{H}^n \df \{\mathcal{H}^n(x_1,x_2)\colon x_1\in\RR^d_+,x_2\in\RR^d_+\}$.
	Since $X^n_i(t) \le X^n_i(0) + A^n_i(t)$ for $i\in\cI$, we observe that
	\begin{align*}
	\Exp\left[ \uptau^n_{1,i}(t)\right] &\;\le\; \max_k\{\mu_i(k) \}t (X_i^n(0) + \Exp[A^n_i(t)] + n) \;<\; \infty\,, \\
	\Exp\left[S_{*,i}(\uptau^n_{1,i}(t))\right] &\;\le\; \max_k\{\mu_i(k) \}t (X_i^n(0) + \Exp[A^n_i(t)] + n) \;<\; \infty\,.
	\end{align*}
	Similarly, we have that
	$\Exp[\uptau^n_{2,i}(t)]$ and $\Exp[R_{*,i}(\uptau^n_{2,i}(t))]$ are finite.
	Thus, applying Lemma 3.2 in \cite{PTW07} and Theorem 8.7 on page 87 of \cite{Kurtz86}, and using the decomposition in \cref{HatX} and Lemma 3.1 in \cite{JMTW17},
	we  conclude that $\Hat{X}^n$ is a semi--martingale with respect to the filtration $\widetilde{\mathbb{F}}^n \df \{\widetilde{\mathcal{F}}^n_t\colon t\ge0\}$, where
	\begin{equation*}
	\widetilde{\mathcal{F}}^n_t \;\df\; \sigma\{S^n_i(s),R^n_i(s), X_i^n(0)  \colon i\in\cI,s\le t  \}
	\vee\sigma\{A^n_i(s),J^n(s),Z^n_i(s) \colon i\in\cI, s\ge 0\}\vee\mathcal{N}\,,
	\end{equation*}
	and $\mathcal{N}$ is a collection of $\Prob$--null sets. 
	Since the processes $A^n(t)$, $J^n(t)$ and $Z^n(t)$ are adapted to 
	$\mathcal{F}^n_t$, we can replace $\widetilde{\mathbb{F}}^n$ by the smaller filtration $\mathbb{F}^n$.
	This completes the proof.
\end{proof}

\medskip

\begin{proof}[Proof of \cref{L4.1}]
	Define the function $g\in\Cc^2(\RR^d)$ by $g(x) \df \sum_{i\in\cI}g_i(x_i)$ 
	with $g_i \in \Cc^2(\RR)$ defined by $g_i(x)= \abs{x}^m$ for $\abs{x}\ge 1$ and $i\in\cI$.
%	We first show that
%	\begin{equation}\label{PL4.4}
%	\Exp\left[\int_{0}^{t}\sum_{i\in\cI}\partial_ig(\Hat{X}^n(s))\,\D \Hat{L}^n_i(s)\right] 
%	\;=\; \sorder(1)\,.
%	\end{equation} 
%	Recall the definition of $B^n$ in \cref{ES4.2A} and $\Hat{L}^n$ in \cref{defn_of_ell_L_etc}. 
%	Using Lemma 3.1 in \cite{ABMT}, we have
%	$\Hat{L}^n_i(t) +  B^n_i(t)$ is a martingale.
%	Thus, we  obtain
%	\begin{multline*}
%	\Exp\left[\int_{0}^{t}\sum_{i\in\cI}\partial_ig(\Hat{X}^n(s))\,\D \Hat{L}^n_i(s)\right]\\
%	 \;=\;
%	\Exp\left[\int_{0}^{t}\sum_{i\in\cI}\partial_ig(\Hat{X}^n(s))\,\D \big(\Hat{L}^n_i(s) + B^n_i(s)\big) %\right] - \Exp\left[\int_{0}^{t}\sum_{i\in\cI}\partial_ig(\Hat{X}^n(s))\,\D  B^n_i(s) \right]
%	\;=\; \sorder(1)\,,
%	\end{multline*}
%	where the last equation 
%	By \cref{semi-martingale}, $\Hat{X}^n$ is a semi-martingale.
    Recall $\cA^n_k$ defined in \cref{D4.1}.
	Applying the Kunita--Watanable formula to $\tilde{X}^n$ with $\Exp = \Exp^{\Hat{U}^n}$ and the fact
	$\Hat{L}_i^n + B_i^n$ is a martingale, we have 
	\begin{multline}\label{PL4.4A}
	\Exp\big[g(\tilde{X}^n(t))\big] \;= \;  \Exp\big[g(\tilde{X}^n(0))\big] 
	+ \sum_{k\in\cK}\Exp\left[\int_{0}^{t}\cA^n_k\, g(\tilde{X}^n(s),\Hat{U}^n(s))\Ind(J^n(s)=k)\,\D{s}\right]
	\\+ \Exp\left[\sum_{i,i'\in\cI}\int_{0}^{t} \partial_{ii'} g(\tilde{X}^n(s))\,\D\,[B^n_i,B^n_{i'}](s) \right]
	+ \Exp\Biggl[\sum_{s\le t} \cD g(\tilde{X}^n,s)\Biggr]
	\end{multline}
	for $t\ge 0$, where $\cD g(\tilde{X}^n,s)$ is defined analogously to \cref{PL4.2A}.
	%\begin{equation*}
	%\mathcal{D}g(\Hat{X}^n,s) \;\df\; \Delta g(\Hat{X}^n(s)) - %\sum_{i\in\cI}\partial_ig(\Hat{X}^n_i(s-))\Delta\Hat{X}^n_i(s) - %\frac{1}{2}\sum_{i,i'\in\cI}\partial_{ii'}g(\Hat{X}^n(s-))\Delta \Hat{X}^n_i(s)\Delta \Hat{X}^n_{i'}(s)
	%\end{equation*}
	%for $s\ge0$. 
	By \cref{A2.1} and Young's inequality, we have 
	\begin{align*}
	b^n_i(x,u,k)g^{\prime}_i(x) &\;\le\; \frac{\bar{c}_{1}}{d}(1+(\langle e,x \rangle^+)^m)- \frac{\Bar{c}_{2}}{d} \abs{x}^m \,,\\
	\sigma^n_i(x,u,k)g^{\prime\prime}_i(x) &\;\le\;\frac{2\bar{c}_{1}}{d}(1+(\langle e,x \rangle^+)^m) + \frac{\Bar{c}_{2}}{4d} \abs{x}^m\,,
	\end{align*}
	and thus, for all $k\in\cK$, we obtain
	\begin{equation}\label{PL4.4B}
	\cA^n_k\,g(x,u)\;\le\; 2\Bar{c}_1(1+(\langle e,x \rangle^+)^m) - \frac{7}{8}\Bar{c}_2 \abs{x}^m\,,
	\end{equation}
	where $\Bar{c}_1$ and $\Bar{c}_2$ are some positive constants independent of $n$.
	Following the same analysis for the fourth term on the r.h.s. of \cref{PL4.2A}, 
	and using Young's inequality,
	we have 
	\begin{equation}\label{PL4.4C}
	\Exp\left[\sum_{i,i'\in\cI}\int_{0}^{t} \partial_{ii'} g(\tilde{X}^n(s))\,\D\,[B^n_i,B^n_{i'}](s) \right]
	\,\le\, \Exp\left[\int_{0}^{t}\bar{c}_1\bigl(1 + (\langle e,\tilde{X}^n(s) \rangle^+)^m\bigr) 
	+ \frac{\bar{c}_2}{8}\abs{\tilde{X}^n(s)}^m\,\D{s} \right]\,.
	\end{equation}
	Since the jump size is of order $n^{-\beta}$ or $n^{-\nicefrac{\alpha}{2} + \delta_0}$, 
	we can find a positive constant $\Bar{c}_3$ such that
	$$\sup_{\abs{x_i-x^{\prime}_i}\le1}\;\abs{g^{\prime\prime}_i(x^{\prime}_i)}
	\,\le\, \Bar{c}_3(1+\abs{x_i}^{m-2})$$
	for each $x_i\in\RR$. Then, applying the Taylor remainder theorem, we obtain
	$$
	\Delta g_i(\tilde{X}_i^n(s)) - g^{\prime}_i(\tilde{X}_i^n(s))\Delta\tilde{X}_i^n(s) \;\le\;
	\frac{1}{2}\sup_{\abs{x^{\prime}_i -\tilde{X}^n_i(s-)}\le1}\abs{g^{\prime\prime}_i(x^{\prime}_i)}(\Delta\tilde{X}_i^n(s))^2\,,
	$$
	for each $i\in\cI$.
	Following a similar analysis as in \cref{PL4.2D}, and using Young's inequality, we obtain  
	\begin{align}\label{PL4.4D}
	\Exp\Biggl[\sum_{s\le t} \cD g_i(\tilde{X}^n,s)\Biggr] 
	\;&\le\; \Exp\Biggl[\sum_{s\le t}\Bar{c}_3\Big(1 + \abs{\tilde{X}^n_i(s-)}^{m-2}\Big)(\Delta \Hat{X}^n_i(s))^2 \Biggr] \nonumber\\
	%\;&=\; \Bar{c}_3 \Exp\bigg[\int_{0}^{t}
	%\Big(1 + \abs{\Hat{X}^n_i(s-)}^{m-2}\Big) \nonumber\\ 
	% &\quad\quad\quad\quad\quad\quad\quad\quad\quad\quad
	%\bigg(\frac{\lambda_i^n(J^n(s))}{n^{2\beta}} +
	%\frac{\mu_i^n(J^n(s))Z^n_i(s)}{n^{2\beta}} + \frac{\gamma_i^n(J^n(s))Q^n_i(s)}{n^{2\beta}}\bigg)\,\D{s} %\bigg] \nonumber\\
	\;&\le\; \Exp\bigg[ \int_{0}^{t} \Big(\Bar{c}_4 + \Bar{c}_5(\langle e,\tilde{X}^n(s) \rangle^+)^m + \frac{\Bar{c}_2}{2}\abs{\tilde{X}^n(s)}^m\Big)\,\D{s} \bigg]
	\end{align}
	for some positive constants $\Bar{c}_4$ and $\Bar{c}_5$. 
	Thus, by \cref{PL4.4A,PL4.4B,PL4.4C,PL4.4D}, we obtain
	\begin{equation}\label{PL4.4E}
	\Exp\left[\int_{0}^{t}\abs{\tilde{X}^n(s)}^m\,\D{s}\right] \;\le\; \Bar{c}_6\Exp\big[g(\tilde{X}^n(0))\big] 
	+ \Bar{c}_7 t 
	+ \Bar{c}_8 \Exp\left[\int_{0}^{t}(\langle e,\tilde{X}^n(s) \rangle^+)^m \,\D{s}\right]
	\end{equation}
	for some positive constants $\Bar{c}_i$, $i\in\{6,7,8\}$. 
	Using \cref{PL4.1E}, we see that \cref{PL4.4E} also holds if we replace $\tilde{X}^n$
	with $\Hat{X}^n$. 
	Therefore, under any sequence satisfying $\sup_n \mathfrak{J}^n(\Hat{X}^n(0),\Hat{U})<\infty$,  
	we have established \cref{EL4.1A}. This completes the proof.
\end{proof}

\section*{Acknowledgement}
This work is  supported in part by an Army Research Office grant W911NF-17-1-0019,
in part by NSF grants DMS-1715210, CMMI-1635410, and DMS/CMMI-1715875,
and in part by Office of Naval Research through grant N00014-16-1-2956.

\end{document}